\definecolor{mycolor1}{rgb}{0.00000,0.44700,0.74100}
\definecolor{mycolor2}{rgb}{0.85000,0.32500,0.09800}%
\newcolumntype{R}[2]{%
    >{\adjustbox{angle=#1,lap=\width-(#2)}\bgroup}%
    l%
    <{\egroup}%
}
\newcommand{\xdashrightarrow}[2][]{\ext@arrow 0359\rightarrowfill@@{#1}{#2}}
\let\emptyset\varnothing
\DeclareMathOperator{\BWE}{BWE}
\theoremstyle{plain}
\newtheorem{theorem}{Theorem}[section]
\newtheorem{lemma}{Lemma}[section]
\newtheorem{remark}{Remark}[section]
\newtheorem{proposition}{Proposition}[section]
\newtheorem{corollary}{Corollary}[section]
\theoremstyle{definition}
\numberwithin{equation}{section}
\newtheorem{definition}{Definition}[section]
\newenvironment{example}
  {\pushQED{\qed}\examplex}
  {\popQED\endexamplex}
\newtheorem{assumption}{Assumption}
\newcommand{\N}{\mathbb{N}}
\newcommand{\conv}{\textup{Conv}}
\newcommand{\HF}{\textup{HF}}
\newcommand{\id}{\textup{id}}
\newcommand{\Groebner}{Gr\"{o}bner\xspace}
\newcommand{\x}{x}
\newcommand{\f}{\mathcal{F}}
\newcommand{\diag}{\textup{diag}}
\newcommand{\crt}{\texttt{crt}}
\newcommand{\F}{\mathcal{F}}
\newcommand{\I}{\mathcal{I}}
\renewcommand{\Im}{\mathrm{Im}}
\newcommand{\Sylv}{\textup{Sylv}}
\newcommand{\Mat}{\textrm{M}}
\newcommand{\B}{{B}}
\newcommand{\Cok}{\textup{Coker}}
\newcommand{\rank}{\textup{rank}}
\newcommand{\M}{\mathcal{M}}
\newcommand{\Z}{\mathbb{Z}}
\newcommand{\C}{\mathbb{C}}
\newcommand{\z}{\zeta}
\newcommand{\D}{\delta}
\renewcommand{\z}{\zeta}
\newcommand{\R}{\mathbb{R}}
\providecommand{\keywords}[1]{\small \textbf{Key words ---} #1}
\providecommand{\classification}[1]{\small \textbf{AMS subject classifications ---} #1}
\newcommand{\pair}[1]{\langle{#1}\rangle}
\newcommand{\ideal}[1]{\left \langle {#1}  \right \rangle}
\newcommand{\link}{{\url{https://github.com/simontelen/JuliaEigenvalueSolver}}}
\newlength{\bibitemsep}\setlength{\bibitemsep}{5pt}
\newlength{\bibparskip}\setlength{\bibparskip}{-5pt}
\let\oldthebibliography\thebibliography
\renewcommand\thebibliography[1]{%
  \oldthebibliography{#1}%
  \setlength{\parskip}{\bibitemsep}%
  \setlength{\itemsep}{\bibparskip}%
}
\DeclareMathAlphabet{\mathcal}{OMS}{cmsy}{m}{n}
\begin{document}
\title{Yet another eigenvalue algorithm for solving polynomial systems}
% Polynomial equations, matrices and eigenvalues: yet another algorithm
\author[ ]{Mat{\'\i}as R. Bender\thanks{Department of Mathematics, Technische Universit\"{a}t Berlin, \texttt{mbender@math.tu-berlin.de}} \qquad Simon Telen\thanks{Max Planck Institute for Mathematics in the Sciences, Leipzig, \texttt{simon.telen@mis.mpg.de}}}
\date{}
\maketitle

\vspace{-3.5\baselineskip}
% REQUIRED
\begin{abstract}
In latest years, several advancements have been made in symbolic-numerical
eigenvalue techniques for solving polynomial systems. In this article, we add to
this list. We design an algorithm which solves systems with isolated solutions reliably and efficiently. In overdetermined cases, it reduces the task to an eigenvalue
problem in a simpler and considerably faster way than in previous methods, and it can
outperform the homotopy continuation approach. We provide many examples and
an implementation in the proof-of-concept Julia package \texttt{EigenvalueSolver.jl}.
\end{abstract}

\keywords{\small polynomial systems, eigenvalue theorem, symbolic-numerical algorithm}

\classification{\small
65H04, %roots of polynomial eqns
65H10 %systems of equations
}

\section{Introduction}

Polynomial systems arise in many areas of applied science \cite{sommese_numerical_2005,cox2020applications}.
This paper is concerned with solving such systems of equations using numerical computations, that is, using finite precision, floating point arithmetic.
Two important classes of numerical algorithms are algebraic algorithms \cite{emiris1999matrices,kreuzer_computational_2000} and homotopy continuation methods \cite{burgisser_condition_2013,sommese_numerical_2005}. See \cite[Ch.~2]{cox2020applications} for an overview.
In this work, we focus on algorithms of the former type.

Algebraic algorithms are also called eigenvalue algorithms. They consist of two steps. Step (A) uses linear algebra operations to reduce the problem to an eigenvalue problem or univariate polynomial root finding problem.  Step (B) is to solve the eigenvalue or univariate root finding problem using numerical tools. 
Classical examples include Gr\"obner basis and resultant algorithms, see \cite[Ch.~2]{cox2013ideals} or \cite{bender_thesis_2019}. These use symbolic manipulations for step (A), pushing the numerical linear algebra back to the eigenvalue computation in step (B). The reason for this is that, when performed in finite precision arithmetic, these approaches are numerically unstable for step (A), see for instance \cite{kondratyev2004numerical}. Border basis methods have been developed to remedy this unstable behaviour \cite{mourrain_new_1999,stetter2004numerical} and variants based on nullspace computations were introduced in \cite{dreesen2012back}. Methods for performing step (B) are based on linear algebra \cite{corless1997reordered} or, recently, on multilinear algebra \cite{vanderstukken2017systems}.

Two special types of structured matrices play a central role in
algebraic algorithms: Macaulay (or Sylvester) matrices and
multiplication matrices. Macaulay matrices have a sparse,
quasi-Toeplitz structure. They contain the coefficients of the
equations and are manipulated in step (A).
The result of these manipulations is a set of multiplication
matrices. These are structured in the sense that they
commute. Multiplication matrices represent multiplication operators in
the coordinate ring of the solution set \cite[Ch.~5]{cox2013ideals}
and their eigenstructure reveals the coordinates of the solutions
\cite[Ch.~2]{cox_using_2005}. As the Macaulay matrices are typically much larger than multiplication matrices, step (A) determines the running time of the algorithm. This motivates the efforts in active research, including the present paper, to design algorithms which use smaller Macaulay matrices.

In practice, to construct multiplication matrices from Macaulay matrices, we need to choose a basis for the aforementioned coordinate ring. The numerical stability of step (A) strongly depends on this choice \cite{telen2018stabilized}.
Gr\"obner and border basis methods use bases corresponding to special
sets of monomials. For instance, they require these monomials to come
from a monomial ordering \cite[Ch.~2,\S 2]{cox2013ideals} or to be
`connected-to-1' \cite{mourrain_new_1999}. Recent developments showed
that numerical linear algebra heuristics can be applied to choose
bases that improve the accuracy substantially
\cite{telen2018stabilized}. This has lead to the development of
truncated normal forms \cite{telen2018solving}, which use much more
general bases of monomials coming from QR factorizations with optimal
column pivoting, or non-monomial bases coming from singular value
decompositions or Chebyshev representations~\cite{mourrain2019truncated}.

Other than making a good choice of basis, in order to stabilize
algebraic algorithms it is necessary to take solutions at infinity
into account. Loosely speaking, a polynomial system has solutions at
infinity if the slightest random perturbation of the nonzero
coefficients introduces new solutions with large coordinates. This is
best understood in the language of toric geometry
\cite{cox_toric_2011}. Situations in which there are finitely many
solutions at infinity (see Assumption \ref{ass:zerodim}) can be
handled by introducing an extra randomization in the algorithm, which
was first used in \cite{telen2019numerical,bender_toric_2020}. Where
classically the multiplication matrices represent `multiplication
with a polynomial $g$', the multiplication matrices in these papers
represent `multiplication with a rational function $g/f_0$', where
$f_0$ is a random polynomial that does not vanish at any of the solutions to
the system. For details and a geometric interpretation, we refer to
\cite{telen2019numerical,bender_toric_2020}. We will use a similar approach in this paper. In terms of our results, choosing the denominator $f_0$ randomly is essential in cases where the
conditions in Lemma \ref{thm:propertiesOFh} are not satisfied for
$f_0 = 1$, while they are for a generic $f_0$ (Example \ref{ex:nf0}).

We summarize the contributions of the present paper. First, we adapt
the eigenvalue theorem \cite[Ch.~2, Thm.~4.5]{cox_using_2005} to
reduce the problem of solving polynomial systems to the computation of
eigenvalues.
Our new version allows to compute solutions from matrices that need
not represent classical multiplication operators; see
\Cref{ex:notMultMap}.
We propose an easy-to-state and easy-to-verify criterion for Macaulay
matrices to be `large enough' for constructing such matrices
(\Cref{thm:propertiesOFh}).
Moreover, we identify a broad class of overdetermined polynomial
systems, namely semi-regular unmixed systems, for which these Macaulay
matrices are much smaller than those in classical algorithms,
e.g. \cite{emiris_complexity_1996,massri2016solving}.
We distil these new insights, together with the recent advances in
numerical eigenvalue algorithms explained above, into an algorithm
(Algorithm \ref{alg:solve}).
We introduce the notion of admissible tuples (\Cref{def:condSolv}),
which parametrize Macaulay matrices satisfying our criterion from
\Cref{thm:propertiesOFh} and show how to construct such tuples for
structured systems of equations.
Additionally, we adapt \cite[Sec.~4]{mourrain2019truncated} to obtain
an algorithm for computing smaller admissible tuples for
overdetermined, unmixed systems (\Cref{alg:getATunmixed}).
We provide a Julia implementation of our algorithms, available online
at \link. Our experiments show the efficiency and accuracy of
this package. They contain a comparison with the state-of-the-art  Julia package \texttt{HomotopyContinuation.jl}
\cite{breiding2018homotopycontinuation}. We show that our eigenvalue
methods are competitive, and in strongly overdetermined cases, they
are considerably faster.

To make the paper accessible to a wide audience, we state most of our results and proofs using only terminology from linear algebra. For results that require more background in algebraic (and in particular toric) geometry, we sketch proofs and provide full references.

The paper is organized as follows. In Section \ref{sec:alg}, we
introduce our adapted eigenvalue theorem, admissible tuples and our
algorithm.  In Section \ref{sec:constr}, we present constructions for
admissible tuples for different families of polynomial systems.
Finally, in Section \ref{sec:numexp}, we demonstrate the effectiveness
of our algorithms through extensive numerical experimentation. Our
computations are done using the Julia package
\texttt{EigenvalueSolver.jl}.

\section{The algorithm} \label{sec:alg}

  In this section, we present a symbolic-numerical algorithm to solve
  polynomial systems (Algorithm \ref{alg:solve}). We show that the solutions of the system can be
  obtained from the eigenvalues of certain matrices $M_g$ defined in
  \eqref{eq:defMg}. For some choice of input for Algorithm \ref{alg:solve}, these matrices represent
  \emph{multiplication operators}, see Remark \ref{rmk:multmtcs}. In this case, the results of this section are well-known, e.g. \cite{lazard_resolution_1981}. However, in general, our matrices $M_g$ may not have this interpretation. This is illustrated in Example \ref{ex:notMultMap}. The upshot in these cases
  is that they can be computed more efficiently.% than multiplication operators.

Consider the polynomial ring $R := \C[x_1,\dots,x_n]$ and a tuple of
$s$ polynomials $\f := (f_1,\dots,f_s) \in R^s$, with $s \geq n$. Our aim in this section is to present an algorithm for solving the system of equations $\F(\x) = 0$, where we use the short notation $\x$ for $(x_1,\ldots, x_n)$. A point $\z \in \C^n$ is called a \emph{solution} of $\F$ if $\F(\z) = 0$, that is, $f_i(\z) = 0$ for every $i \in \{1,\dots, s\}$.
For a vector $\alpha = (\alpha_1, \ldots, \alpha_n) \in \N^n$, we denote by $\x^\alpha$ the
monomial $\prod_{i = 1}^n x_i^{\alpha_i} \in R$. We say that $\alpha$
is the \emph{exponent} of the monomial $\x^\alpha$.
In what follows, we write each polynomial $f_i$ as
\[
f_i := \sum_{\alpha \in \N^n} c_{i,\alpha} \, \x^\alpha.
\]
where $c_{i,\alpha} \in \C$ are the \emph{coefficients} of $f_i$ and finitely many of them are nonzero.
We define the \emph{support} $A_i$ of $f_i$ as the set of
exponents $\alpha \in \N^n$ corresponding to non-zero coefficients
$c_{i,\alpha} \in \C$,
\[
  A_i := \{\alpha \in \N^n : c_{i,\alpha} \neq 0 \}.
\]
Given two subsets $E_1,E_2 \subset \N^n$, we denote by $E_1 + E_2$ the
\emph{Minkowski sum} of $E_1, E_2$, that is,
\[
E_1 + E_2 := \{\alpha + \beta : \alpha \in E_1, \beta \in E_2\}.
\]
For a finite set of exponents $E \subset \N^n$, we write $R_{E}$
for the subvector space of $R$ spanned by the monomials with exponent in
$E$. That is, 
\[
R_E := \bigoplus_{\alpha \in E} \, \C \, \cdot \x^\alpha.
\]
Observe that, given $g_1 \in R_{E_1}$ and
$g_2 \in R_{E_2}$, we have that $g_1 \, g_2 \in R_{E_1 + E_2}$.

Consider a tuple of $s$ finite sets of exponents
$\bm{E} := (E_1,\dots,E_s)$, where $E_i \subset \N^n$, and another
finite set of exponents $D \subset \N^n$ such that for every
$i \in \{1, \ldots, s\}$, $D$ contains the exponents in $A_i + E_i$.
An essential ingredient for our eigenvalue algorithm is the \emph{Sylvester map}
\[
\begin{array}{c c c c}
\Sylv_{(\f,\bm{E};D)} : & R_{E_1} \times \dots \times R_{E_s} &
\rightarrow & R_{D}\\ &
(g_1,\dots,g_s) & \mapsto & \sum_i g_i \, f_i.
\end{array}
\]
This is a linear map between finite
dimensional vector spaces, so we can represent it by a matrix
\[\Mat(\f,\bm{E};D) \in \C^{\#D \times (\sum_i \# E_i)}.\]
Matrices obtained by using the standard monomial bases for the vector spaces $R_{E_i}$ and $R_D$ in this representation are often called \emph{Macaulay matrices}.  We index the rows of the matrix
with the exponents belonging to $D$ and the columns with pairs
$\{(i,\beta_i) : i \in \{1, \ldots, s\}, \beta_i \in E_i\}$. The
$(\alpha, (i,\beta_i))$-entry of $\Mat(\f,\bm{E};D)$ contains the
coefficient $c_{i,(\alpha-\beta_i)}$ of $f_i$, that is,
\[
\Mat(\f,\bm{E};D)_{(\alpha, (i,\beta_i))} := c_{i,\alpha-\beta_i}.
\]
Observe that this coefficient might be zero. The ordering of the
exponents is of no importance in the scope of this work. We will
therefore not specify it and assume that some ordering is fixed for
all tuples $A_i, E_i, D$ throughout the paper.

\begin{example}
  To avoid subscripts, we replace the variables $x_1$ and $x_2$
  by $x$ and $y$, respectively. Consider the sets of
  exponents $A_1,\dots,A_3$ and the system $\f := (f_1, f_2, f_3)$ given by
  \[
  \begin{array}{c}
%   A_0 := \{(0,0),(1,0),(0,1),(0,2)\}, \\
    A_1 := \{(0,0),(1,0),(0,1),(0,2)\}, \\
    A_2 := \{(0,0),(1,0),(2,0),(0,1)\},\\
    A_3 := \{(0,0),(1,0),(2,0),(0,1)\},
  \end{array}
  \qquad
    \begin{array}{l l}
% f_0 := 3 + 3 \, x + y + 2 \, y^2  \in R_{A_0}\\ %% 2*y^2+3*x+y+3,
  f_1 := -1  + 2 \, x + 2 \, y \phantom{{}^2} + y^2 &  \in R_{A_1},\\ %% y^2+2*x+2*y-1,
  f_2 := -1 + \phantom{2 \,} x + \phantom{2 \,} x^2 + y & \in R_{A_2} ,\\ %%  1*x^2+1*x+1*y-1
  f_3 := -1 + 2 \, x + 2 \, x^2 + y  & \in R_{A_3}. %% 2*x^2+2*x+y-1
  \end{array}
  \]
  We construct the Macaulay matrix $\Mat(\f,\bm{E};D)$, where $\bm{E} := (E_1,E_2,E_3)$ and
  \[
  \begin{array}{l}
    E_1 := \{(0,0),(1,0)\}, \\
    E_2 = E_3 := \{(0,0),(0,1)\},
  \end{array}
  \qquad
  \begin{array}{c c}
    D := & \{(0,0),(1,0),(2,0),(0,1),\\
         & (1,1),(2,1),(0,2),(1,2)\}.
  \end{array}
  \]
  \[
  \Mat(\f,\bm{E};D) =
  \begin{array}{ c | r r r r r r }
    % \hline
    % [1, y, y^2, x, x*y, x*y^2, x^2, x^2*y]
    	& f_1 & x\,f_1 & f_2 & y \, f_2 & f_3 & y\,f_3 \\ \hline
    1	&-1& &-1& &-1& \\ 
    y	&2& &1&-1&1&-1\\ 
    y^2	&1& & &1& &1\\ 
    x	&2&-1&1& &2& \\ 
 x \, y	& &2& &1& &2\\ 
 x\,y^2	& &1& & & & \\ 
    x^2	& &2&1& &2& \\ 
 x^2\,y	& & & &1& &2
  \end{array} 
  \]
\end{example}
\begin{remark}
  \label{rmk:leftMult} 
  Given $\zeta \in \C^n$ and a finite subset $E \subset \N^n$, we denote by $\zeta^E$ the row vector
  \[
  (\zeta^\alpha : \alpha \in E).
  \]
  The vector obtained by the product
  $\zeta^D \cdot \Mat(\f,\bm{E};D) \in \C^{\sum_i \#E_i}$ is indexed by the tuples
  $\{(i,\beta_i) : i \in \{1, \ldots, s\}, \beta_i \in E_i\}$ and the $(i,\beta_i)$-entry is given by $f_i(\zeta) \, \zeta^{\beta_i}$. If $\zeta$ is a solution of
  $\f$, then $\zeta^D$ belongs to the cokernel of
  $\Mat(\f,\bm{E};D)$. Moreover, if $\z \in \C^n$ is such that $\z^{E_i} \neq 0$ for all $i$, the opposite implication also holds. This is the case, for instance, for any solution $\z \in (\C \setminus \{0\})^n$. 
\end{remark}

We define the value
$\HF(\f,\bm{E};D)$ as the corank
of $\Mat(\f,\bm{E};D)$:
\[
\HF(\f,\bm{E};D) := \#D - \textup{Rank}(\Mat(\f,\bm{E};D)).
\]
Let
$\Cok(\f,\bm{E};D) \in \C^{\HF(\f,\bm{E};D) \times \#D}$ be a cokernel matrix (or left null space matrix) of $\Mat(\f,\bm{E};D)$. That is, $\Cok(\f,\bm{E};D)$ has rank $\HF(\f,\bm{E};D)$ and
\[\Cok(\f,\bm{E};D) \cdot \Mat(\f,\bm{E};D) = 0\]
We will index the columns of $\Cok(\f,\bm{E};D)$ with the
exponents in $D$.

\begin{example}[Cont.]
  The system $\f$ has one solution $(-1,1) \in \C^2$. The vector
  \[(-1,1)^D = (1, 1, 1, -1, -1, -1, 1, 1)\] belongs to the cokernel of
  $\Mat(\f,\bm{E};D)$. Moreover, we have that $\HF(\f,\bm{E};D) = 2$
  and
      \[ \belowdisplayskip=-\baselineskip
   \Cok(\f,\bm{E};D) =     
    \begin{blockarray}{ r r r r r r r r}
      1 & y & y^2 & x & x \, y & x \, y^2 & x^2 & x^2 \, y \\
        \begin{block}{[r r r r r r r r]}
    1 & 1 & 1 & -1 & -1 & -1 & 1 & 1 \\
      0 & 0 & 0 & 0 & -1 & 2 & 0 & 1    \\
\end{block}
\end{blockarray}. \qedhere \]
\end{example}

Consider two finite sets of exponents $A_0, E_0 \in \N^n$ such that $A_0 + E_0 \subset D$.
For each polynomial $f_0 \in R_{A_0}$, we define the matrix $N_{f_0}$ as
\begin{align}\label{eq:nf0}
N_{f_0} :=  \Cok(\f,\bm{E};D) \cdot \Mat(f_0,E_0;D).
\end{align}
Observe that $N_{f_0} \in \C^{  \HF(\f,\bm{E};D) \times \#E_0}$ and the columns of
$N_{f_0}$ are indexed by the exponents in $E_0$ (more precisely, by
the pairs $(0,\alpha)$ for each $\alpha \in E_0$).

\begin{lemma}
  \label{thm:propertiesOFh}
    For any $f_0 \in R_{A_0}$ we have that
    \[
      \HF((f_0,\f),(E_0,\bm{E});D) = 0 ~ \Longleftrightarrow ~   N_{f_0} ~\text{ has rank }~ \HF(\f,\bm{E};D),
    \]
    where $(f_0,\f) = (f_0,f_1,\dots,f_s)$ and
    $(E_0,\bm{E}) = (E_0,E_1,\dots,E_s)$.
    Moreover, in that case, for every solution $\zeta \in \C^n$ of
    $\f$ such that the vector $\zeta^{D}$ is non-zero,
    we have $f_0(\zeta) \neq 0$.
  \end{lemma}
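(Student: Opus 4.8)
The plan is to reduce the whole statement to elementary linear algebra about column spaces of the Macaulay matrices involved. The key structural observation is that the Macaulay matrix of the enlarged system is a horizontal concatenation of the two matrices already in play: reading off the defining formula $\Mat(\cdot)_{(\alpha,(i,\beta_i))}=c_{i,\alpha-\beta_i}$, the columns of $\Mat((f_0,\f),(E_0,\bm{E});D)$ indexed by pairs $(0,\beta_0)$ with $\beta_0\in E_0$ are exactly the columns of $\Mat(f_0,E_0;D)$, and the columns indexed by $(i,\beta_i)$ with $i\ge 1$ are exactly the columns of $\Mat(\f,\bm{E};D)$ (up to a reordering of columns, which does not affect ranks). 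Here we use the standing hypotheses $A_0+E_0\subseteq D$ and $A_i+E_i\subseteq D$, which guarantee that this enlarged matrix is well-defined. Writing $V\subseteq\C^{\#D}$ for the column space of $\Mat(\f,\bm{E};D)$ and $W\subseteq\C^{\#D}$ for the column space of $\Mat(f_0,E_0;D)$, the column space of the enlarged matrix is $V+W$, hence
\[
\HF((f_0,\f),(E_0,\bm{E});D)=\#D-\dim(V+W),
\]
so the left-hand side of the asserted equivalence holds if and only if $V+W=\C^{\#D}$. Note also that $\dim V=\#D-\HF(\f,\bm{E};D)$.

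Next I would compute $\rank N_{f_0}$ in the same terms. Consider the linear map $\phi\colon\C^{\#D}\to\C^{\HF(\f,\bm{E};D)}$, $v\mapsto\Cok(\f,\bm{E};D)\,v$. Since $\Cok(\f,\bm{E};D)$ has full row rank $\HF(\f,\bm{E};D)$, the map $\phi$ is surjective and $\dim\ker\phi=\#D-\HF(\f,\bm{E};D)=\dim V$; moreover the identity $\Cok(\f,\bm{E};D)\cdot\Mat(\f,\bm{E};D)=0$ says precisely that $\phi$ kills every column of $\Mat(\f,\bm{E};D)$, i.e. $V\subseteq\ker\phi$, so in fact $\ker\phi=V$. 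Since the columns of $N_{f_0}=\Cok(\f,\bm{E};D)\cdot\Mat(f_0,E_0;D)$ are the $\phi$-images of the columns of $\Mat(f_0,E_0;D)$, the column space of $N_{f_0}$ equals $\phi(W)$, and therefore
\[
\rank N_{f_0}=\dim\phi(W)=\dim W-\dim(W\cap V)=\dim(V+W)-\dim V=\dim(V+W)-\#D+\HF(\f,\bm{E};D),
\]
using the dimension formula $\dim(W\cap V)=\dim W+\dim V-\dim(V+W)$. As $N_{f_0}$ has $\HF(\f,\bm{E};D)$ rows, $\rank N_{f_0}\le\HF(\f,\bm{E};D)$, and the displayed identity shows that equality holds exactly when $\dim(V+W)=\#D$, which by the first paragraph is equivalent to $\HF((f_0,\f),(E_0,\bm{E});D)=0$. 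This establishes the equivalence.

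For the "moreover" clause, suppose $\HF((f_0,\f),(E_0,\bm{E});D)=0$, i.e. $\Mat((f_0,\f),(E_0,\bm{E});D)$ has trivial left null space, and let $\zeta\in\C^n$ be a solution of $\f$ with $\zeta^D\ne 0$. Applying Remark~\ref{rmk:leftMult} to the system $(f_0,\f)$ with multiplier tuple $(E_0,\bm{E})$ and target $D$, the row vector $\zeta^D\cdot\Mat((f_0,\f),(E_0,\bm{E});D)$ has $(i,\beta_i)$-entry $f_i(\zeta)\,\zeta^{\beta_i}$, which vanishes for every $i\ge 1$ because $\zeta$ solves $\f$. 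If $f_0(\zeta)$ were $0$ as well, the entries with $i=0$ would vanish too, so $\zeta^D$ would be a nonzero vector in the left null space of $\Mat((f_0,\f),(E_0,\bm{E});D)$ — contradicting $\HF((f_0,\f),(E_0,\bm{E});D)=0$. Hence $f_0(\zeta)\ne 0$.

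The argument is mostly bookkeeping, and I do not anticipate a serious obstacle; the one place that deserves care is pinning down $\ker\phi=V$ precisely (so that the rank of the product $\Cok(\f,\bm{E};D)\cdot\Mat(f_0,E_0;D)$ is governed by $\dim(V+W)$), together with the observation that "$N_{f_0}$ has rank $\HF(\f,\bm{E};D)$" means full row rank. It is also worth double-checking the first claim — that the enlarged Macaulay matrix is literally the block matrix $[\,\Mat(f_0,E_0;D)\ \Mat(\f,\bm{E};D)\,]$ — since the whole proof hinges on that identification.
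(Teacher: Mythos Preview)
Your proof is correct and follows essentially the same linear-algebra route as the paper: both arguments hinge on the block decomposition $\Mat((f_0,\f),(E_0,\bm{E});D)=[\,\Mat(f_0,E_0;D)\ \Mat(\f,\bm{E};D)\,]$ and on comparing the left nullspace (equivalently, the column space) of this block with those of its pieces, and your ``moreover'' argument is identical to the paper's. The only cosmetic difference is that the paper phrases the $\Leftarrow$ direction via the intersection of cokernels ($\Cok(f_0,E_0;D)\cap\Cok(\f,\bm{E};D)=\{0\}$), whereas you dualize and work with the sum $V+W$ of column spaces, obtaining the explicit identity $\rank N_{f_0}=\dim(V+W)-\#D+\HF(\f,\bm{E};D)$ that handles both directions at once.
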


  \begin{proof}
    The $\Rightarrow$ direction of the first statement follows
    directly from
    \[
    \left(\begin{array}{cc}
      N_{f_0} & 0
    \end{array}
              \right)    =  \Cok(\f,\bm{E};D) \cdot
    \Mat((f_0,\f),(E_0,\bm{E});D).
              \]
     For the $\Leftarrow$ direction, suppose that $N_{f_0}$ has rank $\HF(\f,\bm{E};D)$. Then $\Cok(f_0,E_0;D) \cap \Cok(\f,\bm{E};D) = \{0\}$, which implies that the cokernel of $\Mat((f_0,\f),(E_0,\bm{E});D)$ is trivial, and hence it has rank $\#D$.
              The second statement follows from the fact that
              $\Mat((f_0,\f),(E_0,\bm{E});D)$ has trivial cokernel;
              as $\zeta^{D}$ is a non-zero vector, if
              $f_0(\zeta) = 0$, by \Cref{rmk:leftMult}, $\zeta^{D}$
              belongs to the cokernel of
              $\Mat((f_0,\f),(E_0,\bm{E});D)$.
  \end{proof}

  \begin{example}[Cont.] \label{ex:nf0}
    We consider the sets of exponents $A_0,E_0$ and the polynomial
    $f_0$ given by 
    \[
      A_0 = \{(0,0),(1,0),(0,1)\}, \qquad E_0 = \{(0,0),(1,0),(0,1)\}, \qquad f_0 := 1 + 3 \, x + y  \in R_{A_0}.
      \]
      In this case, we have 
     \[
    N_{f_0} =
    \begin{blockarray}{ccc}
      1 & x & y \\
      \begin{block}{[rrr]}
        -1 & 1 & -1 \\
        0 & -1 & -3 \\
      \end{block}
    \end{blockarray}.\]
  Observe that, even though $1 \in R_{A_0}$, the matrix
  $N_{1}$ is not full-rank.
\end{example}
  
In what follows, we say that a property holds for \emph{generic}
points of a vector space if it holds for all points not contained in a
subset of Lebesgue measure zero.  Note that $N_{f_0}$ is a matrix
whose entries depend linearly on the coefficients of $f_0$. This means
that if there exists $f_0 \in R_{A_0} $ such that $N_{f_0}$ has rank
$\HF(\f,\bm{E};D)$, then $\rank (N_h) = \HF(\f,\bm{E};D)$ for generic
elements $h \in R_{A_0}$.  Below, we assume that there is
$f_0 \in R_{A_0}$ such that $\rank(N_{f_0}) =\HF(\f,\bm{E};D)$ and we
fix such an $f_0 \in R_{A_0}$. This assumption is very mild and given
$\F, A_0, {\bm E}, D$, it is easy to check if it holds.
  
For ease of notation, we will write $\gamma := \HF(\f,\bm{E};D)$.
Given a set of exponents $\B \subset E_0$, we define the submatrix
$N_{f_0,\B} = \Cok(\f,\bm{E};D) \cdot \Mat(f_0,\B;D) \in \C^{\gamma \times \#{\B}}$ of $N_{f_0}$ consisting of
its columns indexed by $\B$.
We fix $\B \subset E_0$ of cardinality $\gamma$ such that
$N_{f_0,\B} \in \C^{\gamma \times \gamma}$ is invertible.
For each $g \in R_{A_0}$, we define the matrix
$M_g \in \C^{\gamma \times \gamma}$, defined as
\begin{align}
  \label{eq:defMg}
  M_g := N_{g,\B} \cdot N_{f_0,\B}^{-1} .
\end{align}

  \begin{example}
    [Cont.] \label{ex:basisB}

    We fix the basis $\B = \{1,x\}$ and the matrix
    $N_{f_0,\B} = \left[\begin{smallmatrix} -1 & 1 \\ 0 &
        -1 \end{smallmatrix}\right]$.
    Then, for $g = -1 + 3 \, x + 2 \, y$, we have
  \[\belowdisplayskip=-\baselineskip
  M_g = \left[\begin{array}{rr} -2&2\\0&-2\end{array}\right] \cdot
  \left[\begin{array}{rr} -1&-1\\0&-1\end{array}\right] =
  \left[\begin{array}{rr} 2&0\\0&2\end{array}\right]
  \qquad
  \text{and}
  \qquad
  M_x =
  \left[\begin{array}{rr} 1&0\\0&0\end{array}\right]
  \]
\end{example}
  
  \begin{remark}
    \label{rmk:linearity}
    The map $g \in R_{A_0} \mapsto M_g \in \C^{\gamma \times \gamma}$
    is a linear map. That is, for $\lambda \in \C$, $g_1,g_2 \in R_{A_0}$,
    \[
    M_{g_1 + \lambda \, g_2} =    M_{g_1} + \lambda \, M_{g_2}.
    \]
    Moreover, $M_{f_0}$ is the identity matrix.
  \end{remark}
  A key observation is that we can solve the system of equations
  $\f(\x) = 0$ by computing the eigenstructure of these matrices
  $M_g$, for $g \in R_{A_0}$.
  For that, we adapt the classical eigenvalue theorem from
  computational algebraic geometry (see Remark \ref{rmk:multmtcs}).
  We say that a non-zero row vector $v$ is a \emph{left eigenvector}
  of a matrix $M$ with corresponding eigenvalue $\lambda$ if it
  satisfies $v \cdot M = \lambda v$.
  \begin{theorem}[Eigenvalue theorem]
    \label{thm:evthm}
    Using the notation introduced above, consider a polynomial
    system $\f$ and a polynomial $f_0$ such that $N_{f_0}$ has full-rank;
    see \Cref{eq:nf0}.
    For each solution $\zeta \in \C^n$ of $\f$ such that
    $\zeta^{D} \neq 0$, $M_g$ from \eqref{eq:defMg} has a left eigenvector $v_\z$ such that
    $v_\z \cdot \Cok(\f,\bm{E};D) = \zeta^{D}$. The corresponding
    eigenvalue is $\frac{g}{f_0}(\zeta)$.
    Conversely, if $v$ is a left eigenvector of $M_g$ such that
    $v \cdot \Cok(\f,\bm{E};D) $ is proportional to $\z^D \neq 0$ for
    some $\z \in \C^n$ such that $\z^{E_i} \neq 0$ for all $i$, then
    $\z$ is a solution of $\f$. Moreover, the corresponding eigenvalue
    of $M_g$ is $\frac{g}{f_0}(\zeta)$.
  \end{theorem}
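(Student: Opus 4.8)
The plan is to reduce every object to an evaluation at the solution $\zeta$ by means of \Cref{rmk:leftMult}, and then read the eigenvalue relation directly off the definition \eqref{eq:defMg} of $M_g$. For the forward direction, fix a solution $\zeta$ of $\f$ with $\zeta^{D}\neq 0$. By \Cref{rmk:leftMult}, $\zeta^{D}$ lies in the cokernel of $\Mat(\f,\bm{E};D)$, i.e.\ in the row span of $\Cok(\f,\bm{E};D)$; since $\Cok(\f,\bm{E};D)$ has full row rank $\gamma$, the linear map $v\mapsto v\cdot\Cok(\f,\bm{E};D)$ is injective, so there is a unique $v_\zeta\in\C^{\gamma}$ with $v_\zeta\cdot\Cok(\f,\bm{E};D)=\zeta^{D}$, and $v_\zeta\neq 0$ since $\zeta^{D}\neq 0$. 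The key identity is that, for every $h\in R_{A_0}$ and every $\B\subset E_0$,
\[
v_\zeta \cdot N_{h,\B} \;=\; v_\zeta \cdot \Cok(\f,\bm{E};D) \cdot \Mat(h,\B;D) \;=\; \zeta^{D} \cdot \Mat(h,\B;D) \;=\; h(\zeta)\,\zeta^{\B},
\]
where the last equality applies \Cref{rmk:leftMult} to the one-element tuple $(h)$ with exponent tuple $(\B)$ and $\zeta^{\B}:=(\zeta^{\beta}:\beta\in\B)$. Taking $h=f_0$ and invoking $f_0(\zeta)\neq 0$ — which holds by the ``Moreover'' part of \Cref{thm:propertiesOFh}, since $N_{f_0}$ has rank $\gamma$ — we may solve for $\zeta^{\B}\cdot N_{f_0,\B}^{-1}=f_0(\zeta)^{-1}\,v_\zeta$. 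Substituting this and $h=g$ into \eqref{eq:defMg} gives
\[
v_\zeta \cdot M_g \;=\; v_\zeta \cdot N_{g,\B} \cdot N_{f_0,\B}^{-1} \;=\; g(\zeta)\,\zeta^{\B}\cdot N_{f_0,\B}^{-1} \;=\; \frac{g(\zeta)}{f_0(\zeta)}\,v_\zeta,
\]
so $v_\zeta$ is a left eigenvector of $M_g$ with eigenvalue $\frac{g}{f_0}(\zeta)$, proving the first assertion.

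For the converse, let $v$ be a left eigenvector of $M_g$ with $v\cdot\Cok(\f,\bm{E};D)=c\,\zeta^{D}$ for some $\zeta\in\C^n$ with $\zeta^{D}\neq 0$ and $\zeta^{E_i}\neq 0$ for all $i$. Since $v\neq 0$ and left multiplication by $\Cok(\f,\bm{E};D)$ is injective, $v\cdot\Cok(\f,\bm{E};D)\neq 0$, hence $c\neq 0$. The row vector $v\cdot\Cok(\f,\bm{E};D)$ is annihilated on the right by $\Mat(\f,\bm{E};D)$, because $\Cok(\f,\bm{E};D)\cdot\Mat(\f,\bm{E};D)=0$; therefore $\zeta^{D}\cdot\Mat(\f,\bm{E};D)=0$, and by \Cref{rmk:leftMult} this forces $f_i(\zeta)\,\zeta^{\beta_i}=0$ for all $i$ and all $\beta_i\in E_i$. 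As $\zeta^{E_i}\neq 0$, we conclude $f_i(\zeta)=0$ for every $i$, i.e.\ $\zeta$ is a solution of $\f$. Since also $\zeta^{D}\neq 0$, the forward direction applies: the unique $v_\zeta$ with $v_\zeta\cdot\Cok(\f,\bm{E};D)=\zeta^{D}$ is a left eigenvector of $M_g$ with eigenvalue $\frac{g}{f_0}(\zeta)$. Finally $v\cdot\Cok(\f,\bm{E};D)=c\,\zeta^{D}=(c\,v_\zeta)\cdot\Cok(\f,\bm{E};D)$, so by injectivity $v=c\,v_\zeta$, and hence $v$ has the same eigenvalue $\frac{g}{f_0}(\zeta)$.

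I do not expect a genuine obstacle; this is essentially a bookkeeping argument once \Cref{rmk:leftMult} and \Cref{thm:propertiesOFh} are available. The two points that require care are: (i) the injectivity of $v\mapsto v\cdot\Cok(\f,\bm{E};D)$, valid since $\Cok(\f,\bm{E};D)$ has full row rank $\gamma$, which is what lets us both define $v_\zeta$ unambiguously and identify $v$ with $c\,v_\zeta$ in the converse; and (ii) the appeal to \Cref{thm:propertiesOFh} to ensure $f_0(\zeta)\neq 0$, which is precisely what makes the quotient $g/f_0$ well defined at every solution with $\zeta^{D}\neq 0$. Apart from that, one only needs to keep track of the row-vector conventions and of the indexing of $\Cok(\f,\bm{E};D)$, $N_{h,\B}$, and $M_g$ by the exponent sets $D$ and $\B$.
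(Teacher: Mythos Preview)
Your proof is correct and slightly more direct than the paper's. The paper proceeds via two auxiliary lemmas and the linearity identity $M_g-\lambda\,\id=M_{g-\lambda f_0}$ (\Cref{rmk:linearity}): it shows $M_{g-\lambda f_0}$ is singular whenever $(g-\lambda f_0)(\zeta)=0$ (\Cref{thm:hasSolNgNonFullRank}), and for the converse eigenvalue computation it multiplies $v\cdot(M_g-\lambda\,\id)=0$ on the right by $N_{f_0,\B}$ and invokes a separate lemma (\Cref{lem:nonzB}) guaranteeing $\zeta^{\B}\neq 0$. You instead establish the single identity $v_\zeta\cdot N_{h,\B}=h(\zeta)\,\zeta^{\B}$ once and read the eigenpair off \eqref{eq:defMg} directly; in the converse you recycle the forward direction via injectivity of $v\mapsto v\cdot\Cok(\f,\bm{E};D)$, which lets you avoid proving $\zeta^{\B}\neq 0$ altogether. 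Your route is more economical for the theorem at hand; the paper's route has the side benefit of isolating \Cref{thm:hasSolNgNonFullRank} and \Cref{lem:nonzB} as standalone facts.
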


  \begin{remark} \label{rmk:multmtcs}
    In some cases, the previous theorem can be derived from the classical \emph{eigenvalue theorem} from computational algebraic geometry, where $M_g$
    represents
    the \emph{multiplication map}
    \[M_g : R / \ideal{\f} \rightarrow R / \ideal{\f}, h \mapsto M_g(h)
    = h \, g, \quad \text{for } g \in R. \]
    Here $\ideal{F} \subset R$ is the ideal generated by $f_1, \ldots, f_s$. As we will see (Example \ref{ex:notMultMap}), this is not always the case. In the context of computer algebra, the eigenvalue theorem was introduced in
    \cite{lazard_resolution_1981} (eigenvalues) and in
    \cite{auzinger_elimination_1988} (eigenvectors). For a historic overview and a proof in terms of matrices, see \cite{cox2020stickelberger} and \cite{emiris_complexity_1996} respectively.
  \end{remark}
  
  To prove Theorem \ref{thm:evthm}, we need two auxiliary lemmas.

  \begin{lemma}
    \label{thm:hasSolNgNonFullRank}
    Let $\zeta \in \C^n$ be a solution of $\f$ such that $\zeta^{D} \neq 0$ and let $g \in R_{A_0}$ be such that $g(\z) = 0$. Then, the matrix $M_g$ is singular. 
  \end{lemma}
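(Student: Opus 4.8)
The plan is to show that $M_g$ has a nontrivial kernel by exhibiting an explicit null vector built from the solution $\zeta$. First I would use \Cref{rmk:leftMult}: since $\zeta$ is a solution of $\f$, the row vector $\zeta^D$ lies in the cokernel of $\Mat(\f,\bm E;D)$, so there is a (unique, since the rows of $\Cok(\f,\bm E;D)$ are a basis of that cokernel) row vector $v_\zeta \in \C^{1\times\gamma}$ with $v_\zeta \cdot \Cok(\f,\bm E;D) = \zeta^D$. Since $\zeta^D\neq 0$, we have $v_\zeta \neq 0$. The natural candidate for a kernel vector of $M_g$ is $w := v_\zeta \cdot N_{f_0,\B}$, and I would show $w \cdot M_g = 0$ by computing $w\cdot M_g = v_\zeta\cdot N_{f_0,\B}\cdot N_{f_0,\B}^{-1}\cdot N_{g,\B} = v_\zeta\cdot N_{g,\B}$.

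The key computation is then to evaluate $v_\zeta \cdot N_{g,\B}$. By definition $N_{g,\B} = \Cok(\f,\bm E;D)\cdot \Mat(g,\B;D)$, so $v_\zeta\cdot N_{g,\B} = \zeta^D \cdot \Mat(g,\B;D)$. By the same reasoning as in \Cref{rmk:leftMult} applied to the single polynomial $g$ with exponent set $\B \subset E_0$, the $(0,\beta)$-entry of $\zeta^D \cdot \Mat(g,\B;D)$ equals $g(\zeta)\,\zeta^\beta$. Since $g(\zeta) = 0$ by hypothesis, this entire vector vanishes, so $v_\zeta\cdot N_{g,\B} = 0$, hence $w\cdot M_g = 0$.

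It remains to check that $w = v_\zeta \cdot N_{f_0,\B} \neq 0$, which gives a nonzero left null vector of $M_g$ and forces $M_g$ to be singular. This follows because $N_{f_0,\B}$ is invertible (by our choice of $\B$) and $v_\zeta\neq 0$. I expect the only mild subtlety — the main thing to get right — is the bookkeeping in the second paragraph: identifying $\zeta^D \cdot \Mat(g,\B;D)$ with the vector $(g(\zeta)\,\zeta^\beta)_{\beta\in\B}$, which is exactly the content of \Cref{rmk:leftMult} with $E_0$ replaced by the subset $\B$ and the single polynomial $g$ in place of $\f$; everything else is linear-algebra manipulation that goes through without obstruction.
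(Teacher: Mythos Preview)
Your approach is essentially the paper's, and the key computation $v_\zeta \cdot N_{g,\B} = 0$ is correct. However, there is a genuine slip in the linear algebra around the vector $w$. By definition (\Cref{eq:defMg}) we have $M_g = N_{g,\B}\cdot N_{f_0,\B}^{-1}$, so
\[
w\cdot M_g \;=\; v_\zeta \cdot N_{f_0,\B}\cdot N_{g,\B}\cdot N_{f_0,\B}^{-1},
\]
which is \emph{not} $v_\zeta\cdot N_{g,\B}$ as you wrote; you have implicitly used $M_g = N_{f_0,\B}^{-1}\cdot N_{g,\B}$, reversing the factors.

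The fix is to drop $w$ entirely: once you have $v_\zeta \cdot N_{g,\B}=0$, multiply on the right by $N_{f_0,\B}^{-1}$ to get $v_\zeta\cdot M_g = 0$ directly, and since $v_\zeta\neq 0$ this shows $M_g$ is singular. This is exactly what the paper does (it stops at $v_\zeta\cdot N_{g,\B}=0$, the singularity of $M_g$ being immediate). The remaining step you worried about, verifying $w\neq 0$, then becomes unnecessary.
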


  \begin{proof}
    By \Cref{rmk:leftMult}, $\zeta^{D}$ belongs to the cokernel of
    $\Mat(\f,\bm{E};D)$, hence there is a row vector $v_\z \in \C^{\gamma} \setminus \{0\}$ such that
    $v_\z \cdot \Cok(\f,\bm{E};D) = \zeta^{D}$. Moreover, $\zeta^{D}$
    belongs to the cokernel of $\Mat(g,E_0;D)$. Hence,
    $v_\z \cdot N_{g} = 0$ and so $v_\z \cdot N_{g, \B} = 0$.
  \end{proof}
  
  \begin{lemma} \label{lem:nonzB}
  Let $\z \in \C^n$ be a solution of $\f$. If $\z^\B = 0$, then $\z^D = 0$. 
  \end{lemma}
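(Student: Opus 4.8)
The plan is to leverage the invertibility of $N_{f_0,\B}$ together with the observation, already used in \Cref{thm:hasSolNgNonFullRank}, that the row vector $\z^{D}$ attached to a solution lies in the cokernel of $\Mat(\f,\bm{E};D)$.

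Concretely, I would first note that, since $\z$ is a solution of $\f$, \Cref{rmk:leftMult} implies $\z^{D}$ belongs to the cokernel of $\Mat(\f,\bm{E};D)$; as the rows of $\Cok(\f,\bm{E};D)$ form a basis of that cokernel, there is a (unique) row vector $v_\z \in \C^{\gamma}$ with $v_\z \cdot \Cok(\f,\bm{E};D) = \z^{D}$. The key computation is then
\[
v_\z \cdot N_{f_0,\B} \;=\; v_\z \cdot \Cok(\f,\bm{E};D) \cdot \Mat(f_0,\B;D) \;=\; \z^{D} \cdot \Mat(f_0,\B;D),
\]
and by \Cref{rmk:leftMult}, applied to the single polynomial $f_0$ with exponent set $\B$, the $(0,\beta)$-entry of the last vector is $f_0(\z)\,\z^{\beta}$. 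Hence $v_\z \cdot N_{f_0,\B} = (f_0(\z)\,\z^{\beta} : \beta \in \B)$.

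To finish, I would invoke the hypothesis $\z^{\B} = 0$, which says exactly that $\z^{\beta}=0$ for every $\beta\in\B$, so the vector above vanishes and $v_\z \cdot N_{f_0,\B} = 0$; since $N_{f_0,\B}$ was chosen invertible, this forces $v_\z = 0$, and therefore $\z^{D} = v_\z \cdot \Cok(\f,\bm{E};D) = 0$, as desired. I do not expect a genuine obstacle here — the argument is essentially a two-line computation mirroring the proof of \Cref{thm:hasSolNgNonFullRank} — the only point worth stating carefully is that $\Cok(\f,\bm{E};D)$ has full row rank $\gamma$, which is what guarantees the existence of $v_\z$.
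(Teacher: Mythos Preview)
Your proposal is correct and follows essentially the same approach as the paper: write $\z^{D}=v_\z\cdot\Cok(\f,\bm{E};D)$, compute $v_\z\cdot N_{f_0,\B}=\z^{D}\cdot\Mat(f_0,\B;D)=(f_0(\z)\,\z^{\beta}:\beta\in\B)=0$, and use invertibility of $N_{f_0,\B}$ to conclude $v_\z=0$. The only difference is that you spell out a little more explicitly why $v_\z$ exists and invoke \Cref{rmk:leftMult} by name, but the argument is the same.
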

\begin{proof}
  Since $\z$ is a solution of $\f$, there is a row vector $v_\z$ such that
  $v_\z \cdot \Cok(\f,\bm{E};D) = \z^D$. By observing that
  $N_{f_0,\B} = \Cok(\f,\bm{E};D) \cdot \Mat(f_0,\B;D)$, the lemma follows from
\begin{align*}
v_\z \cdot N_{f_0,\B} &= (v_\z \cdot \Cok(\f,\bm{E};D)) \cdot \Mat(f_0,\B;D) \\
&= \z^D \cdot  \Mat(f_0,\B;D) \\
&= (\z^\alpha f_0(\z) : \alpha \in \B) = 0,
\end{align*}
where the last line uses $\z^\B = 0$. Since $N_{f_0,\B}$ is invertible, this implies $v_\z = 0$, and thus $\z^D = 0$. 
\end{proof}

  \begin{proof}[Proof of \Cref{thm:evthm}]
    The proof is based on the following observations.
    By Remark~\ref{rmk:linearity}, the eigenvalues of $M_{g}$
    correspond to the values $\lambda \in \C$ such that
    $M_{g} - \lambda \, \id = M_{g - \lambda \, f_0}$ is singular.
    By \Cref{thm:propertiesOFh}, we have $f_0(\zeta) \neq 0$ for each
    solution $\z$ of $\f$.
    Let $\z$ be a solution of $\f$. If
    $\lambda = \frac{g}{f_0}(\zeta)$, the polynomial
    $g - \lambda \, f_0 \in R_{A_0}$ vanishes at $\z$. As by
    assumption $\zeta^{D} \neq 0$, from \Cref{thm:hasSolNgNonFullRank}
    we deduce that $M_{g - \lambda \, f_0}$ is singular. Therefore,
    $\frac{g}{f_0}(\zeta)$ is an eigenvalue of $M_g$. For the
    associated left eigenvector, let $v_\z$ be as in the proof of
    \Cref{thm:hasSolNgNonFullRank}. We have
    $v_\z \cdot N_{g-\lambda f_0,\B} = v_\z \cdot (N_{g,\B} - \lambda
    N_{f_0,\B}) = 0$ and multiplying from the right by
    $N_{f_0,\B}^{-1}$ gives $v_\z \cdot (M_g - \lambda \id) = 0$.
    
    Conversely, suppose that $v$ is a left eigenvector of $M_g$ such
    that $v \cdot \Cok(\f,\bm{E};D) = \z^{D} \neq 0$ for some
    $\z \in \C^n$ (we may assume equality after scaling). By
    \Cref{rmk:leftMult}, under the assumption $\z^{E_i} \neq 0$ for
    all $i$, $\z$ is a solution of $\f$, see Remark \ref{rmk:leftMult}. We now compute the
    corresponding eigenvalue. By definition,
    $v \cdot (M_g - \lambda \id) = 0$ for some $\lambda$. Multiplying
    from the right by $N_{f_0,\B}$ we see that
    $v \cdot N_{g-\lambda f_0,\B} = (v \cdot \Cok(\f,\bm{E};D) ) \cdot
    \Mat( g-\lambda f_0, \B, D) = \z^D \cdot \Mat( g-\lambda f_0, \B,
    D) = 0$. By \Cref{lem:nonzB}, $\z^\B \neq 0$ and since
    $f_0(\z) \neq 0$ (\Cref{thm:propertiesOFh}) we conclude
    $\lambda = \frac{g}{f_0}(\zeta)$.
  \end{proof}

  \begin{example}
    [Cont.] \label{ex:notMultMap}

    The unique eigenvalue of
    $M_g = \left[\begin{smallmatrix} 2 & 0 \\ 0 &
        2 \end{smallmatrix}\right]$ is
    $2 = \frac{g}{f_0}(-1,1)$. Moreover, we have that
    $1 = \frac{x}{f_0}(-1,1)$ is an eigenvalue of $M_x = \left[\begin{smallmatrix} 1 & 0 \\
        0 & 0 \end{smallmatrix}\right]$ whose associated eigenvector
    $(1,0)$ satisfies $(1,0) \cdot \Cok(\f,\bm{E};D) = (-1,1)^D$.
    Observe that, as the system $\f$ has only one solution, namely
    $(-1,1)$, with multiplicity one, the matrices $M_x$ and $M_g$ are
    not multiplication operators.
  \end{example}
  
  We now characterize row vectors $v$ that are an eigenvector of all
  matrices in $\M := \{M_h ~:~ h \in R_{A_0} \}$. Observe that, by
  Remark~\ref{rmk:linearity}, $\M$ is a vector space.
  For any nonzero row vector $v \in \C^{\gamma} \setminus \{0\}$, we
  define the subspace
  \[
    \M(v) = \{M_h \in \M ~:~ v \text{ is a left eigenvector of } M_h \}.
  \]
  One can check that $\M(v)$ is a vector space. We say that $v$ is an
  eigenvector of $\M$ if $\M(v) = \M$.

\begin{example}
  [Cont.] \label{ex:eigenvectors}

  Any vector in $\C^2$ is an eigenvector of
  $M_g = \left[\begin{smallmatrix} 2 & 0 \\ 0 &
      2 \end{smallmatrix}\right]$, but for $h = 1+x+y$, we obtain
  $M_{h} = \left[\begin{smallmatrix} -1 & 0 \\ 0 &
      1 \end{smallmatrix}\right]$, which has left
  eigenvectors $(1,0)$ and $(0,1)$. These vectors are also
  eigenvectors of
  $M_x = \left[\begin{smallmatrix} 1 & 0 \\ 0 &
      0 \end{smallmatrix}\right]$. It is straightforward to check that
  $\{M_g,M_{h},M_x\}$ generate $\M$ as a vector space, so that
  $(1,0)$ and $(0,1)$ are eigenvectors of $\M$.
  This means that there might be eigenvectors of $\cal M$ which are
  not associated to solutions of $\f$.
  Also, observe that the matrices in $\cal M$ commute.
\end{example}

\begin{proposition} \label{prop:commonevec}
  Fix a non-zero vector $v \in \C^\gamma$. We have that $v$ is an
  eigenvector of $\M$ if and only if it is a left eigenvector of
  $M_h$ for a generic $h \in R_{A_0}$.
\end{proposition}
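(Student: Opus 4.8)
The plan is to reduce the statement to two elementary facts: by the linearity of $h \mapsto M_h$ (\Cref{rmk:linearity}), being a left eigenvector of $M_h$ is a \emph{linear} condition on $h$; and a proper linear subspace of a finite-dimensional vector space has Lebesgue measure zero. The forward direction is immediate: if $v$ is an eigenvector of $\M$, then $\M(v) = \M$ by definition, so $v$ is a left eigenvector of $M_h$ for every $h \in R_{A_0}$, in particular for a generic one.

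For the converse, I would consider the map $\phi \colon R_{A_0} \to \C^{\gamma}$, $\phi(h) := v \cdot M_h$, which is linear by \Cref{rmk:linearity}. Since $v \neq 0$, the vector $v$ is a left eigenvector of $M_h$ (with some eigenvalue, possibly $0$) exactly when $v \cdot M_h \in \Span(v)$. Hence
\[
W := \{\, h \in R_{A_0} : v \text{ is a left eigenvector of } M_h \,\} = \phi^{-1}(\Span(v))
\]
is a linear subspace of $R_{A_0}$, being the preimage of the line $\Span(v)$ under the linear map $\phi$. (Concretely, if $v \cdot M_{h_1} = \lambda_1 v$ and $v \cdot M_{h_2} = \lambda_2 v$, then $v \cdot M_{h_1 + \mu h_2} = (\lambda_1 + \mu\lambda_2) v$.) Now assume $v$ is a left eigenvector of $M_h$ for generic $h$; unwinding the definition of \emph{generic}, the complement $R_{A_0} \setminus W$ is contained in a set of Lebesgue measure zero. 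A proper linear subspace of the finite-dimensional space $R_{A_0}$ has measure zero, so its complement cannot be contained in a measure-zero set; therefore $W = R_{A_0}$, i.e.\ $\M(v) = \M$, which is exactly the assertion that $v$ is an eigenvector of $\M$.

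The argument is short, and I expect the only point needing care to be the identification of $W$ as a linear subspace, which is precisely where \Cref{rmk:linearity} is essential (it would fail for an arbitrary family of matrices). Once that is in place, the passage from "holds for generic $h$" to "holds for all $h$" is a purely linear-algebraic statement about subspaces and Lebesgue measure, requiring no algebraic geometry.
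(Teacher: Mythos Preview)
Your proof is correct and follows essentially the same approach as the paper: both identify the set of $h$ for which $v$ is a left eigenvector of $M_h$ as a linear subspace (via \Cref{rmk:linearity}), then argue that a subspace which is ``generic'' must be the whole space. The only cosmetic difference is that the paper phrases the last step in Zariski-topology language (a linear subspace is closed, and closed plus dense implies everything), whereas you phrase it in the Lebesgue-measure language actually used in the paper's definition of \emph{generic}; your version is arguably more self-contained for that reason.
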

\begin{proof}
The `only if' direction is clear. For the `if' direction, let $U \subsetneq R_{A_0}$ be the subset of polynomials $h$ for which $v$ is an eigenvector of $M_{h}$. It is easy to see that $U$ is a linear subspace, so that it is closed in the Zariski topology. It is also dense by assumption, so we conclude that $U = R_{A_0}$. 
% Then $M_{h} \in \M(v) \subsetneq \M$. 
%Since $\M(v) \subset \M$ is a vector subspace, this is a contradiction for generic $M_{h}$. Hence, we arrive at a contradiction for generic $h \in R_{A_0}$. 
\end{proof}

\begin{proposition} \label{prop:multeigenval}
  Consider eigenvectors $v_1, \ldots, v_m$ of $\M$. We have that
  $v_1, \ldots, v_m$ correspond to the same eigenvalue of $M_h$ for
  all $M_h \in \M$ if and only if they correspond to the same
  eigenvalue of $M_h$ for a generic $h \in R_{A_0}$.
\end{proposition}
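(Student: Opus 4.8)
The plan is to reduce the claim to the same ``proper linear subspace versus dense set'' dichotomy used in the proof of \Cref{prop:commonevec}. The key preliminary observation is that, for a fixed eigenvector $v_i$ of $\M$, the eigenvalue of $M_h$ attached to $v_i$ is a \emph{linear} function of $h$. Since $v_i \neq 0$, for every $h \in R_{A_0}$ there is a unique scalar $\lambda_i(h) \in \C$ with $v_i \cdot M_h = \lambda_i(h)\, v_i$, which defines a map $\lambda_i \colon R_{A_0} \to \C$. Using \Cref{rmk:linearity}, for $h_1, h_2 \in R_{A_0}$ and $\mu \in \C$ we have
\[
v_i \cdot M_{h_1 + \mu h_2} = v_i \cdot (M_{h_1} + \mu\, M_{h_2}) = \bigl(\lambda_i(h_1) + \mu\, \lambda_i(h_2)\bigr)\, v_i ,
\]
so $\lambda_i(h_1 + \mu h_2) = \lambda_i(h_1) + \mu\, \lambda_i(h_2)$; that is, $\lambda_i$ is a linear functional on $R_{A_0}$.

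Next I would rephrase both sides of the equivalence in terms of these functionals. The statement that $v_1, \dots, v_m$ all correspond to the same eigenvalue of $M_h$ for every $M_h \in \M$ means exactly $\lambda_1 = \cdots = \lambda_m$ as elements of the dual of $R_{A_0}$. The statement that this holds for a generic $h$ means that the set
\[
W := \{\, h \in R_{A_0} : \lambda_1(h) = \cdots = \lambda_m(h) \,\} = \bigcap_{1 \le i < j \le m} \ker\bigl(\lambda_i - \lambda_j\bigr)
\]
has complement of Lebesgue measure zero in $R_{A_0}$. Note that $W$ is a linear subspace of $R_{A_0}$.

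The implication from left to right is immediate: if $\lambda_1 = \cdots = \lambda_m$, then $W = R_{A_0}$, which is generic. For the converse, assume $W$ is generic, i.e.\ $R_{A_0} \setminus W$ has measure zero. If $W$ were a proper subspace of $R_{A_0}$, then $W$ itself would have measure zero, forcing all of $R_{A_0}$ to have measure zero, which is impossible (the degenerate case $R_{A_0} = 0$ being vacuous, as then $\gamma = 0$ and there are no nonzero vectors $v_i$). Hence $W = R_{A_0}$, so $\lambda_i - \lambda_j$ vanishes identically for all $i, j$, i.e.\ $\lambda_1 = \cdots = \lambda_m$, as desired.

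I do not expect a genuine obstacle here. The only point requiring a little care is checking that $h \mapsto \lambda_i(h)$ is well defined and linear, which uses only $v_i \neq 0$ together with \Cref{rmk:linearity}. Once that is in place, the argument is precisely the one behind \Cref{prop:commonevec}, applied simultaneously to the finitely many linear forms $\lambda_i - \lambda_j$.
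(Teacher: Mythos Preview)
Your proposal is correct and follows essentially the same approach as the paper: define the set $W$ of $h$'s for which the eigenvalues agree, observe it is a linear subspace, and conclude that a dense (generic) linear subspace must be the whole space. The paper simply asserts that $W$ is a vector subspace and invokes the argument from \Cref{prop:commonevec}, whereas you spell out the linearity of the eigenvalue functionals $\lambda_i$ to justify this explicitly; this extra detail is a welcome elaboration, not a different route.
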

\begin{proof}
Again, the `only if' direction is clear. For the opposite implication, note that 
\[ W =  \{ h \in R_{A_0} ~:~ \text{$v_1, \ldots, v_m$ correspond to the same eigenvalue of $M_h$} \} \]
is a vector subspace of $R_{A_0}$. Since it is also dense by hypothesis, we conclude that $W = R_{A_0}$ as in the proof of Proposition \ref{prop:commonevec}.
\end{proof}

\begin{example}
  [Cont.]

  Instead of the basis $B$ fixed in
  \Cref{ex:basisB}, in what follows we consider $B = \{x,y\}$ and so
  $N_{f_0,B'} = \left[\begin{smallmatrix} 1 & - 1 \\ - 1 &
      3 \end{smallmatrix}\right]$. This way, we obtain the following
  matrices:

    \[\belowdisplayskip=-\baselineskip
      M'_g = N_{g,B'} \, N_{f_0,B'}^{-1} =
      \left[\begin{array}{rr} 2&0\\-\frac{3}{4}&\frac{5}{4}\end{array}\right]
      \qquad
      \text{and}
      \qquad
      M'_x =
      \left[\begin{array}{rr} 1&0\\\frac{1}{4}&\frac{1}{4}\end{array}\right]
    \]

    While $(1,0)$ is a common left eigenvector of $M'_g$ and
    $M'_x$ corresponding to the unique solution of the system, the
    vector $(1,1)$ is a left eigenvector of $M'_g$, but it is not of
    $M'_x$. Comparing this example with \Cref{ex:eigenvectors}, we
    observe that, depending on the choice of the basis $B$, there may be spurious common eigenvectors of $\cal M$.
  \end{example}

Propositions \ref{prop:commonevec} and \ref{prop:multeigenval} give a simple procedure for computing, for a given eigenvalue $\lambda_g$ of $M_g$, the intersection of the corresponding left eigenspace of $M_g$ with the eigenvectors of $\M$. Suppose that this eigenspace is spanned by the rows of the matrix $V_{\lambda_g}$. By \Cref{prop:commonevec}, we simply need to check which elements in the row span of $V_{\lambda_g}$ are also eigenvectors of $M_{h}$, for a random element $h \in R_{A_0}$. Proposition \ref{prop:multeigenval} guarantees that these eigenvectors, if they exist, belong to a unique eigenvalue of $M_{h}$. This is summarized in Algorithm \ref{alg:geteigenspace}. 
\begin{algorithm}[h]
    \caption{\textsc{GetEigenspace}}
    \label{alg:geteigenspace}
    \begin{algorithmic}[1]
    \small
    	\Require{An eigenvalue $\lambda_g$ of $M_g$ for generic $g\in R_{A_0}$, a matrix $V_{\lambda_g}$of size $m \times \gamma$ whose rows contain a basis for the corresponding eigenspace and the matrix $M_{h}$ for a generic $h \in R_{A_0}$}
    	 \Ensure{A matrix ${\cal V}$ whose rows are a basis for the intersection of the row span of $V_{\lambda_g}$ with the eigenvectors of $\M$.}
%     \State $m \gets$ number of rows of $V_{\lambda_g}$
     \If{$m = 1$}
     \If{$\rank \begin{bmatrix} V_{\lambda_g} \\ V_{\lambda_g} \cdot M_{h} \end{bmatrix} = 1$}
     \State ${\cal V} \gets V_{\lambda_g}$
     \Else
     \State ${\cal V} \gets \{0\}$
     \EndIf
     \Else
     \State $O \gets$ random matrix of size $\gamma \times m$
     \State $\{ (\mu_i, C_i) \} \gets$ solve the GEP 
     $c_i \cdot (V_{\lambda_g}
     \cdot M_{h} \cdot O) = \mu_i c_i \cdot (V_{\lambda_g} \cdot
     O)$ \label{line:GEP}
     \State $C \gets$ $C_i$ such that $C_i \cdot V_{\lambda_g}
     \cdot M_{h}  =\mu_i \cdot C_i \cdot V_{\lambda_g}$
     \label{line:C}
     \If{$C$ is empty}
     \State ${\cal V} \gets \{0\}$
     \Else
     \State ${\cal V} \gets C \cdot V_{\lambda_g}$
     \EndIf
          \EndIf
          \State \textbf{return} ${\cal V}$
	\end{algorithmic}
      \end{algorithm}
In line \ref{line:GEP} of the algorithm, we solve the generalized eigenvalue problem (GEP) given by the pencil $(B_1,B_2) := (V_{\lambda_g} \cdot M_{h} \cdot O, V_{\lambda_g} \cdot O)$, that is, we compute all eigenvalues $\mu_i$ and a basis for the left eigenspace $C_i = \{ c_i \in \C^{m}~|~ c_i B_1 = \mu_i c_i B_2 \}$. In line \ref{line:C}, we select (if possible) the unique eigenvalue $\mu_i$ whose corresponding eigenspace $C_i$  gives the desired intersection ${\cal V} = C_i \cdot V_{\lambda_g}$. \Cref{prop:multeigenval} also has the following direct corollary. 
\begin{corollary} \label{cor:uniquetuple}
Let $\lambda_g$ be an eigenvalue of $M_g$ and let ${\cal V} \in \C^{m \times \gamma}$ be a matrix whose rows are a basis for the left eigenspace of $M_g$ corresponding to $\lambda_g$, intersected with the eigenvectors of $\M$.  If $g$ is generic, there is exactly one tuple $(\lambda_\alpha)_{\alpha \in A_0} $ such that
 \[ {\cal V} M_{x^\alpha} = \lambda_\alpha {\cal V}, \quad \text{for all }\alpha \in A_0. \]
\end{corollary}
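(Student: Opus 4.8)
The plan is to recast the statement in terms of linear functionals on $R_{A_0}$ and then combine two ingredients: that $\M$ has only finitely many ``joint eigenvalue functionals,'' and that $g$ is generic. First I would set up the bookkeeping. By \Cref{rmk:linearity}, for a fixed nonzero $v\in\C^{\gamma}$ the set $\M(v)$ is a linear subspace of $\M$ and the rule $h\mapsto(\text{the eigenvalue of }M_h\text{ at }v)$ is the restriction of a linear functional; hence to each eigenvector $v$ of $\M$ we may attach the unique $\ell_v\in R_{A_0}^{*}$ with $v\,M_h=\ell_v(h)\,v$ for every $h\in R_{A_0}$. Write $v_1,\dots,v_m$ for the rows of $\mathcal{V}$: these are eigenvectors of $\M$ and satisfy $\ell_{v_i}(g)=\lambda_g$ for all $i$. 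Reading off rows, the equations $\mathcal{V}\,M_{x^\alpha}=\lambda_\alpha\,\mathcal{V}$ for all $\alpha\in A_0$ are equivalent to $\ell_{v_1}(x^\alpha)=\cdots=\ell_{v_m}(x^\alpha)=\lambda_\alpha$ for all $\alpha$, i.e.\ (the monomials $x^\alpha$, $\alpha\in A_0$, span $R_{A_0}$) to $\ell_{v_1}=\cdots=\ell_{v_m}$; when this holds the unique tuple is $\lambda_\alpha:=\ell_{v_1}(x^\alpha)$, uniqueness being forced by $\mathcal{V}\neq 0$ (so we assume $m\ge 1$, the situation in which the corollary is applied). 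Thus the corollary reduces to proving that $\ell_{v_1},\dots,\ell_{v_m}$ all coincide, which by \Cref{prop:multeigenval} is the same as the $v_i$ sharing an $M_h$-eigenvalue for generic $h$.

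The key step is the claim that $\mathcal{L}:=\{\ell\in R_{A_0}^{*}:\ \exists\,v\neq 0\text{ with }v\,M_h=\ell(h)\,v\text{ for all }h\in R_{A_0}\}$ is finite — and, crucially, $\mathcal{L}$ depends only on $\M$, not on the auxiliary polynomial $g$. To prove finiteness I would fix a basis $h_1,\dots,h_d$ of $R_{A_0}$, so that an $\ell\in\mathcal{L}$ is determined by $(\ell(h_1),\dots,\ell(h_d))$, a joint left eigenvalue of the tuple $(M_{h_1},\dots,M_{h_d})$. For any scalars $c_1,\dots,c_d$ and $h=\sum_j c_j h_j$ we have $M_h=\sum_j c_j M_{h_j}$ by \Cref{rmk:linearity}, so each $\ell\in\mathcal{L}$ yields the eigenvalue $\sum_j c_j\ell(h_j)$ of $M_h$. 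If $\mathcal{L}$ had more than $\gamma$ elements we could choose $(c_j)$ off the finitely many hyperplanes on which two of these sums agree, forcing $M_h$ to have more than $\gamma$ distinct eigenvalues — impossible for a $\gamma\times\gamma$ matrix. (This argument needs no commutativity of the $M_h$.) I expect this finiteness to be the main obstacle; everything else is linear bookkeeping together with a routine genericity argument.

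Finally, I would use the genericity of $g$. Since $\mathcal{L}$ is finite and independent of $g$, for $g$ avoiding the finitely many hyperplanes $\{g : \ell(g)=\ell'(g)\}$ with $\ell\neq\ell'$ in $\mathcal{L}$, the map $\ell\mapsto\ell(g)$ is injective on $\mathcal{L}$. Each $\ell_{v_i}$ lies in $\mathcal{L}$ and satisfies $\ell_{v_i}(g)=\lambda_g$, so all the $\ell_{v_i}$ equal the unique $\ell^{*}\in\mathcal{L}$ with $\ell^{*}(g)=\lambda_g$; hence $\lambda_\alpha:=\ell^{*}(x^\alpha)$ is the asserted tuple, and it is unique by the reduction of the first paragraph. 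The same computation also shows that, for generic $g$, the left eigenspace of $M_g$ at $\lambda_g$ intersected with the eigenvectors of $\M$ coincides with the linear subspace $\{v : v\,M_h=\ell^{*}(h)\,v\ \text{for all }h\}$, so that speaking of ``a basis'' for it in the statement is legitimate.
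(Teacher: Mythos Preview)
Your proof is correct and, in fact, more explicit than the paper's. The paper simply declares the corollary to be a ``direct corollary'' of \Cref{prop:multeigenval} without further argument: since the rows of $\mathcal{V}$ are eigenvectors of $\M$ sharing the eigenvalue $\lambda_g$ of $M_g$ and $g$ is generic, they share eigenvalues for every $M_h$.

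Your route differs in one substantive way: you isolate and prove the finiteness of the set $\mathcal{L}$ of joint eigenvalue functionals (with the clean bound $|\mathcal{L}|\le\gamma$). This is precisely the missing step that makes the genericity rigorous when $\mathcal{V}$ itself depends on $g$. \Cref{prop:multeigenval} controls, for a \emph{fixed} tuple $v_1,\dots,v_m$, the set of $h$ for which they share an eigenvalue; but here the tuple varies with $g$, so one needs that the bad set of $g$ is contained in a \emph{finite} union of hyperplanes $\{\ell(g)=\ell'(g)\}$ over $\ell\neq\ell'$ in $\mathcal{L}$. Your counting argument supplies exactly this, and it needs no commutativity of the $M_h$. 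The paper's one-line invocation of \Cref{prop:multeigenval} is morally the same idea but leaves this finiteness implicit; your version is a self-contained strengthening that also yields the pleasant by-product that, for generic $g$, the intersection in the statement equals the full joint eigenspace $\{v: vM_h=\ell^{*}(h)v\ \text{for all }h\}$.
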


\begin{remark} \label{rem:computeeval}
This has the practical implication that $\tilde{M}= {\cal V} M_{x^\alpha} T ({\cal V} T)^{-1} = \diag(\lambda_\alpha, \ldots, \lambda_\alpha)$ for a random matrix $T \in \C^{\gamma \times m}$ has only one eigenvalue $\lambda_\alpha$, equal to $\text{trace}(\tilde{M})/m$. If ${\cal V}$ has only one row we obtain $\lambda_\alpha$ from the Rayleigh quotient $\lambda_\alpha = {\cal V} M_{x^\alpha} {\cal V}^*/({\cal V} {\cal V}^*)$, where ${}^*$ is the conjugate transpose.
\end{remark}

\begin{proposition}[Criterion for eigenvalues]
  \label{prop:eigenvalcrit}
  Let $\lambda_g$ be an eigenvalue of $M_g$. If
  $\lambda_g = \frac{g}{f_0}(\z)$ for some solution $\z \in \C^n$ of
  $\f$ satisfying $\z^D \neq 0$, %if and only if 
  then the tuple
  $(\lambda_\alpha)_{\alpha \in A_0}$ from \Cref{cor:uniquetuple}
  satisfies
  \[  C \cdot (\lambda_\alpha)_{\alpha \in A_0} = \z^{A_0} \text{ for some } C \in \C \setminus \{0\}. \]
\end{proposition}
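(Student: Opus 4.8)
The plan is to connect the abstract eigenvalue/eigenvector data on the $M_g$-side with the concrete evaluation vector $\z^{A_0}$ via \Cref{thm:evthm} and \Cref{cor:uniquetuple}. First I would invoke \Cref{thm:evthm}: since $\lambda_g = \frac{g}{f_0}(\z)$ for a solution $\z$ with $\z^D \neq 0$, there is a left eigenvector $v_\z$ of $M_g$ with $v_\z \cdot \Cok(\f,\bm{E};D) = \z^D$, and $v_\z$ lies in the $\lambda_g$-eigenspace. I must check that $v_\z$ is in fact an eigenvector of $\M$ (so that it lies in the row span of ${\cal V}$): this follows because for every $\alpha \in A_0$, applying the first part of \Cref{thm:evthm} to $g = x^\alpha$ (or rather, using that $\z^D$ is in the cokernel of $\Mat(x^\alpha, E_0; D)$, exactly as in the proof of \Cref{thm:hasSolNgNonFullRank}) gives $v_\z \cdot N_{x^\alpha, \B} = \z^D \cdot \Mat(x^\alpha, \B; D) = (\z^{\alpha+\beta} f_0(\z) : \beta \in \B)$; wait — more cleanly, $v_\z \cdot M_{x^\alpha} = \frac{x^\alpha}{f_0}(\z)\, v_\z$ by the eigenvalue theorem, so $v_\z$ is a common left eigenvector of all $M_{x^\alpha}$, hence of $\M$ by \Cref{rmk:linearity}. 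Therefore $v_\z$ belongs to the row span of ${\cal V}$: write $v_\z = w \cdot {\cal V}$ for some row vector $w \in \C^m$.

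Next I would extract the eigenvalue tuple. By \Cref{cor:uniquetuple}, ${\cal V} M_{x^\alpha} = \lambda_\alpha {\cal V}$ for the unique tuple $(\lambda_\alpha)_{\alpha \in A_0}$. Multiplying on the left by $w$ gives $w {\cal V} M_{x^\alpha} = \lambda_\alpha\, w {\cal V}$, i.e. $v_\z \cdot M_{x^\alpha} = \lambda_\alpha v_\z$. On the other hand, by \Cref{thm:evthm} applied with $g = x^\alpha$, the eigenvalue of $M_{x^\alpha}$ associated to $v_\z$ is $\frac{x^\alpha}{f_0}(\z) = \frac{\z^\alpha}{f_0(\z)}$ (using $f_0(\z)\neq 0$ from \Cref{thm:propertiesOFh}). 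Comparing, $\lambda_\alpha = \frac{\z^\alpha}{f_0(\z)}$ for every $\alpha \in A_0$; note this uses that $v_\z \neq 0$ so it genuinely pins down the eigenvalue. Hence $(\lambda_\alpha)_{\alpha \in A_0} = \frac{1}{f_0(\z)}\,(\z^\alpha : \alpha \in A_0) = \frac{1}{f_0(\z)}\, \z^{A_0}$ as vectors indexed by $A_0$.

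Finally I would produce the constant $C$. The displayed identity in the proposition reads $C \cdot (\lambda_\alpha)_{\alpha\in A_0} = \z^{A_0}$; from the previous paragraph $(\lambda_\alpha)_{\alpha\in A_0} = f_0(\z)^{-1}\z^{A_0}$, so taking $C = f_0(\z)$ works, and $C \neq 0$ precisely because $f_0(\z)\neq 0$ (\Cref{thm:propertiesOFh}, using $\z^D \neq 0$).

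The only genuinely delicate point — and the step I expect to require the most care — is the claim that $v_\z$ lies in the row span of ${\cal V}$, i.e. that the specific eigenvector coming from the solution is captured by the intersection-with-$\M$-eigenvectors construction rather than discarded as spurious. This hinges on $v_\z$ being a common left eigenvector of \emph{all} $M_h$, $h \in R_{A_0}$, which is exactly the content of the forward direction of \Cref{thm:evthm} (valid for every $g \in R_{A_0}$, in particular every monomial $x^\alpha$), combined with linearity of $g \mapsto M_g$ from \Cref{rmk:linearity}. Once that is in hand, everything else is bookkeeping with \Cref{cor:uniquetuple} and the non-vanishing of $f_0(\z)$.
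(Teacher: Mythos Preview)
Your proof is correct and follows essentially the same approach as the paper: invoke \Cref{thm:evthm} to obtain the eigenvector $v_\z$, verify it is a common eigenvector of $\M$ (so lies in the row span of ${\cal V}$), and then read off $\lambda_\alpha = \z^\alpha/f_0(\z)$ via \Cref{thm:evthm} applied to each $x^\alpha$. You are simply more explicit than the paper in spelling out why $v_\z$ is an eigenvector of all of $\M$ and in identifying $C = f_0(\z)$.
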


\begin{proof}
Let
  ${\cal V} \in \C^{m \times \gamma}$ be a matrix whose rows are a
  basis for the left eigenspace of $M_g$ corresponding to $\lambda_g$
  intersected with the eigenvectors of $\M$.
If $\lambda_g = \frac{g}{f_0}(\z)$ for some solution $\z \in \C^n$ of $\f$ satisfying $\z^D \neq 0$, then by Theorem \ref{thm:evthm} we know that $v_\z$ is a corresponding eigenvector of $\M$. Therefore, there exists $c_\z \in \C^m \setminus \{0\}$ such that $c_\z {\cal V} = v_\z$. Another consequence of Theorem \ref{thm:evthm} is 
\[ c_\z {\cal V} M_{x^\alpha} = \frac{x^\alpha}{f_0}(\z) c_\z {\cal V}. \qedhere \]
\end{proof}

The results discussed above suggest several ways of extracting the coordinates of a solution $\z \in \C^n$ of $\F(x) = 0$ form the eigenstructure of the matrices $M_g$. Both the eigenvectors (Theorem \ref{thm:evthm}) and the eigenvalues (Proposition \ref{prop:eigenvalcrit}) reveal vectors of the form $\z^A$ for some set of exponents $A \subset \N^n$.
We now recall how to compute the coordinates of $\z$ from the vector $\z^A$ and discuss the assumptions that we need on $A$ in order to be able to do this.

For any subset $A \subset \N^n$, we write
 \[ \N A := \{ \sum_{\alpha \in A} n_\alpha \cdot \alpha ~:~ n_\alpha \in \N \} \subset \N^n, \quad  \Z A := \{ \sum_{\alpha \in A} m_\alpha \cdot  \alpha ~:~ m_\alpha \in \Z \} \subset \Z^n.\]
If $A = \{\alpha_1, \ldots, \alpha_k\}$ with $\alpha_1 = 0$ and the condition $\Z A = \Z^n$ is satisfied, then for $\ell = 1, \ldots, n$, there exist integers $m_{2,\ell}, \ldots, m_{k,\ell}$ such that $m_{2,\ell} \alpha_2 + \cdots + m_{k,\ell} \alpha_k = e_\ell$, where $e_\ell$ is the $\ell$-th standard basis vector of $\Z^n$. These integers $m_{j,\ell}$ can be computed, for instance, using the Smith normal form of an integer matrix whose columns are the elements of $A$. If this is the case, from $\z^A = (\z^{\alpha_1}, \ldots, \z^{\alpha_k})$ we can compute the $\ell$-th coordinate $\z_{\ell}$ of $\z$ as
\begin{equation} \label{eq:coordsfromev}
\z_{\ell} = \prod_{j = 2}^k ( \z^{\alpha_j} )^{m_{j,\ell}}, \quad \ell = 1, \ldots, n.
\end{equation}
This approach can be used to compute the coordinates of $\z \in ( \C \setminus \{0\} )^n$ from $\z^A$, i.e. all points with all non-zero coordinates. Note that some of the $m_{j,\ell}$ may be negative, which may be problematic in the case where $\z$ has zero coordinates. If the stronger condition $\N A_0 = \N^n$ is satisfied (this implies $e_\ell \in A$, $\ell = 1, \ldots, n$), then the integers $m_{j,\ell}$ can be taken non-negative and we can obtain the coordinates of all points $\z$ in $\C^n$ from $\z^A$. We will continue under the assumption that we are mostly interested in computing points in $( \C \setminus \{0\} )^n$, as this is commonly assumed in a sparse setting. However, solutions in $\C^n$ can be computed by replacing $\Z A = \Z^n$ in what follows by the stronger assumption $\N A = \N^n$. Note that if $\Z A = \Z^n$, the outlined approach suggests a way of checking whether or not a vector $q \in \C^{\#A}$ with $q_{\alpha_1}=1$ is of the form $\z^A$ for some $\z \in ( \C \setminus \{0\} )^n$. Indeed, one computes the coordinates $\z_\ell = \prod_{j=2}^k (q_{\alpha_j})^{m_{j,\ell}}$ and checks whether $\z^A = q$.

We turn to the eigenvalue method for extracting the roots $\z$ from  the matrices $M_g$. 
Let $\{ \z_1, \ldots, \z_\D \} \subset \C^n$ be a set of solutions of $\f$ such that $\z_i^D \neq 0$ for all $i$. By Theorem \ref{thm:evthm}, for each of these solutions there is an eigenvalue $\lambda_g$ of the matrix $M_g$ and a space of dimension $m \geq 1$, spanned by the rows of a matrix ${\cal V}$, of eigenvectors of $\M$. Suppose we have computed this matrix ${\cal V}$ (for instance, using Algorithm \ref{alg:geteigenspace}).
We write $A_0 = \{\alpha_1, \ldots, \alpha_k\} \subset \N^n$ and assume that $\alpha_1 = 0$. The unique eigenvalue (Corollary \ref{cor:uniquetuple}) of $M_{x^{\alpha_j}}$ corresponding to ${\cal V}$ is denoted by $\lambda_{ij}$ and can be computed using Remark \ref{rem:computeeval}.
As $\zeta^D \neq 0$, by \Cref{thm:evthm}, there is $v \in \cal V$
such that $v \, \cdot \textsc{Coker}(F, E, D) = C \cdot \zeta_i^D$, for non-zero
$C \in \C$. Therefore, by \Cref{thm:evthm},
\begin{equation} \label{eq:evs}
\lambda_{ij} = \frac{\z_i^{\alpha_j}}{f_0(\z_i)} \quad \text{and hence} \quad \frac{\lambda_{ij}}{\lambda_{i1}} = \z_i^{\alpha_j}, \quad i = 1, \ldots, \D, j = 1, \ldots, k.
\end{equation}
 We would like to recover the coordinates of $\z_i$ from the tuple $(\z_i^{\alpha_2}, \ldots, \z_i^{\alpha_k}) \in \C^{k-1}$. Assuming $\Z A_0 = \Z^n$ and applying \eqref{eq:coordsfromev}, we find 
 \begin{equation}
\z_{i,\ell} = \prod_{j = 2}^k \left( \frac{\lambda_{ij}}{\lambda_{i1}} \right )^{m_{j,\ell}}, \quad \ell = 1, \ldots, n.
\end{equation}
\begin{remark}
In many cases, one can take $A_0 = \{0, e_1, \ldots, e_n \}$, in which case $m_{j,\ell} = 1$ if $j = \ell + 1$ and $m_{j,\ell} = 0$ otherwise.
\end{remark}
Motivated by this discussion, we make the following definition. 
\begin{definition}
  \label{def:condSolv}
  We say that a tuple
  $(\F = (f_1,\dots,f_s),A_0,(E_0,{\bm E}) = (E_0,\dots,E_s),D)$ is
  \emph{admissible} if it satisfies the following three conditions,
    \begin{itemize}
    \item \textbf{Compatibility condition:} For $i = 0, \ldots, s$,
      $A_i + E_i \subset D$.
    \item \textbf{Rank condition:} There exists
      $f_0 \in R_{A_0}$ such that
      $\rank(N_{f_0}) = \HF(\F,{\bm E};D)$.  
    \item \textbf{Lattice condition:} The set $A_0$ satisfies
      $0 \in A_0$ and $\Z A_0 = \Z^n$.
    \end{itemize}
\end{definition}

The results in this section lead to Algorithm \ref{alg:solve} for
solving $\F(x) = 0$, given an admissible tuple
$(\f,A_0,(E_0,\bm{E}),D)$.
This algorithm computes a candidate set of solutions containing
every solution in $(\C \setminus \{0\})^n$.
It might contain spurious points, since there might be eigenvalues that do not correspond to solutions but do come from a common eigenvector of ${\cal M}$, see Example \ref{ex:notMultMap}. One can identify these points, for instance, by evaluating the \emph{relative backward error}, see \Cref{eq:BWE}.
In what follows, we discuss some aspects of the algorithm in more
detail. In practice, the number of columns $\sum_{i = 1}^s \# E_i $ of the
Macaulay matrix $\Mat(\f,\bm{E};D)$ is often much larger than the
number $\# D$ of rows. Multiplying from the right by a random matrix
of size $( \sum_{i = 1}^s \# E_i ) \times \#D$ does not affect the
left nullspace, but reduces the complexity of computing it. This is
what happens in line \ref{line:compress}. See
\cite[Sec.~4.2]{mourrain2019truncated} for details. If
$( \sum_{i = 1}^s \# E_i ) \lesssim \#D$, that is, the number of
columns is not much larger than the number of rows, this step can be
skipped.

\begin{remark}
  By the lattice condition, we have that
  $1 \in R_{A_0}$. However, the rank condition might not be satisfied for $f_0 = 1$. That is, it might happen that
  $\rank(N_{1}) < \Cok(\F,{\bm E};D)$.
 This is the case, for instance
 , in \Cref{ex:nf0}.
  To overcome this issue, we choose $f_0$ randomly in $R_{A_0}$.
\end{remark}

\paragraph{\small Numerical considerations.}
In theory, we may pick $\B$ arbitrary such that $N_{f_0,\B}$ is an
invertible matrix. In practice, \emph{it is crucial to pick $\B$ such
  that $N_{f_0,\B}$ is well-conditioned}. This was shown in
\cite{telen2018solving,telen2018stabilized}. For that, we select a
random $f_0$ and, in line \ref{line:qr}, we use a standard numerical
linear algebra procedure for selecting a well-conditioned submatrix
from $N_{f_0}$: \emph{QR factorization with optimal column
  pivoting}. This computes matrices $Q_0,R_0$ and a permutation
$p = (p_1, \ldots, p_{\# E_0})$ of the columns of $N_{f_0}$ such that
$N_{f_0}[:,p] = Q_0R_0$, where $Q_0 \in \C^{\gamma \times \gamma}$ is
a unitary matrix, $R_0 \in \C^{\gamma \times \# E_0}$ is upper
triangular and $N_{f_0}[:,p]$ is $N_{f_0}$ with its columns permuted
according to $p$.
The leftmost $\gamma$ columns of $R_0$ form the square, upper
triangular matrix $\hat{R}_0$. The column permutation $p$ is such that
columns $p_1, \ldots, p_\gamma$ form a well-conditioned submatrix of
$N_{f_0}$. In line \ref{line:basischoice}, these columns are selected
to form the matrix $N_{f_0,\B}$. Using the identities
$N_{f_0,\B}^* M_{g}^* = N_{g,\B}^*$
, where ${}^*$ is the conjugate transpose, 
and $N_{f_0,\B} = Q_0 \hat{R}_0$,
we see that the solution $Q_0^*M_{g}^*Q_0$ to the linear system
$\hat{R}_0^* X = N_{g,\B}^*Q_0$ is similar to the matrix $M_g^*$ in
this section, and it can be obtained by back substitution since
$R_0^*$ is lower triangular. Since we extract the coordinates of the
roots form the eigenvalues, not the eigenvectors, we may work with
$Q_0^*M_{g}^*Q_0$ as well. This is exploited in line
\ref{line:linsys}. In line \ref{line:otheralg}, we invoke Algorithm
\ref{alg:geteigenspace}. Lines \ref{line:startif}-\ref{line:endif} are
a straightforward implementation of Remark \ref{rem:computeeval}. As
pointed out, in the case $m = 1$, $\lambda_{ij}$ can alternatively be
computed as a Rayleigh quotient.
  \begin{algorithm}[h!]
    \caption{\textsc{Solve}}
    \label{alg:solve}
    \begin{algorithmic}[1]
    \small
      \Require{An admissible tuple $(\f,A_0,(E_0,\bm{E}),D)$ with $A_0 = \{ \alpha_1 = 0, \alpha_2, \ldots, \alpha_k\}$}
      \Ensure{A \emph{candidate} set of solutions of $\f$, containing all solutions in $(\C \setminus \{0\})^n$}
      \State{ $O \gets $ random matrix of size $( \sum_{i = 1}^s \# E_i ) \times \#D$ }
      \State{ $\textrm{MO} \gets \Mat(\f,\bm{E};D) \cdot O $ } \label{line:compress}
      \State{ Compute $\Cok(\f,\bm{E};D)$ via the SVD of
        $\textrm{MO}$} \label{line:leftnull}
      \State{$f_0 \gets$ random element in $R_{A_0}$ }
      \State{
        $N_{f_0} \leftarrow $ matrix of size $\gamma \times (\# E_0)$ given by $\Cok(\f,\bm{E};D) \cdot
        \Mat(f_0,E_0;D).$}
      \State{$Q_0,R_0,p \gets$ apply QR decomposition with optimal pivoting to $N_{f_0}$} \label{line:qr}
      \State{$\hat{R}_0 \gets$ square, upper triangular matrix given by the first $\gamma$ columns of $R_0$}
      \State{$\B \leftarrow $ exponents in $E_0$ corresponding to columns $p_1, \ldots, p_\gamma$ of $N_{f_0}$} \label{line:basischoice}
     \For{$j = 1, \ldots, k$}
 \State{
        $N_{x^{\alpha_j},\B} \leftarrow \Cok(\f,\bm{E};D) \cdot
        \Mat(x^{\alpha_j},\B;D)$}
      \State{$M_{x^{\alpha_j}}^* \gets $ solve $\hat{R}_0^* X = N_{x^{\alpha_j},\B}^*Q_0$ for $X$ by back substitution} \label{line:linsys}
          \EndFor
          \State $M_g \gets$ random linear combination of $M_{x^\alpha}, \alpha \in A_0$
          \State $\{ (\mu_1, V_{\mu_1}), \ldots, (\mu_\D, V_{\mu_\D})\} \gets$ distinct eigenvalues of $M_g$ and corresponding left eigenspaces
          \State $M_{h} \gets$ a different random linear combination of $M_{x^\alpha}, \alpha \in A_0$
          \State $Z \gets \{\}$
          \For{$i = 1, \ldots, \D$}
          \State ${\cal V} \gets$ \textsc{getEigenspace}($\mu_i,V_{\mu_i},M_{h}$) \label{line:otheralg}
          \If{${\cal V} \neq \{0\}$} \label{line:startif}
          \State $m \gets$ number of rows of ${\cal V}$
          \State $T \gets$ random matrix of size $\gamma \times m$
                    
          \For{$j = 1, \ldots, k$} \label{line:computeLambda}
          \State $\tilde{M} \gets {\cal V} M_{x^{\alpha_j}} T ({\cal V}T)^{-1}$
          \State $\lambda_{ij} \gets \text{trace}(\tilde{M})/m$ \hfill (when $m = 1$, use Rayleigh quotient, see \Cref{rem:computeeval})   \label{line:computeLambdaij}
          \EndFor
          \State $\z_i \gets$ if possible, compute the coordinates of $\z_i$ via \eqref{eq:coordsfromev} and \eqref{eq:evs}
          \State $ Z \gets  Z \cup \{\z_i\}$
          \EndIf \label{line:endif}
          \EndFor
          \State \textbf{return} $Z$
	\end{algorithmic}
      \end{algorithm}

      \begin{remark}
        Alternatively, by \Cref{thm:evthm}, when ${\cal V}$ is
        one-dimensional, we may check if, for a vector $v \in {\cal V}$,
        there is a non-zero constant $C$ and $\z \in \C^n$ such that
        $C \cdot \z^D = v_\z \cdot \Cok(\f,\bm{E};D)$. If $0 \in D$ and
        $\Z D = \Z^n$, we scale $v$ such that $C = 1$ and find $\z$ from
        $\z^D$ as above. When the matrices $M_g$ are multiplication
        operators, this approach is usually referred as the \emph{eigenvector
          criterion} \cite{auzinger_elimination_1988}.
        This idea can be extended to the case where $\cal V$ has dimension $> 1$. Extracting vectors of the form $\z^D$ from a vector space can be viewed as a \emph{harmonic retrieval} problem, see \cite[Sec.~3.3]{vanderstukken2017systems}.
  
      \end{remark}

      \begin{theorem}
        [Correctness]

        Algorithm~\ref{alg:solve} computes a set of points containing every solution of
        $\f$ in the algebraic torus $(\C \setminus \{0\})^n$.
      \end{theorem}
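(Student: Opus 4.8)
The plan is to fix a generic choice of all the random data used in Algorithm~\ref{alg:solve} --- the compression matrix $O$ (line~\ref{line:compress}), the polynomials $f_0$, $g$, $h$, the auxiliary matrices $T$, and the internal randomness of Algorithm~\ref{alg:geteigenspace} --- and then to check that for every solution $\z \in (\C\setminus\{0\})^n$ of $\f$, the point $\z$ appears in the returned set $Z$. Since the input tuple is admissible, the rank condition together with the linearity of $f_0 \mapsto N_{f_0}$ and the genericity remark following \Cref{thm:propertiesOFh} shows that a generic $f_0 \in R_{A_0}$ satisfies $\rank(N_{f_0}) = \gamma := \HF(\f,\bm{E};D)$; hence the QR step in line~\ref{line:qr} does pick a subset $\B \subset E_0$ with $N_{f_0,\B}$ invertible. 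Also, right multiplication of the Macaulay matrix by a generic $O$ does not change its left nullspace, so line~\ref{line:leftnull} returns a valid $\Cok(\f,\bm{E};D)$. Finally, using $N_{f_0,\B} = Q_0\hat{R}_0$ and $N_{f_0,\B}^* M_g^* = N_{g,\B}^*$, the matrices produced in line~\ref{line:linsys} are unitarily similar to the matrices $M_{x^{\alpha_j}}$ of \eqref{eq:defMg}, so I may argue throughout as if the algorithm operated on the matrices $M_g$ of \eqref{eq:defMg} directly.

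Next I would fix a solution $\z \in (\C\setminus\{0\})^n$. Every monomial $\z^\alpha$ is nonzero, so $\z^D \neq 0$ and $\z^{E_i} \neq 0$ for all $i$, and \Cref{thm:evthm} applies: for each $\alpha \in A_0$, the matrix $M_{x^\alpha}$ has the left eigenvector $v_\z$ with $v_\z \cdot \Cok(\f,\bm{E};D) = \z^D$ and eigenvalue $\frac{x^\alpha}{f_0}(\z)$ (here $f_0(\z) \neq 0$ by \Cref{thm:propertiesOFh}); in particular $v_\z$ is an eigenvector of $\M$. Thus $\lambda_g := \frac{g}{f_0}(\z)$ is one of the eigenvalues $\mu_i$ of $M_g$ computed by the algorithm, and $v_\z$ lies in the row span of the corresponding $V_{\mu_i}$. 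By \Cref{prop:commonevec} and \Cref{prop:multeigenval}, applied with the generic $h$, \textsc{GetEigenspace} returns a matrix ${\cal V}$ whose rows span the intersection of that row span with the eigenvectors of $\M$; since $v_\z$ lies in this intersection, $v_\z$ is in the row span of ${\cal V}$.

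Finally, I would recover $\z$ from ${\cal V}$. By \Cref{cor:uniquetuple}, there is a unique tuple $(\lambda_\alpha)_{\alpha\in A_0}$ with ${\cal V} M_{x^\alpha} = \lambda_\alpha {\cal V}$; evaluating on the row $v_\z$, which is a left eigenvector of $M_{x^\alpha}$ with eigenvalue $\frac{x^\alpha}{f_0}(\z)$, forces $\lambda_\alpha = \frac{x^\alpha}{f_0}(\z)$ for all $\alpha \in A_0$. The value $\lambda_{ij}$ computed in line~\ref{line:computeLambdaij} is exactly this unique eigenvalue of $M_{x^{\alpha_j}}$ on the row span of ${\cal V}$ (equal to $\text{trace}(\tilde{M})/m$ by \Cref{rem:computeeval}), so $\lambda_{ij} = \frac{\z^{\alpha_j}}{f_0(\z)}$ and, since $\alpha_1 = 0$, $\lambda_{ij}/\lambda_{i1} = \z^{\alpha_j}$ as in \eqref{eq:evs}. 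The lattice condition $\Z A_0 = \Z^n$ then supplies integers $m_{j,\ell}$ with $\z_\ell = \prod_{j=2}^{k} (\z^{\alpha_j})^{m_{j,\ell}}$ via \eqref{eq:coordsfromev}; as every $\z^{\alpha_j}$ is nonzero, this product is well defined even when some $m_{j,\ell}$ are negative, it reproduces $\z$, and the monomial vector $\z^{A_0}$ matches $(\lambda_{ij}/\lambda_{i1})_j$, so the ``if possible'' step succeeds and $\z$ is added to $Z$. Since the theorem only asserts that $Z$ contains all torus solutions, I do not need to discard the spurious points mentioned after \Cref{def:condSolv}.

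The step I expect to be the main obstacle is the genericity bookkeeping: one must verify that each of the several ``for a generic choice'' hypotheses used above fails only on a proper Zariski-closed subset --- that a generic $f_0$ meets the rank condition, that a generic $g$ separates eigenspaces finely enough for \Cref{cor:uniquetuple}, that a generic $h$ satisfies \Cref{prop:commonevec} and \Cref{prop:multeigenval}, that a generic $O$ preserves the left nullspace and keeps the pencils in Algorithm~\ref{alg:geteigenspace} well posed, and that a generic $T$ makes ${\cal V}T$ invertible --- so that all of them hold simultaneously off a set of measure zero in the random data. The only genuinely algebraic point, rather than bookkeeping, is the unitary-similarity claim of the first paragraph, which brings the numerically convenient matrices of lines~\ref{line:qr}--\ref{line:linsys} back to the matrices $M_g$ of \eqref{eq:defMg} on which all the earlier results were proved.
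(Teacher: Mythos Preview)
Your proposal is correct and follows essentially the same route as the paper: fix a torus solution $\z$, use \Cref{thm:evthm} to obtain the eigenpair $(v_\z,\frac{g}{f_0}(\z))$, argue that $v_\z$ lies in the row span of ${\cal V}$ returned by \textsc{GetEigenspace}, read off $\lambda_{ij}=\frac{\z^{\alpha_j}}{f_0(\z)}$, and recover the coordinates via the lattice condition. The only cosmetic difference is that the paper packages your ``evaluate ${\cal V}M_{x^\alpha}=\lambda_\alpha{\cal V}$ on the row $v_\z$'' step into \Cref{prop:eigenvalcrit}, whereas you invoke \Cref{cor:uniquetuple} and redo that one-line computation; your version is also more explicit than the paper about the unitary-similarity reduction from the numerically computed matrices to the $M_g$ of \eqref{eq:defMg} and about the various genericity conditions.
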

      
      \begin{proof}
        As our input is an admissible tuple, the compatibility
        condition implies that the the matrix $N_{f_0}$ is
        well-defined. By the rank condition and the fact that $f_0$ is generic, $N_{f_0}$ is
        has full rank. See the discussion below
        \Cref{thm:propertiesOFh}. Hence, the matrices $M_g$ and $M_h$
        are well-defined and agree with the ones defined in
        \eqref{eq:defMg}.
        Let $\z_1$ be a solution of $\f$ such that
        $\zeta_1 \in (\C \setminus \{0\})^n$.
        As $\z_1^D \neq 0$, by \Cref{thm:evthm}, we can assume with no
        loss of generality that $\mu_1 = \frac{g}{f_0}(\z_1)$.
        Let
        ${\cal V} := \textsc{getEigenspace}(\mu_1,V_{\mu_1},M_{h})$,
        for generic $h \in R_{A_0}$. As $h$ is generic, by
        \Cref{prop:multeigenval}, all vectors in $\cal V$ belong to the same eigenvalue of $M_{x^{\alpha_j}}$, for $j = 1, \ldots, k$.
        Hence, by \Cref{prop:eigenvalcrit}, there is a non-zero
        constant $C \in \C$ such that the element $\lambda_{1,j}$
        computed in line \ref{line:computeLambdaij} agrees with
        $C \, \zeta_1^{\alpha_j}$, for $\alpha_j \in A_0$.
        Observe that, as $\zeta_1 \in (\C \setminus \{0\})^n$,
        $\lambda_{1,j} \neq 0$.
        Therefore, as the admissible tuple satisfies the lattice
        condition $\Z \, A_0 = \Z^n$, we can recover the
        coordinates of $\zeta_1$ using \eqref{eq:evs} and
        $\zeta_1 \in Z$.
      \end{proof}

      \begin{remark}
        By adapting the previous proof, we observe that
        algorithm~\ref{alg:solve} computes every solution $\zeta$ of
        $\f$ such that $\zeta^D \neq 0$ and the formula given in
        \eqref{eq:evs} is well-defined. That is, for every
        $j \in \{1,\dots,k\}$ and $\ell \in \{1,\dots,n\}$,
        $m_{j,\ell} \geq 0$ or $\zeta^{\alpha_j} \neq 0$.
      \end{remark}

It is clear that the size of the matrices in Algorithm \ref{alg:solve}
depends on the cardinality of the exponent sets in the admissible
tuple. Constructing admissible tuples for certain families of
polynomial systems is an active field of research, strongly related to
the study of \emph{regularity} of ideals in polynomial rings, in the
sense of commutative algebra
\cite[Sec.~20.5]{eisenbud_commutative_2004}. Recent progress in this
area, for the case where $n = s$, was made in
\cite{bender_toric_2020}.
In the next section, we will summarize some of these results by
explicitly describing some admissible tuples for systems with
important types of structures.

As mentioned above, the matrices $M_g$
considered in this section play the role of
\emph{multiplication operators} in the algebra $R/I$, where $I$ is the
ideal generated by the polynomials in $\F$
\cite[Ch. 2]{cox_using_2005}. In the very general setting we consider
here, assuming only that $(\F, A_0, (E_0,{\bm E}), D)$ is an
admissible tuple, the matrices $M_g$ do not necessarily represent such
multiplication operators. However, under some extra assumptions, they
do commute. In this case, we can simplify \Cref{alg:solve} by
computing the simultaneous Schur factorization of
$(M_{x^\alpha})_{\alpha \in A_0}$ as in
\cite[Sec.~3.3]{bender_toric_2020}.
      \begin{theorem}[Criterion for commutativity] \label{thm:critCommute}
    Let $(\f, A_0, (E_0, \bm{E}), D)$ be an admissible tuple and $\gamma = \HF(\f,\bm{E};D)$. Let $f_0$ be such that $N_{f_0}$ satisfies the Rank condition (Definition \ref{def:condSolv}). If
        \[\HF((f_0^2,\f),(E_0,E_1+A_0,\dots,E_s+A_0);D+A_0) -
        \HF(\f,(E_1+A_0,\dots,E_s+A_0);D+A_0) = \gamma,\]
        then for every $g_1,g_2 \in R_{A_0}$ and every valid choice of $B \subset A_0$, we have that 
        $M_{g_1} \, M_{g_2} = M_{g_2} \, M_{g_1}$.
      \end{theorem}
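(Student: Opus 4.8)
The plan is to reinterpret both Hilbert-function quantities appearing in the statement in terms of coranks of Sylvester maps, and then to identify the hypothesis as the statement that a certain enlarged Macaulay matrix has exactly the corank one would predict if the $M_g$ came from genuine multiplication operators. The first step is to set up the notation: let $\bm{E}^+ := (E_1+A_0,\dots,E_s+A_0)$ and $D^+ := D+A_0$, so that the hypothesis reads $\HF((f_0^2,\f),(E_0,\bm{E}^+);D^+) - \HF(\f,\bm{E}^+;D^+) = \gamma$. I would first observe, exactly as in the proof of \Cref{thm:propertiesOFh}, that
\[
\begin{pmatrix} N^+_{f_0^2} & 0 \end{pmatrix} = \Cok(\f,\bm{E}^+;D^+) \cdot \Mat((f_0^2,\f),(E_0,\bm{E}^+);D^+),
\]
where $N^+_{f_0^2} := \Cok(\f,\bm{E}^+;D^+) \cdot \Mat(f_0^2,E_0;D^+)$. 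Hence the hypothesis is equivalent to $\rank(N^+_{f_0^2}) = \HF(\f,\bm{E}^+;D^+)$, i.e. $f_0^2$ satisfies the Rank condition for the enlarged tuple $(\f,A_0,(E_0,\bm{E}^+),D^+)$. The point of enlarging $\bm{E}$ by $A_0$ is that products $g_1 g_2$ with $g_1, g_2 \in R_{A_0}$ now land in $R_{2A_0} \subset R_{A_0 + A_0}$ and can be "divided twice" through the cokernel.

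The second and central step is to exhibit, for the enlarged tuple, matrices $M^+_g$ defined exactly as in \eqref{eq:defMg} — with $N^+_{f_0,\B}$ now invertible of size $\gamma \times \gamma$ since $\gamma = \HF(\f,\bm{E}^+;D^+)$ by the hypothesis together with $\HF(\f,\bm{E};D) = \gamma$ and the fact that enlarging $\bm{E}$ by $A_0$ and $D$ by $A_0$ does not change the corank once the Rank condition holds for $f_0$ (this needs a short argument: $\Cok(\f,\bm{E};D)$ and $\Cok(\f,\bm{E}^+;D^+)$ are related by restriction/inflation, and $N_{f_0}$ full rank forces these coranks to agree; I would spell this out using that every element of $\Cok(\f,\bm{E}^+;D^+)$ restricts to an element of $\Cok(\f,\bm{E};D)$ after projecting onto the $D$-coordinates, and conversely multiplying by $f_0$). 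Granting this, one shows $M^+_g = M_g$ for all $g \in R_{A_0}$, because the defining linear-algebra identities (the relation $N_{f_0,\B}^* M_g^* = N_{g,\B}^*$ from the numerical-considerations paragraph, equivalently $M_g N_{f_0,\B} = N_{g,\B}$) pull back verbatim. Then the commutativity $M_{g_1} M_{g_2} = M_{g_2} M_{g_1}$ is proved by computing $M_{g_1} M_{g_2}$ through the enlarged cokernel: one checks
\[
M_{g_1} M_{g_2} N_{f_0,\B}^{+} N_{f_0,\B}^{+} = N^+_{g_1 g_2, \B},
\]
where $N^+_{g_1 g_2,\B} := \Cok(\f,\bm{E}^+;D^+)\cdot\Mat(g_1 g_2,\B;D^+)$ is manifestly symmetric in $g_1, g_2$; since $N_{f_0,\B}^+$ is invertible, this forces $M_{g_1}M_{g_2} = N^+_{g_1g_2,\B}(N_{f_0,\B}^+)^{-2}$, an expression symmetric in $g_1$ and $g_2$, hence $M_{g_1}M_{g_2} = M_{g_2}M_{g_1}$. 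The identity $M_{g_1} M_{g_2} N^+_{f_0,\B} N^+_{f_0,\B} = N^+_{g_1 g_2,\B}$ itself should follow by two applications of the basic relation $M_g N_{f_0,\B} = N_{g,\B}$, once noting that $g_2 f_0$, as an element of $R_{A_0+A_0} \subset R_{\bm{E}^+\text{-compatible degree}}$, can be absorbed into the Sylvester map for the enlarged exponent sets; the role of $f_0^2$ having full rank is precisely to guarantee that "multiplying twice by $f_0$ and dividing twice" is a faithful operation, i.e. that no kernel is introduced.

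The main obstacle I expect is the bookkeeping in the second step: carefully justifying that the cokernel of the enlarged Macaulay matrix has the same dimension $\gamma$ and that the enlarged multiplication-type matrices $M^+_g$ literally coincide with the original $M_g$, rather than merely being conjugate. This requires tracking how $\Cok(\f,\bm{E};D)$ sits inside $\Cok(\f,\bm{E}^+;D^+)$ via the natural inclusion $R_D \hookrightarrow R_{D^+}$ (padding by zeros) and its partial inverse, and checking that the column selection $\B \subset A_0 \subset E_0$ is compatible on both sides — in particular that $N^+_{f_0,\B}$ is invertible, which is where the hypothesis $\HF((f_0^2,\f),\dots;D^+) - \HF(\f,\bm{E}^+;D^+)=\gamma$ does its real work. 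Once this identification is in place, the symmetry argument for commutativity is a short formal computation along the lines sketched above, entirely parallel to the proofs of \Cref{thm:hasSolNgNonFullRank} and \Cref{lem:nonzB}.
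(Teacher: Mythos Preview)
Your overall strategy---pass to the enlarged data $(\bm E^+,D^+)=(\bm E+A_0,D+A_0)$ and exhibit a matrix expression for $M_{g_1}M_{g_2}$ that is manifestly symmetric in $g_1,g_2$---is exactly what the paper does. Your reading of the hypothesis is also correct (indeed, under the Rank condition one can show $R_{D+A_0}=f_0^2 R_{E_0}+I_{D+A_0}$, so $\HF((f_0^2,\f),(E_0,\bm E^+);D^+)=0$ and the hypothesis reduces to $\HF(\f,\bm E^+;D^+)=\gamma$). However, the central identity you write,
\[
M_{g_1}M_{g_2}\,\bigl(N^{+}_{f_0,\B}\bigr)^2 \;=\; N^{+}_{g_1g_2,\B},
\]
is not what two applications of $M_g N_{f_0,\B}=N_{g,\B}$ give you: after one step you have $M_{g_1}M_{g_2}N^{+}_{f_0,\B}=M_{g_1}N^{+}_{g_2,\B}$, and there is no relation letting you trade $M_{g_1}N^{+}_{g_2,\B}$ for $N^{+}_{g_1g_2,\B}(N^{+}_{f_0,\B})^{-1}$. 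The matrix $(N^{+}_{f_0,\B})^2$ has no interpretation as ``multiply twice by $f_0$''.

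The paper fixes this by working with the conjugates $\tilde M_g:=N_{f_0,\B}^{-1}M_gN_{f_0,\B}$ and with the image spaces $I_D=\Im\Sylv_{(\f,\bm E;D)}$, $I_{D+A_0}=\Im\Sylv_{(\f,\bm E^+;D^+)}$ directly. One checks $g\,(v\!\cdot\!\B)\equiv f_0\,((\tilde M_g v)\!\cdot\!\B)\pmod{I_D}$, and iterating once (this is where $I_{D+A_0}\supset R_{A_0}I_D$ is used) gives
\[
g_1g_2\,(v\!\cdot\!\B)\;\equiv\;f_0^2\,\bigl((\tilde M_{g_2}\tilde M_{g_1}v)\!\cdot\!\B\bigr)\pmod{I_{D+A_0}}.
\]
In your matrix language this is $N^{+}_{g_1g_2,\B}=N^{+}_{f_0^2,\B}\,\tilde M_{g_2}\tilde M_{g_1}$, with $N^{+}_{f_0^2,\B}$ (not $(N^{+}_{f_0,\B})^2$) on the left. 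The hypothesis is precisely that $\{f_0^2 x^{b}:b\in\B\}$ are linearly independent modulo $I_{D+A_0}$, i.e.\ that $N^{+}_{f_0^2,\B}$ has a left inverse; cancelling it yields $\tilde M_{g_2}\tilde M_{g_1}=\tilde M_{g_1}\tilde M_{g_2}$ and hence $M_{g_1}M_{g_2}=M_{g_2}M_{g_1}$. So the gap in your argument is the wrong ``square'': replace $(N^{+}_{f_0,\B})^2$ by $N^{+}_{f_0^2,\B}$ and derive the identity in the quotient rather than from the bare relation $M_gN_{f_0,\B}=N_{g,\B}$; once you do that, the auxiliary claims about $M^{+}_g=M_g$ and matching coranks become unnecessary.
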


      \begin{proof} 
        In what follows, we fix two vector spaces
        $I_D := \Im(\Sylv_{(\f,(E_1,\dots,E_s);D)})$ and
        $I_{D+A_0} := $ \linebreak
        $\Im(\Sylv_{(\f,(E_1+A_0,\dots,E_s+A_0);D+A_0)})$.  Observe
        that, for every $g \in R_{A_0}$ and $f \in I_D$,
        $g \, f \in I_{D+A_0}$. We write $\B = \{b_1,\dots,b_\gamma\}$
        and given $v \in \C^\gamma$, we set
        $v \cdot \B := \sum_i v_i \, x^{b_i}$.
    
    In this proof, for each $g \in R_{A_0}$, we consider the map
$\tilde{M_g} := N_{f_0,\B}^{-1} \cdot M_g \cdot N_{f_0,\B}$.
    The maps $M_g$ and $\tilde{M_g}$ are similar, so it is enough to
prove that $\tilde{M}_{g_1} \, \tilde{M}_{g_2} = \tilde{M}_{g_2} \,
\tilde{M}_{g_1}$.
It is not hard to show that for $v,w \in \C^\gamma$, such that
$\tilde{M}_g(v) = w \in \C^\gamma$, we have
$g \, (v \cdot \B) \equiv f_0 \, (w \cdot \B)$ modulo $I_D$.

First, observe that, for every $v \in \C^\gamma$,
$g_1 \, g_2 \, (v \cdot \B) \equiv f_0^2 \, ((\tilde{M}_{g_1} \,
\tilde{M}_{g_2} \, v) \cdot B)$ modulo $I_{D+A_0}$. Indeed,
$g_1 \, (v \cdot \B) = f_0 \, ((\tilde{M}_{g_1} \, v) \cdot B) +
h_{1}$, for $h_1 \in I_{D}$ and for $w = \tilde{M}_{g_1} \, v$, we
have that
$g_2 \, (w \cdot \B) = f_0 \, ((\tilde{M}_{g_2} \, w) \cdot B) +
h_{2}$, for $h_2 \in I_{D}$.  Hence,
$g_1 \, g_2 \, (v \cdot \B) = g_2 \, f_0 \, ((\tilde{M}_{g_1} \, v)
\cdot B) + g_2 \, h_1 = f_0^2 \, ((\tilde{M}_{g_2} \, \tilde{M}_{g_1}
\, v) \cdot B) + f_0 \, h_2 + g_2 \, h_1$. As
$f_0 \, h_2 + g_2 \, h_1 \in I_{D + A_0}$, the claim follows.
Since $g_1g_2 = g_2g_1$, it also holds that 
$g_1 \, g_2 \, (v \cdot \B) \equiv f_0^2 \, ((\tilde{M}_{g_2} \,
\tilde{M}_{g_1} \, v) \cdot B)$ modulo $I_{D+A_0}$.

Second, we show that $\{ f_0^2 \, x^{b_i} : b_i \in \B\}$ is a basis of
the vector space $V$ spanned by $\{ f_0^2 \, x^{e} : e \in E_0\}$
modulo $I_{D+A_0}$.
By construction of $\B\subset E_0$, $\{ f_0 \, x^{b_i} : b_i \in \B\}$
is a basis of the vector space spanned by
$\{ f_0 \, x^{e} : e \in E_0\}$ modulo $I_{D}$, so
$\{ f_0^2 \, x^{b_i} : b_i \in \B\}$ generates $V$.
Moreover, by the assumption on the difference of coranks, the dimension of the vector space $V$ is
$\gamma = \# \{f_0^2 \, x^{b_i} : b_i \in \B\}$.

By the first observation, we have that
$f_0^2 \, \left(\left(\left(\tilde{M}_{g_1} \, \tilde{M}_{g_2} -
      \tilde{M}_{g_2} \, \tilde{M}_{g_1}\right) \, v \right) \cdot
  \B\right) \equiv 0$ modulo $I_{D+A_0}$ for every $v \in \C^\gamma$.
By the second observation, the elements in
$\{ f_0^2 \, x^{b_i} : b_i \in \B\}$ are linearly independent modulo
$I_{D+A_0}$. Therefore, we have that
$\tilde{M}_{g_1} \, \tilde{M}_{g_2} = \tilde{M}_{g_2} \,
\tilde{M}_{g_1}$.
  \end{proof}
  
  \begin{remark}
    This criterion is similar to Bayer and Stillman's criterion to
    compute the Castelnuovo-Mumford regularity of ideals defining a
    zero dimensional projective scheme
    \cite[Thm.~1.10]{bayer_criterion_1987}. Under further assumptions
    on $A_0,E_0,D, \text{ and } \B$, the commutativity of the matrices
    implies that $\gamma$ is the number of isolated solutions of the
    system $\F$, see \cite[Thm.~3.1]{mourrain_new_1999}.
  \end{remark}

  \begin{example}
    [Cont.]
    \Cref{thm:critCommute} is independent of the chosen basis $B$. 
    Its hypotheses are not satisfied by the admissible tuple of our running example, as
    \[\HF((f_0^2,\f),(E_0,E_1+A_0,\dots,E_s+A_0);D+A_0) -
    \HF(\f,(E_1+A_0,\dots,E_s+A_0);D+A_0) = 1 < 2.\]
  However, as we showed in \Cref{ex:eigenvectors} for $B = \{1,x\}$,
  the matrices in $\cal M$ do commute.
  \end{example}

  \section{Construction of admissible tuples} \label{sec:constr}

  In this section, we fix an $s$-tuple of sets of exponents
  $\bm{A} := (A_1,\dots,A_s)$, where $A_i \subset \N^n$, and consider
  a polynomial system
  $\f = (f_1,\dots,f_s) \in R_{A_1} \times \dots \times R_{A_s}$.
  We construct tuples that are admissible under mild assumptions on
  $\f$ (\Cref{ass:zerodim}).
  This allows us to compute the solutions of the system $\f$ using
  \Cref{alg:solve}. \Cref{subsec:explicit} states explicit formulas
  for admissible tuples that in practice are near-optimal in the case
  where $s = n$. In the overdetermined case $(s > n)$, we can obtain
  admissible tuples leading to smaller matrices by using incremental
  constructions. These are the topic of Subsection
  \ref{sec:incConstr}.

  The section uses the following notation. The \emph{convex hull} of a
  finite subset $E \subset \R^n$ is the polytope
  $\conv(E) \subset \R^n$ defined as,
  \[
  \conv(E) := \left\{\sum_{e \in E} \lambda_e \, e ~:~ \sum_{e \in E}
    \lambda_e = 1, \lambda_e \geq 0 \text{ for all } e \in E
  \right\}.
  \]
  By a \emph{lattice polytope} we mean a convex polytope
  $P \subset \R^n$ that arises as $\conv(E)$, where $E \subset
  \N^n$. Such a lattice polytope is called \emph{full-dimensional} if
  it has a positive Euclidean volume in $\R^n$.
  Given two polytopes $P_1,P_2 \subset \R^n$ and $c \in \N$, we denote
  by $P_1 + P_2$ the \emph{Minkowski sum} of $P_1, P_2$ and by
  $c \cdot P_1$ the \emph{$c$-dilation} of $P_1$, that is,
\[
P_1 + P_2 := \{\alpha + \beta : \alpha \in P_1, \beta \in P_2\},
\qquad c \cdot P_1 := \{c \, \alpha : \alpha \in P_1\}.
\]
We denote the Cartesian product of two subsets $P_1 \subset \R^{n_1}$
and $P_2 \subset \R^{n_2}$ by
$P_1 \times P_2 := \{(\alpha,\beta) : \alpha \in P_1,\beta \in P_2\}
\subset \R^{n_1} \times \R^{n_2} = \R^{n_1 + n_2}$. Throughout, we use
the notation
$\Delta_n = \conv( \{0, e_1, \ldots, e_n \}) \subset \R^n$ for the
standard simplex in $\R^n$.

\begin{example} \label{ex:minksum} Consider the sets of exponents
  $E_1 = \{(0,0), (1,0), (1,1), (2,0), (0,1)\}$ and \linebreak
  $E_2 = \{(0,0), (1,0), (0,1)\}$. In Figure \ref{fig:minksum},
  the polytopes $P_1 := \conv(E_1)$, $P_2 := \conv(E_2)$, and
  $P_1+P_2 \subset \R^2$ are displayed. Observe that $P_2$
  is the two-dimensional standard simplex $\Delta_2$.
  \end{example}
  
  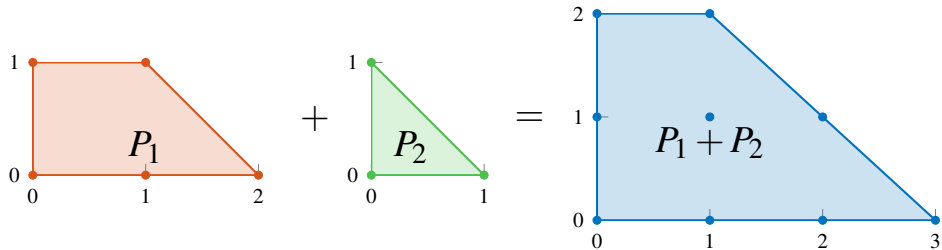
\begin{figure}[h!]
  \centering
  \definecolor{mycolorRed}{rgb}{0.85000,0.32500,0.09800}%1,0,0}
  \definecolor{mycolorGreen}{rgb}{0.3,0.75,0.3}
  \definecolor{mycolorBlue}{rgb}{0.00000,0.44700,0.74100}%0,0,0.75}
\begin{tikzpicture} 
  \node at (3.75cm,0.75cm) {$+$};
  \node at (6.60cm,0.75cm) {$=$};
  \begin{axis}[%
    width=3cm,
    height=1.5cm,
    scale only axis,
    xmin=0,
    xmax=2,
    xtick = {0, 1, 2},
    ymin=0,
    ymax=1,
    ytick = {0,1},
    tick label style={font=\scriptsize},
    axis background/.style={fill=white},
    axis line style={draw=none},
    axis x line*=bottom,
    axis y line*=left
    ]
    \addplot [color=mycolorRed,solid,thick, fill opacity = 0.2, fill = mycolorRed,forget plot]
  table[row sep=crcr]{%
0	0\\
2	0\\
1	1\\
0	1\\
0	0\\
};
\addplot[only marks,mark=*,mark size=1.5pt,mycolorRed
        ]  coordinates {
    (0,0) (1,0) (1,1) (2,0) (0,1)
};

\node at (axis cs:1,0.25) {$P_1$};
\end{axis}
\begin{axis}[%
    xshift=4.5cm,
    width=1.5cm,
    height=1.5cm,
    scale only axis,
    xmin=0,
    xmax=1,
    xtick = {0, 1},
    ymin=0,
    ymax=1,
    ytick = {0,1},
    tick label style={font=\scriptsize},
    axis line style={draw=none} ,
    axis x line*=bottom,
    axis y line*=left
]

\addplot [color=mycolorGreen,solid,thick, fill opacity = 0.2, fill = mycolorGreen,forget plot]
  table[row sep=crcr]{%
0	0\\
1	0\\
0	1\\
0	0\\
};
\addplot[only marks,mark=*,mark size=1.5pt,mycolorGreen
        ]  coordinates {
    (0,0) (1,0) (0,1)
};

\node at (axis cs:0.35,0.25) {$P_2$};
\end{axis}
\begin{axis}[%
    yshift=-.6cm,
    xshift=7.5cm,
    width=4.5cm,
    height=2.75cm,
    scale only axis,
    xmin=0,
    xmax=3,
    xtick = {0, 1, 2, 3},
    ymin=0,
    ymax=2,
    ytick = {0,1, 2},
    tick label style={font=\scriptsize},
    axis line style={draw=none} ,
    axis x line*=bottom,
    axis y line*=left
    ]
    \addplot [color=mycolorBlue,solid,thick, fill opacity = 0.2, fill = mycolorBlue,forget plot]
  table[row sep=crcr]{%
0	0\\
3	0\\
1	2\\
0	2\\
0	0\\
};
\addplot[only marks,mark=*,mark size=1.5pt,mycolorBlue
        ]  coordinates {
    (0,0) (1,0) (2, 0) (3, 0) (0,1) (1,1) (2, 1) (0,2) (1,2) (0,3) (1,3)
};

\node at (axis cs:1,.75) {$P_1 + P_2$};
\end{axis}
\end{tikzpicture}%
\caption{Polytopes from Example \ref{ex:minksum}.}
\label{fig:minksum}
\end{figure}

\subsection{Explicit constructions}  \label{subsec:explicit}
We present explicit constructions of admissible tuples for the following types of polynomial systems, listed in (more or less) increasing order of generality. 
  %%%
  \begin{enumerate}
  \item \textbf{Dense systems.} These are systems for which $f_i$ may involve all monomials of degree at most $d_i$, where $(d_1, \ldots, d_s) \in \N_{>0}^s$ is an $s$-tuple of positive natural numbers. For dense systems, we have $A_i = \{ \alpha \in \N^n ~:~ \alpha_1 + \cdots + \alpha_n  \leq d_i \} = (d_i \cdot \Delta_n) \cap \N^n$. 
  \item \textbf{Unmixed systems.}
  We say that the polynomial system $\f$ is \emph{unmixed} if there is
  a full-dimensional lattice polytope $P$ and integers $d_1,\dots,d_s$ such that
  $d_i \cdot P = \conv(A_i)$.
  The \emph{codegree} of $P$ is the smallest $t \in \N_{>0}$ such that %
  %$(t \cdot P) \cap \N^n \neq \emptyset$, i.e. 
  $ t \cdot P$ contains a point with integer coordinates in its interior.
  Note that dense systems can be viewed as unmixed systems with $P = \Delta_n$.
  
  \item \textbf{Multi-graded dense systems.} A different, natural generalization of the dense case allows different degrees for different subgroups of the variables $x_1, \ldots, x_n$. Let $\{ \I_1, \ldots, \I_r \}$ be a partition of $\{1, \ldots, n \}$, i.e. $\I_j \subset \{1, \ldots, n \}$, $\I_j \cap \I_k = \emptyset$ and $\bigcup_{j=1}^r \I_j = \{1, \ldots, n\}$. This way we obtain subsets $x_{\I_1}, \ldots, x_{\I_r} \subset \{x_1, \ldots, x_n\}$ of the variables, indexed by the $\I_j$. In a multi-graded dense system, $f_i$ may contain all monomials of degree at most $d_{i,j}$ in the variables $x_{\I_j}$. If the variables are ordered such that the first $n_1$ variables are indexed by $\I_1$, the next $n_2$ variables by $\I_2$ and so on, this means $A_i = ((d_{i,1} \cdot \Delta_{n_1}) \times \cdots \times (d_{i,r} \cdot \Delta_{n_r})) \cap \N^n$. Necessarily we have $n_1 + \cdots + n_r = n$. A dense system is a multi-graded dense system with~$r = 1$.
  \item \textbf{Multi-unmixed systems.}
  This is a generalization of the unmixed and the multi-graded dense case, where there are full-dimensional lattice polytopes
  $P_1 \subset \R^{n_1},\dots,P_r \subset \R^{n_r}$ such that
  $0 \in P_i$ and $n = \sum_i n_i$ and for each $i \in \{1, \ldots, s\}$, an $r$-tuple $(d_{i,1},\dots,d_{i,r}) \in \N^r$ such that
  $\conv(A_i) = (d_{i,1} \cdot  P_1) \times \dots \times
  (d_{i,r} \cdot  P_r)$. That is, the convex hull of $A_i$ is the product of dilations of the polytopes $P_1, \ldots, P_r$.
  Note that a multi-graded dense system is a multi-unmixed system with $P_i = \Delta_{n_i}$, and an unmixed system is a multi-unmixed system with~$r~=~1$. 

\item \textbf{Mixed systems.} This is the most general case, our only assumption on each $A_i$ is that the lattice polytope $\sum_{i=1}^s \conv(A_i) \subset \R^n$ is full-dimensional.
\end{enumerate}

If the full-dimensionality requirements in the previous list are not
fulfilled, one can reformulate the system using fewer variables. For
polynomial systems from these nested families, admissible tuples are
presented in Table \ref{tbl:paramsTori}. In what follows, we discuss
them in more detail.

The tuples presented in \Cref{tbl:paramsTori} are admissible under a
\emph{zero-dimensionality assumption} on the system
$\f$. Unfortunately, it is not enough to require that $\f(x) = 0$ has
finitely many solutions in $\C^n$ or $(\C \setminus \{0\})^n$. Loosely
speaking, we need that the lifting of $\f$ to a certain larger
solution space has finitely many solutions. This is best understood in
the context of toric geometry. We refer the reader to
\cite[Sec.~3]{telen2019numerical} or \cite[Sec.~2]{bender_toric_2020}
for a description of the zero-dimensionality assumption in this
language. Here, we omit terminology from toric geometry and state the
assumption in terms of \emph{face systems}, following
\cite{bernshtein_number_1975}. We will use the notation 
\begin{equation} \label{eq:notationf}
 \f = (f_1, \ldots, f_s) \in R_{A_1} \times \dots \times R_{A_s} , \quad f_i = \sum_{\alpha \in A_i} c_{i,\alpha} \, x^\alpha.
\end{equation} 
For any vector $v \in \R^n$, we define 
\[ A_{i,v} := \{ \alpha \in A_i ~:~ \pair{v,\alpha} = \min_{\beta \in A_i} \pair{v,\beta} \},\]
where $\pair{v,\alpha} = v_1 \alpha_1 + \cdots + v_n \alpha_n \in \R$. For $i = 1, \ldots, s$, fix any $\beta_{i,v} \in A_{i,v}$. This gives a new system 
\[\F_v = (f_{1,v}, \ldots, f_{s,v}) \in R_{A_{1,v}} \times \dots \times R_{A_{s,v}} , \quad f_{i,v} = \sum_{\alpha \in A_{i,v}} c_{i,\alpha} \,  x^{\alpha-\beta_{i,v}}, \]
called the \emph{face system} associated to $v$. 
%Note that $\f$ is the face system $\f_0$ associated to $0 \in \R^n$.
The exponents $\alpha - \beta_{i, v}, \alpha \in A_i$ occurring in the polynomials $f_{i,v}$ lie in a lattice of rank $< n$ when $v \neq 0$. We denote this lattice by
\[ M_v = \Z \cdot \left \{ \bigcup_{i=1}^s (A_{i,v} - \beta_{i,v}) \right \}. \]
Let $r_v$ be the rank of $M_v$. Applying a change of coordinates, $\F_v$ is a system of Laurent polynomials in $r_v$ variables on the torus $(\C \setminus \{0\})^{r_v}$. Its solutions are independent of the choice of $\beta_{i,v} \in A_{i,v}$.

\begin{assumption}[Zero-dimensionality assumption] \label{ass:zerodim}
For every $v \in \R^n$, the \emph{face system} $\f_v(x) = 0$ has finitely many (possibly zero) solutions in $(\C \setminus \{0\})^{r_v}$. 
\end{assumption}
Setting $v = 0$, Assumption \ref{ass:zerodim} implies that $\f(x) = 0$ has finitely many solutions in $(\C \setminus \{0\})^n$.
{
\begin{table}[h!]
\small
  \[
\def\arraystretch{1.35}
  \begin{array}{c | c }
    \hline
    \multicolumn{2}{c}{
    \phantom{\text{ (\cite[Cor.~4.3]{bender_toric_2020})}} \hfill
    \text{1. Dense case, } 
    \conv(A_i) = d_{i} \cdot \Delta_n
    \hfill \text{ (\cite[Cor.~4.3]{bender_toric_2020})}
    } 
    \\  \hline
    A_0 &  \phantom{((d_0 = 1)} \hfill \Delta_n \cap \N^n  \hfill (d_0 = 1) \\
    E_i & ((\sum_{j \neq i} d_{j} - n) \cdot \Delta_n) \cap \N^n \\
    D &  ((\sum_{j } d_{j} - n) \cdot \Delta_n) \cap \N^n \\ \hline
    \hline
    \multicolumn{2}{c}{
    \phantom{\text{ (\cite[Thm.~4.5]{bender_toric_2020})}} \hfill
    \text{2. Unmixed case, } 
    \conv(A_i) = d_{i} \cdot P
    \hfill \text{ (\cite[Thm.~4.5]{bender_toric_2020})}} 
    \\  \hline
    A_0 &  \phantom{((d_0 = 1)} \hfill P \cap \N^n  \hfill (d_0 = 1) \\
    E_i & ((\sum_{j \neq i} d_{j} - \textsc{Codegree}(P) + 1) \cdot P) \cap \N^n \\
    D &  ((\sum_{j } d_{j} - \textsc{Codegree}(P) + 1) \cdot P) \cap \N^n \\ \hline
    \hline
    \multicolumn{2}{c}{
    \qquad\qquad
    \text{3. Multi-graded dense case, } 
    \conv(A_i) = (d_{i,1} \cdot \Delta_{n_1} \times \dots \times d_{i,r} \cdot \Delta_{n_r})
    \qquad
    \text{ (\cite[Ex.~10]{bender_toric_2020})}} 
    \\  \hline
    A_0 &
          \phantom{(d_{0,k} = 1)} \hfill
          (\Delta_{n_1} \times \dots \times \Delta_{n_r}) \cap \N^n
            \hfill (d_{0,k} = 1)

    \\
    E_i & \prod_k ((\sum_{j \neq i} d_{j,k} - n_k) \cdot \Delta_{n_k}) \cap \N^n \\
    D &        \prod_k ((\sum_{j} d_{j,k} - n_k) \cdot \Delta_{n_k}) \cap \N^n \\ \hline
    \hline
    \multicolumn{2}{c}{
    \phantom{\text{ (\cite[Ex.~10]{bender_toric_2020})}} \hfill 
    \text{4. Multi-unmixed case, } 
        \conv(A_i) = (d_{i,1} \, P_1 \times \dots \times d_{i,r} \, P_r) 
    \hfill \text{ (\cite[Ex.~10]{bender_toric_2020})}} 
    \\  \hline
    A_0 &
          \phantom{(d_{0,k} = 1)} \hfill
          (P_1 \times \dots \times P_r) \cap \N^n
            \hfill (d_{0,k} = 1)

    \\
    E_i & \prod_k ((\sum_{j \neq i} d_{j,k} - \textsc{Codegree}(P_k) + 1) \cdot P_k) \cap \N^n \\
    D &        \prod_k ((\sum_{j } d_{j,k} - \textsc{Codegree}(P_k) + 1) \cdot P_k) \cap \N^n \\ \hline
    \hline
    \multicolumn{2}{c}{
    \phantom{\text{ (\cite[Thm.~4.4]{bender_toric_2020})}} \hfill
    \text{5. Mixed case, } 
    \conv(A_i) = P_i
    \hfill \text{ (\cite[Thm.~4.4]{bender_toric_2020})}
    } 
    \\  \hline
    A_0 &  \phantom{(P_0 = \Delta_n)} \hfill \Delta_n \cap \N^n  \hfill (P_0 = \Delta_n) \\
    E_i & (\sum_{j \neq i} P_j) \cap \N^n \\
    D &  (\sum_{j } P_j) \cap \N^n \\ \hline
  \end{array}
  \]
  \caption{Admissible tuples for five families of structured
    polynomial systems. In the table, we assume that all $d_i
    > 0$, $d_{i,j} \geq 0$ and $P\subset
    \R^n, P_i \subset
    \R^{n_i}$ are full dimensional lattice polytopes.}
  \label{tbl:paramsTori}
\end{table}
}
  
    \begin{remark}
    \label{rmk:notMuchSolutionsAtInf}
    \Cref{ass:zerodim} holds for a generic element $\f \in R_{A_1}
    \times \cdots \times
    R_{A_s}$, in the sense of \Cref{sec:alg}. In fact, for a generic system
    $\f$ all face systems $\f_v$ for $v \neq
    0$ have no solutions in $(\C \setminus
    \{0\})^{r_v}$, and the condition for this to hold only depends on the
    coefficients associated to some vertices of the polytopes
    $\conv(A_i)$, see \cite{canny_optimal_1991}. The fact that we can
    allow finitely many solutions for all face systems comes from the
    recent contributions
    \cite{telen2019numerical,bender_toric_2020}. In practice, this
    means that our algorithm is robust in the presence of isolated
    solutions \emph{at} or \emph{near infinity} (where this is
    understood in the appropriate toric sense).
  \end{remark}
  
  \begin{theorem}
    \label{thm:overdeterminedSparse}
    Consider a polynomial system $\F = (f_1, \ldots, f_s)$ with supports $A_1, \ldots, A_s$ satisfying Assumption \ref{ass:zerodim}. Consider
    $(A_0,(E_0,\dots,E_s),D)$ as defined in
    Table~\ref{tbl:paramsTori}.
    Then, we have that $(\F,A_0,(E_0,\dots,E_s),D)$ is an admissible
    tuple.
  \end{theorem}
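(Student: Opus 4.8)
The plan is to verify, family by family, the three defining conditions of \Cref{def:condSolv} for the tuple in \Cref{tbl:paramsTori}, treating the first two uniformly and reducing the third (the rank condition) to the cited results of \cite{bender_toric_2020}.

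First I would dispose of the \textbf{lattice condition}. In the dense, multi-graded dense, and mixed cases the table gives $A_0 = (\Delta_{n_1} \times \cdots \times \Delta_{n_r}) \cap \N^n$ (with $r = 1$ for the dense case, $r$ given by the partition in the multi-graded case, and simply $\Delta_n \cap \N^n$ in the mixed case), so $\{0, e_1, \ldots, e_n\} \subseteq A_0$, whence $0 \in A_0$ and $\Z A_0 = \Z^n$ are immediate. In the unmixed and multi-unmixed cases $A_0 = (P_1 \times \cdots \times P_r) \cap \N^n$; here $0 \in A_0$ since $0 \in P_k$ for every $k$, and $\Z A_0 = \Z^n$ reduces to $\Z(P_k \cap \N^n) = \Z^{n_k}$ for each $k$, which we may assume after normalizing the character lattice by a monomial change of coordinates, exactly as in the reduction mentioned just before the statement. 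Next comes the \textbf{compatibility condition} $A_i + E_i \subseteq D$ for $i = 0, \ldots, s$. This is bookkeeping with Minkowski sums: using $(Q_1 \times Q_2) \cap \N^{n_1+n_2} = (Q_1 \cap \N^{n_1}) \times (Q_2 \cap \N^{n_2})$, the identity $(cQ) + (c'Q) = (c+c')Q$ for a fixed polytope $Q$, and the trivial fact that a sum of two elements of $\N^n$ lies in $\N^n$, one checks in each family that $\conv(A_i) + \conv(E_i) = \conv(D)$ for $i \geq 1$ and $\conv(A_0) + \conv(E_0) \subseteq \conv(D)$, hence $A_i + E_i \subseteq \conv(D) \cap \N^n = D$. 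For instance in the dense case $\conv(A_i) + \conv(E_i) = (d_i + \sum_{j\neq i} d_j - n)\cdot\Delta_n = (\sum_{j=0}^s d_j - n)\cdot\Delta_n = \conv(D)$ for $i \geq 1$, and $\conv(A_0) + \conv(E_0) = (1 + \sum_{j=1}^s d_j - n)\cdot\Delta_n = \conv(D)$; the dilation factors are nonnegative because $\sum_{j\neq i} d_j \geq s \geq n$ (and, in the unmixed cases, because the codegree of a full-dimensional lattice polytope in $\R^n$ is at most $n+1$). The other four families are identical with $\Delta_n$ replaced by $P$, a product of simplices, or a product of the $P_k$, and $n$ replaced by the relevant codegree minus one.

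The substantive part is the \textbf{rank condition}: the existence of $f_0 \in R_{A_0}$ with $\rank(N_{f_0}) = \HF(\f,\bm{E};D)$. By \Cref{thm:propertiesOFh} this is equivalent to $\HF((f_0,\f),(E_0,\bm{E});D) = 0$, i.e.\ to surjectivity of the Sylvester map $\Sylv_{((f_0,\f),(E_0,\bm{E});D)} \colon R_{E_0} \times R_{E_1} \times \cdots \times R_{E_s} \to R_D$, equivalently $R_D = f_0 R_{E_0} + \sum_{i=1}^s f_i R_{E_i}$; and by the genericity remark following \Cref{thm:propertiesOFh} it suffices to produce one such $f_0$. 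When $s = n$, this surjectivity is precisely the content of the result cited in the corresponding row of \Cref{tbl:paramsTori} (\cite[Cor.~4.3]{bender_toric_2020} in the dense case, \cite[Thm.~4.5]{bender_toric_2020} in the unmixed case, \cite[Ex.~10]{bender_toric_2020} in the multi-graded dense and multi-unmixed cases, and \cite[Thm.~4.4]{bender_toric_2020} in the mixed case), once one translates between the corank $\HF$ used here and the surjectivity of the Macaulay map used there; its proof there rests on a Castelnuovo--Mumford-type regularity estimate together with the vanishing of certain sheaf cohomology on a suitable toric compactification of $(\C\setminus\{0\})^n$ under \Cref{ass:zerodim}.

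For $s > n$ one reduces to the square case: enlarging the index set only enlarges the right-hand side $f_0 R_{E_0} + \sum_i f_i R_{E_i}$, so it suffices to obtain surjectivity onto a smaller graded piece $R_{D'}$ (with the corresponding exponent sets $E_i'$) and then multiply the resulting identity by $R_{D''}$, where $D''$ is chosen so that $\conv(D'') + \conv(D') = \conv(D)$ — possible by the same degree bookkeeping as in the compatibility step — using the Minkowski inclusions $E_i' + D'' \subseteq E_i$ that this bookkeeping also yields. The delicate point here, which the constructions in \cite{bender_toric_2020} are designed to accommodate, is that a sub-collection of the $s$ equations need not itself satisfy \Cref{ass:zerodim}, so one cannot simply apply the square-case result to a generic $n$-element subsystem; instead the regularity/cohomology argument must be run in a degree large enough to absorb all $s$ equations simultaneously. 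This cohomological vanishing in the critical degree is the main obstacle of the whole statement, and in this plan it is imported wholesale as a black box from \cite{bender_toric_2020}; the rest is the polytope arithmetic sketched above.
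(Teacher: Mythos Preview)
Your verification of the lattice and compatibility conditions is more detailed than the paper's (which dismisses both ``by construction''), and is essentially fine.

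The gap is in the rank condition for $s>n$. You attempt a reduction to the square case by restricting to an $n$-element subsystem, then correctly observe that such a subsystem need not satisfy \Cref{ass:zerodim}, and finally retreat to importing the vanishing ``wholesale as a black box''. This leaves the heart of the argument unexplained. The paper's route is different and avoids the reduction entirely: \Cref{ass:zerodim} guarantees that $\f$ has only finitely many zeros on the toric variety $X$ associated to $\conv(A_1)+\cdots+\conv(A_s)$; since $A_0$ corresponds to an ample (or at least basepoint-free with enough sections) class on $X$, a \emph{generic} $f_0\in R_{A_0}$ misses all of them, so the augmented system $(f_0,\f)$ has \emph{no} solutions on $X$. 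For a system with empty zero locus the associated Koszul complex of sheaves is exact (the paper cites \cite[Ch.~2.B,~Prop.~1.4.a]{gelfand_discriminants_1994} and \cite[Thm.~3.C]{massri2016solving}), and then the regularity argument of \cite[Sec.~4]{bender_toric_2020} --- which is formulated for $s+1$ sections, not just $n+1$ --- yields $\HF((f_0,\f),(E_0,\bm E);D)=0$ directly. The crucial step you are missing is precisely this ``generic $f_0$ empties the solution set'' observation; once you have it, there is no need to single out a square subsystem at all, and the $s=n$ and $s>n$ cases are treated uniformly.

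A secondary issue with your abandoned reduction: even when a square subsystem does satisfy \Cref{ass:zerodim}, the ``multiply by $R_{D''}$'' step requires $D\subseteq D'+D''$ as lattice-point sets, which is a normality statement about the polytope $P$ (or the $P_k$) that you have not justified.
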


  \begin{proof}
    We sketch the proof. We need to show that the three conditions in
    \Cref{def:condSolv} are satisfied. Observe that, by construction,
    the elements from the tuple satisfy the Compatibility condition
    and $A_0$ satisfies the Lattice condition.
    By \Cref{ass:zerodim}, for generic $f_0 \in R_{A_0}$, the system
    $(f_0,\dots,f_s)$ has no solutions on the toric variety associated
    to the lattice polytope $\conv(A_1) + \cdots + \conv(A_s)$ and we
    can adapt \cite[Thm.~4.3]{bender_toric_2020} straightforwardly to
    the case of no solutions (the Koszul complex of sheaves in that
    proof is exact by
    \cite[Ch.~2.B,~Prop.1.4.a]{gelfand_discriminants_1994}, see also
    \cite[Thm.~3.C]{massri2016solving}). Hence, following the same
    procedure as in \cite[Sec.~4]{bender_toric_2020}, we can show that
    $\HF((f_0,\f),(E_0,\bm{E});D) = 0$.
    Therefore, by \Cref{thm:propertiesOFh}, the Rank condition holds.
  \end{proof}

  \begin{remark}[The number $\gamma$ and the number of solutions]\label{rmk:dimCoKer}
    Consider $\f$ satisfying \Cref{ass:zerodim}. We fix an admissible
    tuple $(\F,A_0,(E_0,\dots,E_s),D)$ from \Cref{tbl:paramsTori}. If
    $n = s$, the dimension $\gamma = \HF(\F, {\bm E}; D)$ is the
    number of solutions defined by $\f$ on the compact toric variety
    $X \supset (\C \setminus \{0\})^n$ from
    \cite[Thm.~4.4]{bender_toric_2020}, counted with
    multiplicities. For generic $\f$, all solutions have multiplicity
    1 and lie in $(\C \setminus \{0\})^n$, which means that $\gamma$
    is the \emph{mixed volume} of the polytopes \linebreak
    $\conv(A_1), \ldots, \conv(A_n)$
    \cite[Thm.~A]{bernshtein_number_1975}. Additionally, in these
    cases, we have a complete characterization of the invariant
    subspaces of $M_g$ as it represents a multiplication operator, see
    \cite[Sec.~3.2]{bender_toric_2020}.
    It was pointed out to us by Laurent Bus\'e that
    \cite[Lem.~6.2]{chardin_powers_2013} should imply that the same
    holds for $s > n$, see the proof of
    \cite[Prop.3]{buse_multigraded_2021} for an example of how to
    prove such a result in the multihomogeneous case.
  \end{remark}
  
  %\begin{remark}[Matrices in elimination theory]
  Macaulay matrices defined by the tuples from \Cref{tbl:paramsTori}
  have been used in different algorithms for solving sparse polynomial
  systems, e.g.~sparse resultants \cite{emiris1999matrices}, truncated
  normal forms \cite{telen2018stabilized}, \Groebner bases
  \cite{bender_towards_2018,bender2019gr}, and others
  \cite{massri2016solving}. When restricted to Macaulay matrices,
  these constructions are often near-optimal when $s = n$.
  However, there exist other kind of smaller matrices which can be
  also used to solve the system
  \cite{bender_determinantal_2021,bender_bilinear_2018}.
  When $s > n$, we can often work with much smaller Macaulay
  matrices. This is the topic of the next subsection.
  %\end{remark}

  \subsection{Incremental constructions}
  \label{sec:incConstr}
  Even though the tuples from \Cref{thm:overdeterminedSparse} are
  admissible, they might lead to the construction of unnecessarily big
  matrices in \Cref{alg:solve}. To avoid this, we present an
  incremental approach which leads to the construction of potentially
  smaller matrices. For ease of exposition, we consider only the
  unmixed case. The ideas can be extended to the other cases.
  
  In what follows, we fix a polytope $P$ such that $0 \in P$ and
  integers $d_0,\dots,d_s \in \N_{>0}$. We consider sets of exponents
  $A_0,A_1,\dots,A_s \subset \N^n$ such that, for each
  $i \in \{0,\dots,s\}$, we have
  $\conv(A_i) = d_i \cdot P$.
  For each $\lambda \in \N$, we define ${\bm E}^\lambda = (E_1^\lambda, \ldots, E_s^\lambda)$ with
\begin{equation} \label{eq:expsunmixed}
\quad E_i^{\lambda} :=  
                            \left\{
                            \begin{array}{c l}
                              ((\lambda - d_i) \cdot P) \cap \N^n & \text{if } \lambda \geq d_i \\
                              \emptyset & \text{otherwise} 
                            \end{array}                            
                                     \right. \quad \text{ for } i = 0, \ldots, s, \quad \text{and } \quad D^{\lambda} := (\lambda \cdot P) \cap \N^n.
\end{equation}
  \begin{theorem}
    \label{thm:boundUnmixed}
    With the above notation, consider an unmixed polynomial system
    $\F \in R_{A_1} \times \cdots \times R_{A_s}$, with
    $\conv(A_i) = d_i \cdot P$, satisfying \Cref{ass:zerodim}. For any
    $\lambda \in \N$ such that there is $f_0 \in R_{A_0}$ satisfying
    $\rank(N_{f_0}) = \HF(\F,{\bm E}^\lambda;D^\lambda)$, we have that
    the tuple $(\F,A_0,(E_0^{\lambda},{\bm E}^\lambda),D^{\lambda})$
    is admissible.
    Moreover, for any
    $\lambda \geq \sum_i d_i - \textsc{Codegree}(P) + 1$ and for
    generic $f_0 \in R_{A_0}$, we have that
    $\rank(N_{f_0}) = \HF(\F,{\bm E}^\lambda;D^\lambda)$.
  \end{theorem}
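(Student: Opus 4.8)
The first claim is bookkeeping. Given the stated hypothesis on $\lambda$, the tuple $(\F,A_0,(E_0^{\lambda},{\bm E}^{\lambda}),D^{\lambda})$ meets the Rank condition of \Cref{def:condSolv} by assumption, and it meets the Lattice condition because $0\in P$ forces $0\in A_0$ while $\conv(A_0)=d_0\cdot P$ is full-dimensional with $\Z A_0=\Z^n$. For the Compatibility condition I would use that, $P$ being convex, $d_i\cdot P+(\lambda-d_i)\cdot P=\lambda\cdot P$ whenever $\lambda\ge d_i$; hence for every $i\in\{0,\dots,s\}$,
\[
A_i+E_i^{\lambda}\ \subseteq\ \bigl((d_i\cdot P)\cap\N^n\bigr)+\bigl(((\lambda-d_i)\cdot P)\cap\N^n\bigr)\ \subseteq\ (\lambda\cdot P)\cap\N^n\ =\ D^{\lambda},
\]
where the case $\lambda<d_i$ is vacuous since then $E_i^{\lambda}=\emptyset$. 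So the tuple is admissible.

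For the second claim the plan is to reduce to a surjectivity statement for a Koszul complex of sheaves on the projective toric variety $Y$ associated to $P$, exactly as in the proof of \Cref{thm:overdeterminedSparse}, and then to observe that the cohomological input is monotone in $\lambda$, so that it persists throughout the range $\lambda\ge\sum_i d_i-\textsc{Codegree}(P)+1$ rather than only at the boundary value of \Cref{tbl:paramsTori}. By \Cref{thm:propertiesOFh}, the equality $\rank(N_{f_0})=\HF(\F,{\bm E}^{\lambda};D^{\lambda})$ is equivalent to $\HF((f_0,\F),(E_0^{\lambda},{\bm E}^{\lambda});D^{\lambda})=0$, i.e.\ to surjectivity of the Macaulay map of $(f_0,f_1,\dots,f_s)$ with multiplier sets $E_i^{\lambda}$ onto $R_{D^{\lambda}}$. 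First one checks that a generic $f_0\in R_{A_0}$ keeps \Cref{ass:zerodim} in force: every face system of $(f_0,\F)$ is a face system of $\F$ with one extra generic equation adjoined, hence has at most the finitely many solutions of the latter. In particular $V(\F)$ is finite on $Y$, so a generic $f_0$ vanishes at none of those points and the sections $f_0,\dots,f_s$ of $\OO_Y(d_0),\dots,\OO_Y(d_s)$ have empty common zero locus on $Y$.

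Since $f_0,\dots,f_s$ have empty common zero locus on $Y$ (around each point of $Y$ one of them is a local unit), the Koszul complex
\[
0\longrightarrow \OO_Y(-d_0-\cdots-d_s)\longrightarrow \cdots\longrightarrow \bigoplus_{i=0}^{s}\OO_Y(-d_i)\longrightarrow \OO_Y\longrightarrow 0
\]
is an exact complex of sheaves, as in the proof of \Cref{thm:overdeterminedSparse} (cf.\ \cite[Ch.~2.B, Prop.~1.4.a]{gelfand_discriminants_1994} and \cite[Thm.~3.C]{massri2016solving}). Twisting by $\OO_Y(\lambda)$ and splitting the resulting long exact complex into short exact sequences of sheaves, the desired surjectivity at the $\OO_Y(\lambda)$-spot follows once $H^j\bigl(Y,\OO_Y(\lambda-d_S)\bigr)=0$ for all $j\in\{1,\dots,n\}$ and all $S\subseteq\{0,\dots,s\}$ with $\#S=j+1$, where $d_S:=\sum_{i\in S}d_i$. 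For $1\le j\le n-1$ this holds for every twist, by Demazure vanishing in the nef range and the standard vanishing of intermediate cohomology of powers of an ample line bundle on a projective toric variety in the negative range. For $j=n$, Serre duality gives $H^n(Y,\OO_Y(m))^{\vee}\cong H^0\bigl(Y,\omega_Y\otimes\OO_Y(-m)\bigr)$, whose dimension is the number of interior lattice points of $(-m)\cdot P$; this is zero precisely when $-m\le\textsc{Codegree}(P)-1$, i.e.\ $m\ge 1-\textsc{Codegree}(P)$. As all $d_i>0$ we have $d_S\le\sum_{i=0}^{s}d_i$, so the hypothesis on $\lambda$ yields $\lambda-d_S\ge 1-\textsc{Codegree}(P)$ for every such $S$, which is exactly the remaining vanishing. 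The boundary case recovers \Cref{thm:overdeterminedSparse}; the new content is this monotonicity in $\lambda$.

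The step I expect to be the real obstacle is the toric-geometric bookkeeping around $Y$: matching the Macaulay vector spaces $R_{k\cdot P\cap\N^n}$ with $H^0(Y,\OO_Y(k))$ (where normality of $P$, or a passage to the Cox ring as in \cite{bender_toric_2020}, enters), verifying exactness of the Koszul complex in the genuinely overdetermined range $s+1>n$, and tracing the $\textsc{Codegree}(P)$ threshold through Serre duality and interior lattice points. For this I would lean on the infrastructure of \cite{bender_toric_2020} and on the proof of \Cref{thm:overdeterminedSparse}, contributing only the observation that the cohomological hypotheses weaken as $\lambda$ grows.
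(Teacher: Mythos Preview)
Your proposal is correct and follows essentially the same route as the paper. The paper's own proof is extremely terse: it notes that the Compatibility and Lattice conditions hold by construction and the Rank condition by assumption, and for the second claim simply says ``the proof follows as in \Cref{thm:overdeterminedSparse}''. You have unpacked exactly what that reference entails---reducing via \Cref{thm:propertiesOFh} to $\HF((f_0,\F),(E_0^\lambda,{\bm E}^\lambda);D^\lambda)=0$, invoking the exact Koszul complex of sheaves on the toric variety for a base-point-free system, and checking the requisite cohomological vanishing---and you have made explicit the one point the paper leaves implicit, namely that the vanishing conditions are monotone in $\lambda$ (the top-cohomology constraint $\lambda-d_S\ge 1-\textsc{Codegree}(P)$ only weakens as $\lambda$ grows), so the argument at the boundary value from \Cref{tbl:paramsTori} persists for all larger $\lambda$. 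One small caveat: your justification of the Lattice condition (``full-dimensional with $\Z A_0=\Z^n$'') is not automatic from full-dimensionality alone; like the paper, you are tacitly relying on the standing convention on $P$ and $A_0$ in the unmixed setup.
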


  \begin{proof}
    By construction, the tuple satisfies the Compatibility and Lattice
    conditions. By assumption, it satisfies the Rank condition, so it
    is admissible.
    The proof follows as in \Cref{thm:overdeterminedSparse}.
  \end{proof}

  The bound upper bound on $\lambda$ obtained in
  \Cref{thm:boundUnmixed} is not tight for overdetermined systems. Below, we will present a broad class of overdetermined
  unmixed systems, namely semi-regular* sequences, for which we can
  improve it.

  \begin{remark}[The number $\gamma$ and the number of solutions]
    \label{rmk:nsolsSpecialBound}
    In contrast to \Cref{rmk:dimCoKer}, the condition
    $\rank(N_{f_0}) = \Cok(\F,{\bm E}^\lambda;D^\lambda)$ does not
    imply that $\gamma = \HF(\F,{\bm E}^\lambda;D^\lambda)$ agrees
    with the number of solutions of $\F$ on some toric
    compactification.
    In fact, in \Cref{sec:numexp}, we will present examples of
    semi-regular* sequences (\Cref{deg:semireg}) where $\gamma$ is
    strictly larger than the number of solutions.
    In these cases, the matrices $M_g$ from \Cref{eq:defMg} are not
    multiplication operators.
    For readers familiar with the concept of Castelnuovo-Mumford
    regularity, we note that this happens because the degree
    $D^\lambda$ belongs to the regularity of $\{f_0,\F\}$, but not
    necessarily to that of $\F$.
  \end{remark}

  \Cref{thm:boundUnmixed} suggests an algorithm for finding an
  admissible tuple for an unmixed system $\F$: we simply check, for a
  random element $f_0 \in R_{A_0}$ and increasing values of $\lambda$,
  whether $\rank(N_{f_0}) = \HF(\F,{\bm E}^\lambda;D^\lambda)$ with
  $N_{f_0} = \Cok(\f,{\bm E}^\lambda; D^\lambda) \cdot
  \Mat(\f,E_0^\lambda;D^\lambda)$. In order to do this efficiently,
  instead of computing $\Cok(\f,{\bm E}^{\lambda+1}; D^{\lambda+1})$
  directly as the left nullspace of the large matrix
  $\Mat(\f,{\bm E}^{\lambda+1}; D^{\lambda+1})$, we will obtain it
  from the previously computed
  $\Cok(\f,{\bm E}^{\lambda}; D^{\lambda})$ and a smaller Macaulay
  matrix. This technique was applied in the dense setting
  ($P = \Delta_n$) in
  \cite{batselier2014fast,mourrain2019truncated}, where it is also called
  `degree-by-degree' approach. See also \cite{parkinson2021analysis}
  for a recent complexity analysis.

Note that, by construction, $E_i^\lambda \subset E_i^{\lambda+1}$ and $D^\lambda \subset D^{\lambda+1}$. The first step is to construct the following $2 \times 2$ block matrix
\begin{equation} \label{eq:prodmtx}
 (\Cok(\f,{\bm E}^{\lambda}; D^{\lambda}) \times \id) := \begin{blockarray}{cc}
D^\lambda & D^{\lambda+1} \setminus D^\lambda \\
\begin{block}{[cc]}
\Cok(\f,{\bm E}^{\lambda}; D^{\lambda}) & 0 \\ 
0 & \id \\
\end{block}
\end{blockarray}.
\end{equation}
Here $\id$ denotes the identity matrix of size $\#(D^{\lambda+1} \setminus D^\lambda )$. Note that the columns of the matrix $(\Cok(\f,{\bm E}^{\lambda}; D^{\lambda}) \times \id)$ are indexed by $D^{\lambda+1}$, where the first block column is indexed by $D^\lambda \subset D^{\lambda+1}$. Next, we set ${\bm E}^{\lambda+1} \setminus {\bm E}^\lambda := (E_1^{\lambda + 1} \setminus E_1^\lambda, \ldots, E_s^{\lambda + 1} \setminus E_s^\lambda)$ and construct the Macaulay matrix $\Mat(\f, {\bm E}^{\lambda+1} \setminus {\bm E}^\lambda; D^{\lambda+1})$. Here we require that the ordering of the rows is compatible with the ordering of the columns in \eqref{eq:prodmtx}. Let $L^{\lambda + 1}$ be a left nullspace matrix of the matrix product
\begin{equation} \label{eq:smallermtx}
(\Cok(\f,{\bm E}^{\lambda}; D^{\lambda}) \times \id)  \cdot \Mat(\f, {\bm E}^{\lambda+1} \setminus {\bm E}^\lambda; D^{\lambda+1}).
\end{equation}
Then
$\Cok(\f,{\bm E}^{\lambda+1}; D^{\lambda+1}) = L^{\lambda + 1} \cdot
(\Cok(\f,{\bm E}^{\lambda}; D^{\lambda}) \times \id)$ is a left
nullspace matrix for the Macaulay matrix
$\Mat(\f,{\bm E}^{\lambda+1}; D^{\lambda+1})$.
The power of this
approach lies in the fact that \eqref{eq:smallermtx} is much smaller
than $\Mat(\f,{\bm E}^{\lambda+1}; D^{\lambda+1})$, which leads to a
much cheaper left nullspace computation.

This gives an iterative algorithm for \emph{updating} the left
nullspace matrix $\Cok(\f,{\bm E}^{\lambda}; D^{\lambda})$. We start
our iteration by considering $\lambda = \max_i d_i$, as we want to
take into account all of the equations. This discussion is summarized
in Algorithm \ref{alg:getATunmixed}. Note that the algorithm computes
the cokernel $\Cok(\f,{\bm E}; D)$ for the admissible tuple as a
by-product, as well as the matrix $N_{f_0}$. This allows us to skip the
steps before line \ref{line:qr} in \Cref{alg:solve}.

\begin{algorithm}[h]
    \caption{\textsc{GetAdmissibleTupleUnmixed}}
    \label{alg:getATunmixed}
    \begin{algorithmic}[1]
    \small
    	\Require{An unmixed system $\F$ satisfying Assumption \ref{ass:zerodim}, the polytope $P \ni 0$, the degrees $(d_1, \ldots, d_s)$.}
    	 \Ensure{An admissible tuple $(\f, A_0, (E_0,{\bm E}), D)$, a left nullspace matrix $\Cok(\f, (E_0,{\bm E}); D)$ and a corresponding matrix $N_{f_0}$}
    	 \State $d_0 \gets 1$
    	 \State $A_0 \gets P \cap \N^n$
    	 \State $f_0 \gets$ a random element of $R_{A_0}$
     \State $\lambda \gets \max_i d_i$
     \State $\Cok(\f,{\bm E}^{\lambda}; D^{\lambda}) \gets $ left nullspace of $\Mat(\f, {\bm E}^\lambda; D^\lambda)$, for the sets of exponents in \eqref{eq:expsunmixed}
     \State $r \gets$ number of rows of $\Cok(\f,{\bm E}^{\lambda}; D^{\lambda})$
     \State $N_{f_0} = \Cok(\f,{\bm E}^{\lambda}; D^{\lambda}) \cdot M(f_0, E_0^\lambda; D^\lambda)$
     \While{$\rank(N_{f_0}) \neq r$}
     \State $(\Cok(\f,{\bm E}^{\lambda}; D^{\lambda}) \times \id) \gets $ the matrix from \eqref{eq:prodmtx}
     \State $L^{\lambda + 1} \gets$ a left nullspace matrix for the matrix in \eqref{eq:smallermtx}
     \State $\Cok(\f,{\bm E}^{\lambda+1}; D^{\lambda+1}) \gets L^{\lambda + 1} \cdot (\Cok(\f,{\bm E}^{\lambda}; D^{\lambda}) \times \id)$
     \State $\lambda \gets \lambda +1$
      \State $r \gets$ number of rows of $\Cok(\f,{\bm E}^{\lambda}; D^{\lambda})$
      \State $N_{f_0} = \Cok(\f,{\bm E}^{\lambda}; D^{\lambda}) \cdot M(f_0, E_0^\lambda; D^\lambda)$
     \EndWhile
     \State {\bf return} $(\f, A_0, (E_0^\lambda,{\bm E}^\lambda), D^\lambda)$, $\Cok(\f,{\bm E}^{\lambda}; D^{\lambda})$, $N_{f_0}$
	\end{algorithmic}
      \end{algorithm}

      \begin{remark}[Other incremental constructions]
        There are alternative incremental constructions for the
        matrices $N_{f_0}$ which also reuse information from previous
        steps to speed up the computations. An example is the F5
        criterion in the context of \Groebner bases
        \cite{faugere2014sparse}. These ideas extend naturally to the mixed
        setting, see \cite{bender2019gr}. However, these
        approaches based on monomial orderings lead to bad numerical
        behaviour. 
        In the context of sparse resultants for mixed systems, Canny
        and Emiris \cite{emiris_efficient_1995} proposed an
        alternative incremental algorithm to construct admissible
        tuples leading to smaller Macaulay matrices. Their procedure
        can be enhanced with the approach followed in this section.
      \end{remark}

 In the rest of this subsection, we identify a broad class of overdetermined unmixed
 systems for which we can obtain smaller admissible tuples than the ones in
 \Cref{thm:overdeterminedSparse}. We will need some more notation. The \emph{Ehrhart series} of a polytope $P$ is the series
  \[
  \textup{ES}_P(t) = \sum_{\lambda \geq 0} \#((\lambda \cdot P) \cap
  \Z^n) \; t^\lambda.
  \]
  The \emph{Hilbert series} of a polynomial system
  $\F_0 := (f_0,\dots,f_s) \in R_{A_0} \times \dots\ \times R_{A_s}$,
 is
  \[
  \textup{HS}_{\F_0}(t) := \sum_{\lambda \geq 0}
  \HF(\F_0,\bm{E}^\lambda;D^\lambda) \; t^\lambda.
  \]

  \begin{definition}[Semi-regularity*] \label{deg:semireg}
    We say that $\F_0$ is a semi-regular* sequence if
    \[
      \textup{HS}_{\F_0}(t) =
  \left[ \textup{ES}_P(t) \, \prod_{i = 0}^s (1 - t^{d_i})
  \right]_+,
    \]
  where $[ \; \cdot \; ]_+$ means that we truncate the series in its
  first negative coefficient.
  \end{definition}

  Observe that we write semi-regular* sequence with an asterisk as the
  usual definition of \emph{semi-regular sequence} asks for this condition
  on the Hilbert series to hold for every subsystem $(f_0,\dots,f_i)$,
  $i \leq s$. However, semi-regular sequences are too restrictive for
  our purposes.
  
  Even in the case where $P$ is a standard
  simplex, semi-regular* sequences are not understood as well as regular
  sequences. For example, Fr\"oberg's conjecture states that being a
  semi-regular* sequence is a generic condition
  \cite{froberg_inequality_1985}. This conjecture, supported by a lot
  of empirical evidence, was extended to the unmixed case
  \cite{faugere2014sparse}.
  
  \begin{theorem}\label{thm:semireg}
    Consider an unmixed polynomial system
    $\F \in R_{A_1} \times \cdots \times R_{A_s}$ and a polynomial
    $f_0 \in R_{A_0}$, with $\conv(A_i) = d_i \cdot P$.
    Let $\lambda_{min}$ be the smallest integer among the degrees of the monomials in
    $\textup{ES}_P(t) \, \prod_{i = 0}^s (1 - t^{d_i})$ standing with a non-positive coefficient.
    We have that, if $(f_0,\F)$ is a semi-regular* sequence, then the
    tuple
    $((f_0,\F),A_0,(E_0^{\lambda_{min}},\bm{E}^{\lambda_{min}});D^{\lambda_{min}})$
    is admissible.
      \end{theorem}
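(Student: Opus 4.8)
The plan is to verify directly the three conditions of \Cref{def:condSolv} for the tuple attached to $\lambda = \lambda_{min}$, disposing of two of them exactly as in the proofs of \Cref{thm:overdeterminedSparse} and \Cref{thm:boundUnmixed}. The Compatibility condition $A_i + E_i^{\lambda_{min}} \subseteq D^{\lambda_{min}}$ for $i = 0, \dots, s$ follows from $\conv(A_i) = d_i\cdot P$, the formulas \eqref{eq:expsunmixed}, and the Minkowski-sum identity $(a\cdot P)+(b\cdot P) = (a+b)\cdot P$: when $\lambda_{min}\geq d_i$ one gets $A_i + E_i^{\lambda_{min}} \subseteq (d_i\cdot P + (\lambda_{min}-d_i)\cdot P)\cap\N^n = (\lambda_{min}\cdot P)\cap\N^n = D^{\lambda_{min}}$. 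The Lattice condition holds since $0\in A_0$ and $\Z A_0 = \Z^n$. Hence the only thing to prove — and, as \Cref{thm:boundUnmixed} already records, the only condition that is ever in question in this unmixed setting — is that the Rank condition holds at degree $\lambda_{min}$.

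For the Rank condition I would use the given polynomial $f_0\in R_{A_0}$ as the numerator. By \Cref{thm:propertiesOFh}, the equality $\rank(N_{f_0}) = \HF(\F,\bm{E}^{\lambda_{min}};D^{\lambda_{min}})$ is equivalent to
\[
  \HF\bigl((f_0,\F),(E_0^{\lambda_{min}},\bm{E}^{\lambda_{min}});D^{\lambda_{min}}\bigr) = 0,
\]
i.e. to the surjectivity onto $R_{D^{\lambda_{min}}}$ of the Sylvester map of the enlarged sequence $\F_0 = (f_0,f_1,\dots,f_s)$ in degree $\lambda_{min}$. By the definition of the Hilbert series in \Cref{deg:semireg}, the left-hand side is precisely the coefficient of $t^{\lambda_{min}}$ in $\textup{HS}_{\F_0}(t)$. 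Since $\F_0$ is semi-regular*, $\textup{HS}_{\F_0}(t) = \bigl[\,\textup{ES}_P(t)\prod_{i=0}^s(1-t^{d_i})\,\bigr]_+$, and by the definition of $\lambda_{min}$ the power series $\textup{ES}_P(t)\prod_{i=0}^s(1-t^{d_i})$ first acquires a non-positive coefficient in degree $\lambda_{min}$, so its truncation $[\,\cdot\,]_+$ has coefficient $0$ there. Therefore $\HF((f_0,\F),(E_0^{\lambda_{min}},\bm{E}^{\lambda_{min}});D^{\lambda_{min}}) = 0$, the Rank condition holds, and the tuple is admissible.

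The conceptual content is thin; the care goes into two routine checks. First, one must verify that $\lambda_{min}$ is finite and that $\lambda_{min}\geq\max_i d_i$, so that every $f_i$ genuinely contributes a nonempty block $E_i^{\lambda_{min}}$ to $\Mat(\F,\bm{E}^{\lambda_{min}};D^{\lambda_{min}})$ and the Minkowski-sum argument for Compatibility applies. This is where the overdetermined hypothesis $s>n$ is used: then $\textup{ES}_P(t)\prod_{i=0}^s(1-t^{d_i})$ is a genuine polynomial which does eventually have a non-positive coefficient, whereas for $s=n$ all its coefficients are positive and the statement is vacuous (consistent with semi-regularity* bringing no improvement over \Cref{thm:boundUnmixed} in that case). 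Second, one should make sure that the truncation convention of \Cref{deg:semireg} (cutting the series at its first negative coefficient) is compatible with $\lambda_{min}$ being the first degree carrying a non-positive coefficient, so that the degree-$\lambda_{min}$ coefficient of $[\,\cdot\,]_+$ really is $0$ — which it is, whether that coefficient is strictly negative or happens to vanish. I expect this bookkeeping, rather than any genuine difficulty, to be the only obstacle.
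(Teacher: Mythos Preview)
Your core argument is correct and is exactly the paper's proof: reduce the Rank condition, via \Cref{thm:propertiesOFh}, to $\HF((f_0,\F),(E_0^{\lambda_{min}},\bm{E}^{\lambda_{min}});D^{\lambda_{min}})=0$, and read this off from the semi-regular* hypothesis and the definition of $\lambda_{min}$; Compatibility and Lattice are handled as in \Cref{thm:boundUnmixed}. The paper's proof is the one-line version of this.

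Your closing ``bookkeeping'' paragraph, however, introduces claims that are unnecessary and in part false. You do \emph{not} need $\lambda_{min}\geq\max_i d_i$: when $\lambda_{min}<d_i$ the set $E_i^{\lambda_{min}}$ is empty by \eqref{eq:expsunmixed}, the Minkowski sum $A_i+\emptyset$ is empty, and Compatibility is vacuous for that index. In fact $\lambda_{min}\geq\max_i d_i$ can fail (take $n=1$, $P=\Delta_1$, $(d_0,d_1,d_2)=(1,1,100)$: the series is $(1-t)(1-t^{100})$, so $\lambda_{min}=1$). Likewise, the theorem carries no hypothesis $s>n$, and your assertion that for $s=n$ all coefficients of $\textup{ES}_P(t)\prod_{i=0}^s(1-t^{d_i})$ are positive is wrong (e.g.\ $P=\Delta_n$ and all $d_i=1$ gives the constant series $1$, whose degree-$1$ coefficient is $0$, hence non-positive). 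These remarks do not damage the proof itself, but they should be dropped.
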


  \begin{proof}
    The proof follows from the fact that
    $\HF((f_0,\F),E_0^{\lambda_{min}},\bm{E}^{\lambda_{min}});D^{\lambda_{min}})
    = 0$ as the sequence is semi-regular*.
  \end{proof}

  It follows directly from \Cref{thm:boundUnmixed} that, whenever
  $(f_0,\F)$ is semi-regular*,
  $\lambda_{min} \leq \sum_i d_i - \textsc{Codegree}(P) + 1$.
  In \Cref{sec:ex:over}, we present generic families of
  zero-dimensional overdetermined systems $\F$ such that $(f_0,\F)$ is
  semi-regular*. For these systems, we show that the previous
  inequality can be strict.
  
  Semi-regular* sequences give us an inexpensive heuristic to discover
  values for $\lambda$ for which we can obtain admissible tuples. It
  was observed in practice
  \cite{bardet2005asymptotic,faugere2014sparse} that for many systems
  $\F$ not having much solutions outside the torus (see
  \Cref{rmk:notMuchSolutionsAtInf}), they can be extended to
  semi-regular* sequences. Moreover, there are asymptotic estimates
  for the expected value of $\lambda$ \cite{bardet2005asymptotic}.

\section{Experiments} \label{sec:numexp}

In this section we illustrate several aspects of the methods presented
in this paper via numerical experiments. We implemented these
algorithms in the new Julia package \texttt{EigenvalueSolver.jl},
which is freely available at \link. For all computations
involving polytopes, we use \texttt{Polymake.jl} (version 0.5.3),
which is a Julia interface to Polymake \cite{kaluba2020polymake}. We
compare our results with the package \texttt{HomotopyContinuation.jl}
(version 2.3.1), which is state-of-the-art software for solving
systems of polynomial equations using homotopy continuation
\cite{breiding2018homotopycontinuation}.  All computations were run on
a 16 GB MacBook Pro with an Intel Core i7 processor working at 2.6
GHz.

To evaluate the quality of a numerical approximation $\z \in \C^n$ of
a solution for a polynomial system $\f$ given by \eqref{eq:notationf}.
We define the \emph{backward error} $\BWE(\z)$ of $\z$ as
\begin{equation} \label{eq:BWE}
 \BWE(\z) = \frac{1}{s} \sum_{i=1}^s \frac{|f_i(\z)|}{\sum_{\alpha \in A_i} | c_{i,\alpha} \z^\alpha | + 1} . 
 \end{equation}
This error can be interpreted as a measure for the relative distance
of $\F$ to a system $\F'$ for which $\F'(\z) = 0$, see
\cite[App.~C]{telen2020thesis}.

Additionally, we validate our computed solutions via
\emph{certification}. For that, we use the certification procedure
implemented in the function \texttt{certify} of
\texttt{HomotopyContinuation.jl}, which is based on interval
arithmetic, as described in \cite{breiding2020certifying}.
This function takes as an input a list of approximate solutions to
$\f$ and tries to compute a list of small boxes in $\C^n$, each of
them containing an approximate input solution and exactly one \emph{actual}
solution to $\f$. The total number of connected components in the union of these boxes is denoted by $\crt$ in what follows. Each of these connected components contains exactly one solution of $\f$, and one or more approximate input solutions. This means that $\crt$ is a lower bound on the number of solutions to $\f$. If $\crt$ equals the number of solutions, the solutions of $\f$ are in one-to-one correspondence with the approximate input solutions. In this case, we say that all solutions are \emph{certified}. The function \texttt{certify}
assumes that $\f$ is \emph{square}, i.e.~$\f$ should have as many
equations as variables ($s = n$). If this is not the case ($s > n$),
we use \texttt{certify} on a system obtained by taking $n$ random
$\C$-linear combinations of $f_1, \ldots, f_s$.

The main function of our package \texttt{EigenvalueSolver.jl} is
\texttt{solve\_EV}, which implements Algorithm \ref{alg:solve}. It
takes as an input an admissible tuple (see \Cref{def:condSolv}).
This tuple can be computed using the auxiliary functions provided in
our implementation, which are tailored to take into account the
specific structure of the systems. These functions use the explicit
and incremental constructions from Section \ref{sec:constr}.

It is common in applications that we have to solve many different
generic systems $\f$ with the same supports $A_1, \ldots, A_s$. In
this case, the computation of the admissible tuple can be seen as an
\emph{offline} computation that needs to happen only once. We will
therefore report both the \emph{offline} and the \emph{online} computation time. The offline computation time is the time needed for computing an admissible tuple \emph{and} executing \texttt{solve\_EV}. The \emph{online}
computation re-uses a previously computed admissible tuple to execute \texttt{solve\_EV}. 

\Cref{tab:notation} summarizes the notation that we use to describe
our experiments.

\begin{table}
\centering
\small
\begin{tabular}{l|l}
$n$ & number of variables \\
$\delta$ & number of solutions \\
$\crt$ & number of connected components computed by \texttt{certify} \\
$t_{\textup{on}}$ & online computation time in seconds \\
$t_{\textup{off}}$ & offline computation time in seconds  \\
$\BWE$ & maximum backward error of all computed approximate solutions \\
$\overline{\BWE}$ & geometric mean of the backward errors of all computed solutions \\
$\gamma$ & the number of rows of $\Cok(\f, {\bm E}; D)$ \\
$\# D$ & cardinality of $D$, i.e.~the number of columns of $\Cok(\f, {\bm E}; D)$
\end{tabular}
\caption{Notation in the experiments in Section \ref{sec:numexp}.}
\label{tab:notation}
\end{table}

The section is organized as follows. In \Cref{sec:ex:sqSys}, we
consider square systems $(s = n)$ and show how to use
\texttt{EigenvalueSolver.jl} to solve them. 
In \Cref{sec:ex:over}, we solve overdetermined systems
$(s > n)$ using our incremental algorithm. We
perform several experiments summarized in \Cref{tab:oddense} and \Cref{tab:odunmixed}.  In
\Cref{subsec:solatinf}, we consider systems for which one solutions drifts off to `infinity'. In \Cref{sec:ex:hom}, we
compare our algorithm with homotopy continuation methods.

\subsection{Square systems}
\label{sec:ex:sqSys}

In this subsection, we demonstrate some of the functionalities of
\texttt{EigenvalueSolver.jl} by solving square systems, that is
$s = n$, for each of the families in Table \ref{tbl:paramsTori}.
The code used for the examples can be found at \link \, in the Jupyter
notebook
\href{https://github.com/simontelen/JuliaEigenvalueSolver/blob/main/examples/demo_EigenvalueSolver.ipynb}{\texttt{/example/demo\_EigenvalueSolver.ipynb}}.
We fix the parameters of Table \ref{tbl:paramsTori} and consider
specific supports $A_i$ as described below. We construct random
polynomial systems by assigning random real coefficients to each of
the monomials, which we draw from a standard normal distribution. By
Remark \ref{rmk:dimCoKer}, the number $\gamma$ equals the number of
solutions $\delta$ for all examples in this subsection.

For our first example, we intersect two degree 20 curves in the
plane. That is, we consider a square, dense system $\f_1$ with $n = 2$
and $d_1 = d_2 = 20$. The equations are generated by the following
simple commands:
\begin{verbatim}
@polyvar x[1:2]; ds = [20;20];
f = EigenvalueSolver.getRandomSystem_dense(x, ds)
\end{verbatim}
By B\'ezout's theorem, this system has $\delta = 400$ different
solutions, which we can compute via
\begin{verbatim}
sol = EigenvalueSolver.solve_CI_dense(f, x; DBD = false)
\end{verbatim}
In the previous line, the option \verb|DBD = false| indicates that we
do not want to use the `degree-by-degree' approach for solving this
system, that is, the incremental approach described in \Cref{sec:incConstr}.
Experiments show that this strategy is only beneficial for square systems with $n \geq
3$. The letters \texttt{CI} in the name of the function stand for
\emph{complete intersection}, which indicates that a zero-dimensional
square system is expected as its input.
In this example, we have $\#D = 820$ and the computation took
$t_{\textup{off}} = 0.83$ seconds.  To validate the solutions, we
compute their backward errors.
\begin{verbatim}
BWEs = EigenvalueSolver.get_residual(f, sol, x)
\end{verbatim}
The maximal value, computed using the command
\texttt{maximum(BWEs)}, is $\BWE \approx 10^{-12}$. The function
\texttt{certify} from \texttt{HomotopyContinuation.jl} certifies
$\crt = 400$ distinct solutions.
If we perform the same computation with parameters $n = 3$,
$(d_1,d_2,d_3) = (4,8,12)$, we obtain
$ \delta = \gamma = \crt = 384,~ \#D = 2300,~ t_{\textup{off}} =
3.10,~ \BWE \approx 10^{-11}$.
\begin{verbatim}
@polyvar x[1:3]; ds = [4;8;12];
f = EigenvalueSolver.getRandomSystem_dense(x, ds)
sol = EigenvalueSolver.solve_CI_dense(f, x)
\end{verbatim}

For our next example, we consider an unmixed system $\f_2$ with
parameters
\begin{equation}
\label{eq:parametersunmixed}
n = 2, \quad P = \conv(A), \quad A =
\{(0,0),(1,0),(1,1),(0,1),(2,2) \}, \quad (d_1,d_2) = (5,12).
\end{equation} 
The following code executes this example,
\begin{verbatim}
@polyvar x[1:2]; A = [0 0; 1 0; 1 1; 0 1; 2 2]; d = [5;12]; 
f = EigenvalueSolver.getRandomSystem_unmixed(x, A, d)
sol, A0, E, D = EigenvalueSolver.solve_CI_unmixed(f, x, A, d)
\end{verbatim}
In this case, we obtain
$ \delta = \gamma = \crt = 240,~ \#D = 685,~ t_{\textup{off}} = 0.94,~
\BWE \approx 10^{-11}$.
We remark that the function \texttt{solve\_CI\_unmixed} also returns
the admissible tuple $(A_0, {\bm E}, D)$, so that it can be used to
solve another generic unmixed system with the same
parameters, without redoing the polyhedral computations to generate
this tuple. This can be done in the following way,
\begin{verbatim}
sol = EigenvalueSolver.solve_EV(f, x, A0, E, D; check_criterion = false)
\end{verbatim}
The option \texttt{check\_criterion = false} in the previous line
indicates that the input tuple is admissible, so we do not need to
spend time on checking whether the criterion in
\Cref{thm:propertiesOFh} is satisfied.
Using this option, the \emph{online} computation is faster and takes $t_{\textup{on}} = 0.41$ seconds, yet the parameters $\delta, \gamma, \crt, \BWE$ are comparable to the offline case. 
To illustrate how the unmixed function exploits the structure of the
equations, in Figure \ref{fig:densevsunmixed}, we plot the exponents
in $D$ for this example, together with the exponents in $D$ for our
dense example $\f_1$. In both plots, we have highlighted the exponents in the set $B$ that were selected using QR factorization with optimal column pivoting. These monomial bases clearly do not correspond to any standard (Gr\"obner) or border basis. Figure \ref{fig:densevsunmixed} should be compared to, for instance, Figure 2 in \cite{telen2018stabilized}.
\begin{figure}[h!]
\centering
\footnotesize
\definecolor{mycolorGreen}{rgb}{0.3,0.75,0.3}
\begin{tikzpicture} 

    \begin{axis}[%
    width=5cm,
    height=5cm,
    scale only axis,
    xmin= 0,
    xmax= 40,
    ymin= 0,
    ymax= 40,
    axis background/.style={fill=white},
    ]
    \addplot[only marks,mark=*,mark size=1.5pt,mycolor1!30!white
            ]  coordinates { 
(39,0) (38,1) (37,2) (36,3) (35,4) (34,5) (33,6) (32,7) (31,8) (30,9) (29,10) (28,11) (27,12) (26,13) (25,14) (24,15) (23,16) (22,17) (21,18) (20,19) (19,20) (18,21) (17,22) (16,23) (15,24) (14,25) (13,26) (12,27) (11,28) (10,29) (9,30) (8,31) (7,32) (6,33) (5,34) (4,35) (3,36) (2,37) (1,38) (0,39) (38,0) (37,1) (36,2) (35,3) (34,4) (33,5) (32,6) (31,7) (30,8) (29,9) (28,10) (27,11) (26,12) (25,13) (24,14) (23,15) (22,16) (21,17) (20,18) (19,19) (18,20) (17,21) (16,22) (15,23) (14,24) (13,25) (12,26) (11,27) (10,28) (9,29) (8,30) (7,31) (6,32) (5,33) (4,34) (3,35) (2,36) (1,37) (0,38) (37,0) (36,1) (35,2) (34,3) (33,4) (32,5) (31,6) (30,7) (29,8) (28,9) (27,10) (26,11) (25,12) (24,13) (23,14) (22,15) (21,16) (20,17) (19,18) (18,19) (17,20) (16,21) (15,22) (14,23) (13,24) (12,25) (11,26) (10,27) (9,28) (8,29) (7,30) (6,31) (5,32) (4,33) (3,34) (2,35) (1,36) (0,37) (36,0) (35,1) (34,2) (33,3) (32,4) (31,5) (30,6) (29,7) (28,8) (27,9) (26,10) (25,11) (24,12) (23,13) (22,14) (21,15) (20,16) (19,17) (18,18) (17,19) (16,20) (15,21) (14,22) (13,23) (12,24) (11,25) (10,26) (9,27) (8,28) (7,29) (6,30) (5,31) (4,32) (3,33) (2,34) (1,35) (0,36) (35,0) (34,1) (33,2) (32,3) (31,4) (30,5) (29,6) (28,7) (27,8) (26,9) (25,10) (24,11) (23,12) (22,13) (21,14) (20,15) (19,16) (18,17) (17,18) (16,19) (15,20) (14,21) (13,22) (12,23) (11,24) (10,25) (9,26) (8,27) (7,28) (6,29) (5,30) (4,31) (3,32) (2,33) (1,34) (0,35) (34,0) (33,1) (32,2) (31,3) (30,4) (29,5) (28,6) (27,7) (26,8) (25,9) (24,10) (23,11) (22,12) (21,13) (20,14) (19,15) (18,16) (17,17) (16,18) (15,19) (14,20) (13,21) (12,22) (11,23) (10,24) (9,25) (8,26) (7,27) (6,28) (5,29) (4,30) (3,31) (2,32) (1,33) (0,34) (33,0) (32,1) (31,2) (30,3) (29,4) (28,5) (27,6) (26,7) (25,8) (24,9) (23,10) (22,11) (21,12) (20,13) (19,14) (18,15) (17,16) (16,17) (15,18) (14,19) (13,20) (12,21) (11,22) (10,23) (9,24) (8,25) (7,26) (6,27) (5,28) (4,29) (3,30) (2,31) (1,32) (0,33) (32,0) (31,1) (30,2) (29,3) (28,4) (27,5) (26,6) (25,7) (24,8) (23,9) (22,10) (21,11) (20,12) (19,13) (18,14) (17,15) (16,16) (15,17) (14,18) (13,19) (12,20) (11,21) (10,22) (9,23) (8,24) (7,25) (6,26) (5,27) (4,28) (3,29) (2,30) (1,31) (0,32) (31,0) (30,1) (29,2) (28,3) (27,4) (26,5) (25,6) (24,7) (23,8) (22,9) (21,10) (20,11) (19,12) (18,13) (17,14) (16,15) (15,16) (14,17) (13,18) (12,19) (11,20) (10,21) (9,22) (8,23) (7,24) (6,25) (5,26) (4,27) (3,28) (2,29) (1,30) (0,31) (30,0) (29,1) (28,2) (27,3) (26,4) (25,5) (24,6) (23,7) (22,8) (21,9) (20,10) (19,11) (18,12) (17,13) (16,14) (15,15) (14,16) (13,17) (12,18) (11,19) (10,20) (9,21) (8,22) (7,23) (6,24) (5,25) (4,26) (3,27) (2,28) (1,29) (0,30) (29,0) (28,1) (27,2) (26,3) (25,4) (24,5) (23,6) (22,7) (21,8) (20,9) (19,10) (18,11) (17,12) (16,13) (15,14) (14,15) (13,16) (12,17) (11,18) (10,19) (9,20) (8,21) (7,22) (6,23) (5,24) (4,25) (3,26) (2,27) (1,28) (0,29) (28,0) (27,1) (26,2) (25,3) (24,4) (23,5) (22,6) (21,7) (20,8) (19,9) (18,10) (17,11) (16,12) (15,13) (14,14) (13,15) (12,16) (11,17) (10,18) (9,19) (8,20) (7,21) (6,22) (5,23) (4,24) (3,25) (2,26) (1,27) (0,28) (27,0) (26,1) (25,2) (24,3) (23,4) (22,5) (21,6) (20,7) (19,8) (18,9) (17,10) (16,11) (15,12) (14,13) (13,14) (12,15) (11,16) (10,17) (9,18) (8,19) (7,20) (6,21) (5,22) (4,23) (3,24) (2,25) (1,26) (0,27) (26,0) (25,1) (24,2) (23,3) (22,4) (21,5) (20,6) (19,7) (18,8) (17,9) (16,10) (15,11) (14,12) (13,13) (12,14) (11,15) (10,16) (9,17) (8,18) (7,19) (6,20) (5,21) (4,22) (3,23) (2,24) (1,25) (0,26) (25,0) (24,1) (23,2) (22,3) (21,4) (20,5) (19,6) (18,7) (17,8) (16,9) (15,10) (14,11) (13,12) (12,13) (11,14) (10,15) (9,16) (8,17) (7,18) (6,19) (5,20) (4,21) (3,22) (2,23) (1,24) (0,25) (24,0) (23,1) (22,2) (21,3) (20,4) (19,5) (18,6) (17,7) (16,8) (15,9) (14,10) (13,11) (12,12) (11,13) (10,14) (9,15) (8,16) (7,17) (6,18) (5,19) (4,20) (3,21) (2,22) (1,23) (0,24) (23,0) (22,1) (21,2) (20,3) (19,4) (18,5) (17,6) (16,7) (15,8) (14,9) (13,10) (12,11) (11,12) (10,13) (9,14) (8,15) (7,16) (6,17) (5,18) (4,19) (3,20) (2,21) (1,22) (0,23) (22,0) (21,1) (20,2) (19,3) (18,4) (17,5) (16,6) (15,7) (14,8) (13,9) (12,10) (11,11) (10,12) (9,13) (8,14) (7,15) (6,16) (5,17) (4,18) (3,19) (2,20) (1,21) (0,22) (21,0) (20,1) (19,2) (18,3) (17,4) (16,5) (15,6) (14,7) (13,8) (12,9) (11,10) (10,11) (9,12) (8,13) (7,14) (6,15) (5,16) (4,17) (3,18) (2,19) (1,20) (0,21) (20,0) (19,1) (18,2) (17,3) (16,4) (15,5) (14,6) (13,7) (12,8) (11,9) (10,10) (9,11) (8,12) (7,13) (6,14) (5,15) (4,16) (3,17) (2,18) (1,19) (0,20) (19,0) (18,1) (17,2) (16,3) (15,4) (14,5) (13,6) (12,7) (11,8) (10,9) (9,10) (8,11) (7,12) (6,13) (5,14) (4,15) (3,16) (2,17) (1,18) (0,19) (18,0) (17,1) (16,2) (15,3) (14,4) (13,5) (12,6) (11,7) (10,8) (9,9) (8,10) (7,11) (6,12) (5,13) (4,14) (3,15) (2,16) (1,17) (0,18) (17,0) (16,1) (15,2) (14,3) (13,4) (12,5) (11,6) (10,7) (9,8) (8,9) (7,10) (6,11) (5,12) (4,13) (3,14) (2,15) (1,16) (0,17) (16,0) (15,1) (14,2) (13,3) (12,4) (11,5) (10,6) (9,7) (8,8) (7,9) (6,10) (5,11) (4,12) (3,13) (2,14) (1,15) (0,16) (15,0) (14,1) (13,2) (12,3) (11,4) (10,5) (9,6) (8,7) (7,8) (6,9) (5,10) (4,11) (3,12) (2,13) (1,14) (0,15) (14,0) (13,1) (12,2) (11,3) (10,4) (9,5) (8,6) (7,7) (6,8) (5,9) (4,10) (3,11) (2,12) (1,13) (0,14) (13,0) (12,1) (11,2) (10,3) (9,4) (8,5) (7,6) (6,7) (5,8) (4,9) (3,10) (2,11) (1,12) (0,13) (12,0) (11,1) (10,2) (9,3) (8,4) (7,5) (6,6) (5,7) (4,8) (3,9) (2,10) (1,11) (0,12) (11,0) (10,1) (9,2) (8,3) (7,4) (6,5) (5,6) (4,7) (3,8) (2,9) (1,10) (0,11) (10,0) (9,1) (8,2) (7,3) (6,4) (5,5) (4,6) (3,7) (2,8) (1,9) (0,10) (9,0) (8,1) (7,2) (6,3) (5,4) (4,5) (3,6) (2,7) (1,8) (0,9) (8,0) (7,1) (6,2) (5,3) (4,4) (3,5) (2,6) (1,7) (0,8) (7,0) (6,1) (5,2) (4,3) (3,4) (2,5) (1,6) (0,7) (6,0) (5,1) (4,2) (3,3) (2,4) (1,5) (0,6) (5,0) (4,1) (3,2) (2,3) (1,4) (0,5) (4,0) (3,1) (2,2) (1,3) (0,4) (3,0) (2,1) (1,2) (0,3) (2,0) (1,1) (0,2) (1,0) (0,1) (0,0)  };

\addplot[only marks,mark=*,mark size=1.5pt,mycolor1!70!black
            ]  coordinates { 
(0,38) (0,0) (0,1) (2,36) (2,0) (0,2) (4,34) (3,0) (0,37) (0,36) (4,0) (0,3) (6,32) (0,35) (5,0) (38,0) (0,4) (8,30) (0,34) (0,5) (10,28) (0,6) (6,0) (2,35) (0,7) (2,2) (0,33) (0,8) (7,0) (0,32) (2,34) (36,2) (0,9) (13,25) (2,3) (4,33) (37,0) (2,33) (0,31) (8,0) (0,10) (36,0) (34,4) (15,23) (0,30) (1,0) (0,11) (35,0) (0,29) (32,6) (0,13) (6,31) (9,0) (2,4) (0,28) (30,8) (17,21) (0,26) (4,32) (0,12) (35,2) (1,2) (27,11) (2,5) (4,2) (2,32) (0,14) (0,27) (0,15) (34,0) (8,29) (0,16) (2,6) (4,31) (33,0) (33,4) (34,2) (25,13) (10,27) (2,7) (20,18) (0,23) (0,25) (4,3) (22,16) (2,8) (32,0) (6,30) (0,17) (10,0) (11,27) (2,31) (4,1) (12,25) (0,24) (31,0) (11,0) (33,2) (14,0) (2,9) (12,0) (13,0) (1,37) (14,23) (0,22) (32,4) (15,0) (4,4) (31,6) (9,27) (7,2) (1,1) (1,36) (29,9) (6,2) (0,18) (16,21) (5,33) (30,0) (0,20) (2,11) (2,30) (5,2) (1,3) (6,29) (1,11) (11,25) (37,1) (4,5) (32,2) (1,35) (4,30) (8,2) (31,4) (18,20) (29,0) (0,19) (31,2) (29,8) (7,29) (1,34) (1,4) (26,11) (4,6) (24,14) (0,21) (9,29) (30,2) (4,29) (28,0) (1,5) (16,0) (24,13) (4,7) (2,1) (2,29) (33,5) (3,9) (1,7) (3,35) (30,6) (1,13) (14,24) (26,0) (1,33) (36,1) (17,0) (5,32) (1,8) (3,1) (8,27) (2,28) (20,17) (1,14) (5,1) (1,6) (18,19) (7,31) (9,2) (28,7) (28,10) (2,27) (22,15) (5,4) (27,0) (35,3) (13,23) (1,15) (1,32) (1,9) (12,26) (18,0) (25,11) (6,4) (26,9) (3,33) (35,1) (3,34) (34,3) (1,26) (16,22) (28,9) (6,5) (7,30) (4,28) (26,12) (1,17) (7,1) (29,4) (27,9) (1,25) (3,2) (1,10) (1,27) (11,26) (24,0) (34,1) (6,1) (31,7) (16,20) (1,31) (1,23) (30,4) (5,31) (8,1) (3,11) (1,20) (1,12) (22,0) (1,18) (3,32) (21,17) (29,2) (23,13) (1,30) (6,28) (19,0) (24,11) (1,29) (8,5) (5,3) (29,6) (8,3) (7,5) (1,21) (9,28) (5,5) (9,1) (33,3) (3,28) (33,1) (30,7) (21,15) (23,15) (10,2) (1,16) (1,22) (3,31) (32,1) (3,3) (28,4) (19,19) (10,25) (1,19) (3,8) (11,2) (25,12) (22,13) (1,28) (3,12) (32,5) (31,1) (4,8) (3,30) (12,23) (3,4) (27,10) (32,3) (5,30) (5,8) (25,2) (7,26) (28,6) (13,24) (23,14) (3,6) (23,0) (3,10) (8,26) (9,5) (1,24) (6,6) (25,7) (31,5) (3,29) (10,26) (10,1) (29,7) (8,25) (6,8) (14,1) (6,7) (15,22) (7,3) (31,3) (7,28) (29,1) (27,8) (11,1) (26,2) (6,3) (8,28) (2,10) (5,28) (9,26) (30,3) (10,5) (29,3) (10,3) (30,1) (2,15) (5,29) (27,2) (5,6) (12,24) (2,16) (9,25) (21,16) (29,5) (25,10) (3,7) (30,5) (3,27) (28,8) (5,26) (12,2) (3,5) (25,1) (15,1) (15,21) (2,23) (28,1) (9,6) (2,22) (2,25) (26,10) (9,23) (24,12) (8,6) (2,12) (23,12) (17,20) (28,2) (11,24) (7,4) (21,0) (7,27) (2,26) (28,3) (13,3) (27,5) (28,5) (25,8) (21,14) (10,4) (14,2) (20,16) (27,3) (8,4) (2,21) (7,7) (4,27) (8,21) (5,27) (5,7) (18,1) (27,4) (13,1) (12,1) (24,1) (15,20) (12,4) (26,1) (4,22) (11,5) (5,9) (4,9) (3,23) (27,7) (2,24) (21,1) (25,0) (20,2) (3,21) (9,18) (23,1) (26,3) (15,3) (4,23) (11,7)  };
        
         \end{axis}
         \end{tikzpicture}%
\qquad
\begin{tikzpicture} 

    \begin{axis}[%
    width=5cm,
    height=5cm,
    scale only axis,
    xmin= 0,
    xmax= 40,
    ymin= 0,
    ymax= 40,
    axis background/.style={fill=white},
    ]
    \addplot[only marks,mark=*,mark size=1.5pt,mycolor1!30!white
            ]  coordinates { 
(0,0) (0,1) (0,2) (0,3) (0,4) (0,5) (0,6) (0,7) (0,8) (0,9) (0,10) (0,11) (0,12) (0,13) (0,14) (0,15) (0,16) (0,17) (0,18) (1,0) (1,1) (1,2) (1,3) (1,4) (1,5) (1,6) (1,7) (1,8) (1,9) (1,10) (1,11) (1,12) (1,13) (1,14) (1,15) (1,16) (1,17) (1,18) (2,0) (2,1) (2,2) (2,3) (2,4) (2,5) (2,6) (2,7) (2,8) (2,9) (2,10) (2,11) (2,12) (2,13) (2,14) (2,15) (2,16) (2,17) (2,18) (2,19) (3,0) (3,1) (3,2) (3,3) (3,4) (3,5) (3,6) (3,7) (3,8) (3,9) (3,10) (3,11) (3,12) (3,13) (3,14) (3,15) (3,16) (3,17) (3,18) (3,19) (4,0) (4,1) (4,2) (4,3) (4,4) (4,5) (4,6) (4,7) (4,8) (4,9) (4,10) (4,11) (4,12) (4,13) (4,14) (4,15) (4,16) (4,17) (4,18) (4,19) (4,20) (5,0) (5,1) (5,2) (5,3) (5,4) (5,5) (5,6) (5,7) (5,8) (5,9) (5,10) (5,11) (5,12) (5,13) (5,14) (5,15) (5,16) (5,17) (5,18) (5,19) (5,20) (6,0) (6,1) (6,2) (6,3) (6,4) (6,5) (6,6) (6,7) (6,8) (6,9) (6,10) (6,11) (6,12) (6,13) (6,14) (6,15) (6,16) (6,17) (6,18) (6,19) (6,20) (6,21) (7,0) (7,1) (7,2) (7,3) (7,4) (7,5) (7,6) (7,7) (7,8) (7,9) (7,10) (7,11) (7,12) (7,13) (7,14) (7,15) (7,16) (7,17) (7,18) (7,19) (7,20) (7,21) (8,0) (8,1) (8,2) (8,3) (8,4) (8,5) (8,6) (8,7) (8,8) (8,9) (8,10) (8,11) (8,12) (8,13) (8,14) (8,15) (8,16) (8,17) (8,18) (8,19) (8,20) (8,21) (8,22) (9,0) (9,1) (9,2) (9,3) (9,4) (9,5) (9,6) (9,7) (9,8) (9,9) (9,10) (9,11) (9,12) (9,13) (9,14) (9,15) (9,16) (9,17) (9,18) (9,19) (9,20) (9,21) (9,22) (10,0) (10,1) (10,2) (10,3) (10,4) (10,5) (10,6) (10,7) (10,8) (10,9) (10,10) (10,11) (10,12) (10,13) (10,14) (10,15) (10,16) (10,17) (10,18) (10,19) (10,20) (10,21) (10,22) (10,23) (11,0) (11,1) (11,2) (11,3) (11,4) (11,5) (11,6) (11,7) (11,8) (11,9) (11,10) (11,11) (11,12) (11,13) (11,14) (11,15) (11,16) (11,17) (11,18) (11,19) (11,20) (11,21) (11,22) (11,23) (12,0) (12,1) (12,2) (12,3) (12,4) (12,5) (12,6) (12,7) (12,8) (12,9) (12,10) (12,11) (12,12) (12,13) (12,14) (12,15) (12,16) (12,17) (12,18) (12,19) (12,20) (12,21) (12,22) (12,23) (12,24) (13,0) (13,1) (13,2) (13,3) (13,4) (13,5) (13,6) (13,7) (13,8) (13,9) (13,10) (13,11) (13,12) (13,13) (13,14) (13,15) (13,16) (13,17) (13,18) (13,19) (13,20) (13,21) (13,22) (13,23) (13,24) (14,0) (14,1) (14,2) (14,3) (14,4) (14,5) (14,6) (14,7) (14,8) (14,9) (14,10) (14,11) (14,12) (14,13) (14,14) (14,15) (14,16) (14,17) (14,18) (14,19) (14,20) (14,21) (14,22) (14,23) (14,24) (14,25) (15,0) (15,1) (15,2) (15,3) (15,4) (15,5) (15,6) (15,7) (15,8) (15,9) (15,10) (15,11) (15,12) (15,13) (15,14) (15,15) (15,16) (15,17) (15,18) (15,19) (15,20) (15,21) (15,22) (15,23) (15,24) (15,25) (16,0) (16,1) (16,2) (16,3) (16,4) (16,5) (16,6) (16,7) (16,8) (16,9) (16,10) (16,11) (16,12) (16,13) (16,14) (16,15) (16,16) (16,17) (16,18) (16,19) (16,20) (16,21) (16,22) (16,23) (16,24) (16,25) (16,26) (17,0) (17,1) (17,2) (17,3) (17,4) (17,5) (17,6) (17,7) (17,8) (17,9) (17,10) (17,11) (17,12) (17,13) (17,14) (17,15) (17,16) (17,17) (17,18) (17,19) (17,20) (17,21) (17,22) (17,23) (17,24) (17,25) (17,26) (18,0) (18,1) (18,2) (18,3) (18,4) (18,5) (18,6) (18,7) (18,8) (18,9) (18,10) (18,11) (18,12) (18,13) (18,14) (18,15) (18,16) (18,17) (18,18) (18,19) (18,20) (18,21) (18,22) (18,23) (18,24) (18,25) (18,26) (18,27) (19,2) (19,3) (19,4) (19,5) (19,6) (19,7) (19,8) (19,9) (19,10) (19,11) (19,12) (19,13) (19,14) (19,15) (19,16) (19,17) (19,18) (19,19) (19,20) (19,21) (19,22) (19,23) (19,24) (19,25) (19,26) (19,27) (20,4) (20,5) (20,6) (20,7) (20,8) (20,9) (20,10) (20,11) (20,12) (20,13) (20,14) (20,15) (20,16) (20,17) (20,18) (20,19) (20,20) (20,21) (20,22) (20,23) (20,24) (20,25) (20,26) (20,27) (20,28) (21,6) (21,7) (21,8) (21,9) (21,10) (21,11) (21,12) (21,13) (21,14) (21,15) (21,16) (21,17) (21,18) (21,19) (21,20) (21,21) (21,22) (21,23) (21,24) (21,25) (21,26) (21,27) (21,28) (22,8) (22,9) (22,10) (22,11) (22,12) (22,13) (22,14) (22,15) (22,16) (22,17) (22,18) (22,19) (22,20) (22,21) (22,22) (22,23) (22,24) (22,25) (22,26) (22,27) (22,28) (22,29) (23,10) (23,11) (23,12) (23,13) (23,14) (23,15) (23,16) (23,17) (23,18) (23,19) (23,20) (23,21) (23,22) (23,23) (23,24) (23,25) (23,26) (23,27) (23,28) (23,29) (24,12) (24,13) (24,14) (24,15) (24,16) (24,17) (24,18) (24,19) (24,20) (24,21) (24,22) (24,23) (24,24) (24,25) (24,26) (24,27) (24,28) (24,29) (24,30) (25,14) (25,15) (25,16) (25,17) (25,18) (25,19) (25,20) (25,21) (25,22) (25,23) (25,24) (25,25) (25,26) (25,27) (25,28) (25,29) (25,30) (26,16) (26,17) (26,18) (26,19) (26,20) (26,21) (26,22) (26,23) (26,24) (26,25) (26,26) (26,27) (26,28) (26,29) (26,30) (26,31) (27,18) (27,19) (27,20) (27,21) (27,22) (27,23) (27,24) (27,25) (27,26) (27,27) (27,28) (27,29) (27,30) (27,31) (28,20) (28,21) (28,22) (28,23) (28,24) (28,25) (28,26) (28,27) (28,28) (28,29) (28,30) (28,31) (28,32) (29,22) (29,23) (29,24) (29,25) (29,26) (29,27) (29,28) (29,29) (29,30) (29,31) (29,32) (30,24) (30,25) (30,26) (30,27) (30,28) (30,29) (30,30) (30,31) (30,32) (30,33) (31,26) (31,27) (31,28) (31,29) (31,30) (31,31) (31,32) (31,33) (32,28) (32,29) (32,30) (32,31) (32,32) (32,33) (32,34) (33,30) (33,31) (33,32) (33,33) (33,34) (34,32) (34,33) (34,34) (34,35) (35,34) (35,35) (36,36)  };

\addplot[only marks,mark=*,mark size=1.5pt,mycolor1!70!black
            ]  coordinates { 
(0,17) (17,0) (1,17) (34,34) (17,1) (4,19) (0,0) (1,0) (15,0) (5,19) (32,33) (2,0) (19,4) (8,21) (14,0) (0,1) (28,31) (3,0) (32,31) (5,0) (12,23) (4,0) (6,0) (32,32) (13,0) (30,32) (7,0) (17,2) (32,30) (11,0) (1,16) (19,5) (10,22) (8,0) (21,8) (10,0) (14,24) (16,0) (0,16) (23,12) (12,0) (26,30) (18,2) (28,22) (16,25) (22,10) (16,2) (0,2) (20,27) (15,2) (2,18) (27,20) (33,32) (31,28) (9,0) (24,14) (31,32) (1,15) (6,20) (18,26) (29,24) (30,26) (26,18) (20,6) (28,30) (19,6) (14,2) (33,33) (21,27) (4,18) (24,15) (1,1) (30,30) (22,11) (0,15) (0,3) (18,4) (0,4) (24,29) (27,30) (2,15) (31,30) (15,1) (29,30) (21,9) (11,22) (27,21) (3,18) (30,28) (18,3) (16,4) (0,14) (30,27) (0,7) (25,16) (24,16) (15,24) (12,22) (17,4) (22,12) (3,1) (16,1) (5,18) (26,19) (0,6) (12,1) (31,31) (14,3) (6,1) (27,22) (25,29) (3,2) (31,29) (18,25) (0,11) (22,28) (12,2) (20,10) (14,1) (22,27) (21,10) (4,1) (27,28) (2,17) (0,8) (23,14) (10,1) (8,20) (16,3) (29,28) (0,12) (15,5) (7,1) (2,1) (20,7) (28,23) (2,16) (0,5) (5,17) (8,1) (9,21) (18,5) (23,13) (30,31) (1,2) (10,2) (16,24) (0,10) (29,26) (29,29) (26,28) (30,29) (1,3) (17,3) (5,1) (29,25) (29,31) (4,2) (17,5) (13,1) (4,16) (0,9) (26,26) (25,22) (28,24) (3,17) (7,20) (24,17) (17,25) (25,19) (13,23) (8,2) (5,2) (21,11) (2,2) (1,4) (25,18) (4,17) (15,3) (20,8) (4,3) (0,13) (27,29) (13,2) (10,20) (25,17) (29,27) (11,1) (28,29) (11,21) (19,26) (10,3) (20,11) (7,2) (1,9) (20,12) (14,22) (15,4) (5,3) (15,23) (28,28) (6,19) (9,3) (6,2) (27,27) (2,14) (11,2) (26,23) (18,6) (19,24) (7,19) (2,11) (25,21) (26,27) (19,7) (21,12) (16,6) (26,29) (1,10) (17,24) (26,25) (28,27) (8,3) (25,20) (19,12) (15,22) (25,28) (9,20) (24,20) (4,4) (20,13) (1,14) (1,12) (19,25) (27,26) (23,15) (21,26) (2,8) (27,24) (13,22)  };
         \end{axis}
         \end{tikzpicture}%
\caption{Exponents in $D$, constructed as in Table \ref{tbl:paramsTori}, for the dense systems $\f_1$ (left) and the unmixed system $\f_2$ (right). The exponents are shown as lattice points. Dark coloured dots correspond to the set $\B \subset D$ chosen by the QR factorization in line \ref{line:qr} in Algorithm \ref{alg:solve}.
}
\label{fig:densevsunmixed}
\end{figure}
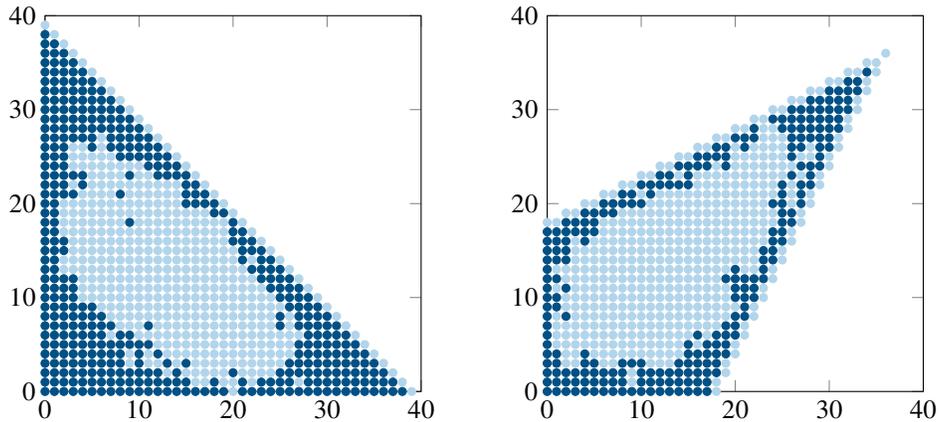

We can solve multi-graded dense and multi-unmixed systems using the implemented
functions \texttt{solve\_CI\_multi\_dense} and
\texttt{solve\_CI\_multi\_unmixed}, respectively.  Table
\ref{tab:mgmu} summarizes our choice of parameters and the results of
our experiments for these systems.
\begin{table}[h!]
\begin{footnotesize}
\begin{tabular}{c|c|c}
family & parameters & results \\ \hline
multi-graded, dense & $ n_1 = n_2 = 2, (d_{j,k}) = \begin{pmatrix}
1 & 6 \\ 2 & 1 \\ 3 &  2 \\ 4  & 1
\end{pmatrix}, $ & $\begin{matrix} \delta = \gamma = \crt = 219,~ \#D = 3025,\\ t_{\textup{off}} = 15.46,~ \BWE \approx 10^{-11} \end{matrix}$ \\ \hline
multi-unmixed & $ n_1 = n_2 = 2, (d_{j,k}) = \begin{pmatrix}
1 & 1 \\ 1 & 1 \\ 1 &  1 \\ 1 & 1
\end{pmatrix}, \begin{matrix} P_1 = \conv(A) \\ P_2 = 2 \cdot \Delta_2 \end{matrix} $ & $\begin{matrix} \delta = \gamma = \crt = 96,~ \#D = 2745,\\ t_{\textup{off}} = 12.84,~ t_{\textup{on}} = 11.92,~ \BWE \approx 10^{-9} \end{matrix}$ 
\end{tabular}
\end{footnotesize}
\caption{Computational data and results for multi-graded and multi-unmixed examples. Here $A$ is as in \eqref{eq:parametersunmixed}.}
\label{tab:mgmu}
\end{table}

To conclude this subsection, we present a classical example of a
square mixed system in $n = 3$ variables coming from molecular biology
\cite[Sec.~3.3]{emiris1999computer}. The following code generates and
solves these equations:
\begin{verbatim}
@polyvar t[1:3]
b = [-13 -1 -1 24 -1; -13 -1 -1 24 -1; -13 -1 -1 24 -1]
mons1 = [1 t[2]^2 t[3]^2 t[2]*t[3] t[2]^2*t[3]^2]
mons2 = [1 t[3]^2 t[1]^2 t[3]*t[1] t[3]^2*t[1]^2]
mons3 = [1 t[1]^2 t[2]^2 t[1]*t[2] t[1]^2*t[2]^2]
f = [b[1,:]'*mons1';b[2,:]'*mons2';b[3,:]'*mons3'][:]
sol, A0, E, D = EigenvalueSolver.solve_CI_mixed(f,t)
\end{verbatim}
In this case, we obtain
$\delta = \gamma = \crt = 16,~ \#D = 200,~ t_{\textup{off}} = 0.53,~
t_{\textup{on}} = 0.02,~ \BWE \approx 10^{-13}$. The function
\texttt{certify} tells us that all 16 solutions are real, confirming
the observation made in \cite{emiris1999computer}.

\subsection{Overdetermined systems}
\label{sec:ex:over}
We now consider examples of \emph{overdetermined systems}, by which we
mean cases where $s > n$. We will limit ourselves to unmixed systems
and use Algorithm \ref{alg:getATunmixed} to find admissible tuples
leading to small Macaulay matrices. These systems arise, for instance,
in tensor decomposition problems \cite{telen2021normal}. We present
examples where $\gamma$ is significantly larger than $\delta$ and show
that, nevertheless, our algorithms successfully extract
$\delta < \gamma$ relevant eigenvalues and consistently return all
solutions of the input systems.

We observe that the Macaulay matrices constructed in this section are
smaller than the ones obtained using other symbolic-numerical
techniques as (sparse) resultants \cite{emiris_complexity_1996} or its
generalization \cite{massri2016solving}.
The admissible tuples used in those symbolic-numerical algorithms lead to multiplication operators, for which $\gamma = \delta$. As observed
in \Cref{rmk:nsolsSpecialBound}, our matrices $M_g$ are too large to be multiplication operators. The extra time needed for computing the eigenvalues of these larger matrices is negligible compared to the time won by computing $M_g$ from a smaller Macaulay matrix.

The overdetermined systems considered in this section are constructed as
follows. For a fixed number of variables $n$, number of solutions $\D$
and set of exponents $A$, we generate $\D$ random points
$\z_1, \ldots, \z_\D$ in $\C^n$ by drawing their coordinates from a
complex standard normal distribution. We construct a Vandermonde type
matrix $\texttt{Vdm}$ whose rows consist
of the vectors $\z_i^A/\lVert \z_i^A \rVert_2 , i = 1, \ldots,
\D$. The nullspace of $\texttt{Vdm}$ is computed using SVD and its
columns represent $s = \#A - \D$ polynomials $f_1, \ldots, f_s$ with
support $A$. If we do not pick too many points, we have that $s > n$ and the
solutions of $\F = (f_1, \ldots, f_s)$ are exactly the points
$\z_1, \ldots, \z_\D$.

\subsubsection{Dense, overdetermined systems} \label{subsec:denseod}
In this subsection, we consider dense overdetermined systems,
i.e.~$A_0 = \Delta_n \cap \N^n$ and $A = (d \cdot \Delta_n) \cap \N^n$
for some degree $d \in \N_{>0}$. The \emph{offline} computation uses
Algorithm \ref{alg:getATunmixed} to find an admissible tuple, as well
as a left nullspace, and then execute Algorithm \ref{alg:solve} from
line \ref{line:qr} on. The \emph{online} computation uses this
admissible tuple to execute Algorithm \ref{alg:solve} directly. This
means that the offline version uses an incremental strategy for
computing the left nullspace, while the online version works directly
with the large Macaulay matrix. The online version can be adapted to
work incrementally as well. We have chosen not to do this in order to
illustrate that, depending on $n, s$, the incremental approach may be
less or more efficient than the direct approach. In cases where the
incremental approach is more efficient, this may cause
$t_{\text{off}} < t_{\text{on}}$. In the square case $(s = n)$, this
happens for $n \geq 3$
\cite{mourrain2019truncated,parkinson2021analysis}, but our results
show that in the overdetermined case this might not happen. Further
research is necessary to make an automated choice. Table
\ref{tab:oddense} gives an overview of the computational results. The column indexed by $\widetilde{\# D}$ represents the size of the matrix that would be used in classical approaches. This is discussed in the final paragraph of this subsection.

The first 10 rows in Table \ref{tab:oddense} correspond to systems of
6 equations in 3 variables of increasing degree
$d = 2, 4, \ldots, 20$. Note that $\gamma > \delta$ for $d > 4$. In
all cases, $\delta$ distinct solutions were computed using our
algorithms and $\crt = \delta$. This means that exactly $\delta$ out
of $\gamma$ eigenvalues were selected and correctly processed to
compute solution coordinates. The maximum backward error grows faster
with the degree of the equations than for square systems
\cite{telen2018stabilized}. This can be remedied by using larger
admissible tuples to bring $\gamma$ closer to $\delta$, at the cost of
computing cokernels of larger matrices. However, our experiment shows
that we can find certified approximations for all 1765 intersection
points of 6 threefolds of degree 20 within less than 10 minutes. All
of these are within two Newton refinement steps from having a backward
error of machine precision.

The next 5 rows of Table \ref{tab:oddense} contain results for 18
dense equations in 6 variables of increasing degree
$d = 2, 3, \ldots, 6$. Note that $t_{\textup{on}} > t_{\textup{off}}$
for $d >2$. This is due to the incremental approach for the offline
phase, as mentioned above.

In the following 7 rows of Table \ref{tab:oddense}, we illustrate the
effect of increasing the number of variables when we fix the degree
$d = 3$. We work with overdetermined systems for which $s =
2n$. Although the complexity of eigenvalue methods usually scales
badly with the number of variables, these results show that when the
system is `sufficiently overdetermined', our algorithms can find
feasible admissible tuples to solve cubic equations in 8 variables in
no more than 20 seconds.

Finally, the last rows of Table \ref{tab:oddense} correspond to
systems of cubic equations in 15 variables with an increasing number
$\delta = 200, 300, \ldots, 600$ of solutions. Note that the
computation time \emph{decreases} with the number of solutions. The
reason is that for all these values of $\delta$, we can work with the
same support $D$ for the Macaulay matrix. This means that the matrix
has the same number of rows for each system. The number of columns,
however, depends on the number of equations, which increases with
decreasing $\D$ by construction. For $\delta = 700$, we need a larger
set of exponents $D$, causing memory issues.
\begin{table}[h!]
\centering
\begin{footnotesize}
\begin{tabular}{cccccccccccc}
$n$ & $s$ & $d$ & $\D$ & $\crt$ & $\gamma$  & $\#D$  & $\widetilde{\#D}$ & $\BWE$ & $\overline{\BWE}$ & $t_{\textup{off}}$ & $t_{\textup{on}}$ \\ \hline
3 & 6 & 2 & 4 & 4 & 4 & 10 & 10  & 5.75e-16 & 3.20e-16 & 1.25e-03 & 1.34e-03 \\ 
3 & 6 & 4 & 29 & 29 & 29 & 84 & 84  & 1.70e-14 & 2.54e-15 & 9.41e-03 & 6.33e-03 \\ 
3 & 6 & 6 & 78 & 78 & 100 & 220 & 286  & 7.07e-12 & 2.23e-14 & 7.00e-02 & 5.23e-02 \\ 
3 & 6 & 8 & 159 & 159 & 224 & 560 & 816  & 1.21e-12 & 4.67e-14 & 4.47e-01 & 2.90e-01 \\ 
3 & 6 & 10 & 280 & 280 & 465 & 969 & 1540  & 6.32e-10 & 6.63e-13 & 1.99e+00 & 1.32e+00 \\ 
3 & 6 & 12 & 449 & 449 & 820 & 1540 & 2600 & 5.09e-09 & 7.90e-12 & 8.76e+00 & 6.04e+00 \\ 
3 & 6 & 14 & 674 & 674 & 1280 & 2600 & 4495  & 1.51e-08 & 7.78e-12 & 3.88e+01 & 2.21e+01 \\ 
3 & 6 & 16 & 963 & 963 & 1938 & 3654 & 6545  & 3.57e-07 & 3.98e-11 & 1.26e+02 & 7.36e+01 \\ 
3 & 6 & 18 & 1324 & 1324 & 2776 & 4960 & 9139  & 1.83e-06 & 5.77e-10 & 3.54e+02 & 2.08e+02 \\ 
3 & 6 & 20 & 1765 & 1765 & 3780 & 7140 & 12341  & 1.11e-05 & 9.96e-10 & 9.85e+02 & 5.38e+02 \\  \hline
6 & 18 & 2 & 10 & 10 & 10 & 84 & 84  & 1.53e-14 & 2.96e-15 & 1.45e-02 & 8.82e-03 \\ 
6 & 18 & 3 & 66 & 66 & 66 & 462 & 462 & 4.51e-14 & 5.59e-15 & 1.69e-01 & 1.74e-01 \\ 
6 & 18 & 4 & 192 & 192 & 204 & 1716 & 3003 & 2.95e-12 & 6.36e-14 & 2.11e+00 & 3.79e+00 \\ 
6 & 18 & 5 & 444 & 444 & 1225 & 5005 & 8008 & 7.52e-12 & 1.76e-13 & 5.18e+01 & 7.86e+01 \\ 
6 & 18 & 6 & 906 & 906 & 4060 & 12376 & 27132 & 5.28e-10 & 2.37e-12 & 1.01e+03 & 1.33e+03 \\  \hline
2 & 4 & 3 & 6 & 6 & 6 & 10 & 10  & 3.02e-15 & 1.12e-15 & 1.15e-03 & 1.23e-03 \\ 
3 & 6 & 3 & 14 & 14 & 14 & 35 & 35  & 5.95e-15 & 1.49e-15 & 2.92e-03 & 2.35e-03 \\ 
4 & 8 & 3 & 27 & 27 & 27 & 126 & 126  & 3.85e-14 & 2.27e-15 & 1.56e-02 & 1.27e-02 \\ 
5 & 10 & 3 & 46 & 46 & 46 & 252 & 252  & 6.59e-14 & 8.89e-15 & 4.19e-02 & 2.04e-01 \\ 
6 & 12 & 3 & 72 & 72 & 126 & 462 & 924 & 3.70e-12 & 1.46e-13 & 1.61e-01 & 1.51e-01 \\ 
7 & 14 & 3 & 106 & 106 & 127 & 1716 & 3432 & 6.20e-12 & 3.96e-14 & 2.29e+00 & 4.26e+00 \\ 
8 & 16 & 3 & 149 & 149 & 483 & 3003 & 6435 & 8.31e-12 & 1.05e-13 & 1.16e+01 & 1.92e+01 \\  \hline
15 & 616 & 3 & 200 & 200 & 200 & 3876 & 3876 & 1.45e-13 & 1.04e-14 & 9.78e+01 & 5.80e+01 \\ 
15 & 516 & 3 & 300 & 300 & 300 & 3876 & 3876 & 3.66e-13 & 9.37e-15 & 8.25e+01 & 5.64e+01 \\ 
15 & 416 & 3 & 400 & 400 & 400 & 3876 & 3876 & 5.46e-13 & 1.44e-14 & 8.50e+01 & 5.42e+01 \\ 
15 & 316 & 3 & 500 & 500 & 500 & 3876 & 3876 & 4.25e-13 & 1.26e-14 & 6.38e+01 & 5.81e+01 \\ 
15 & 216 & 3 & 600 & 600 & 600 & 3876 & 3876 & 4.86e-13 & 1.41e-14 & 4.91e+01 & 4.65e+01 \\  
\end{tabular}
\end{footnotesize}
\caption{Computational results for overdetermined, dense systems. See Table \ref{tab:notation} for the notation.}
\label{tab:oddense}
\end{table}

All systems $(f_0,\F)$ appearing in Table \ref{tab:oddense} are
semi-regular*. By \Cref{thm:semireg}, the minimal value of $\lambda$
such that $((f_0,\F),A_0,(E_0^\lambda,\bm{E}^\lambda);D^\lambda)$ is
an admissible tuple is the degree $\lambda_{min}$ of the lowest-degree monomial with a non-positive coefficient in
\[\textup{ES}_{\Delta_n}(t) \, \prod_{i = 0}^s (1 - t^{d_i})
  = \frac{(1 - t) \, (1 - t^{d})^s}{(1 - t^n)}.
\]
To illustrate the gain of using
such a minimal $\lambda_{min}$, we included the number $\widetilde{\# D}$ which corresponds to the number of monomials in the $D^\lambda$ for the smallest $\lambda$ which gives $\gamma = \delta$. That is, the smallest $\lambda$ for which the matrices $M_g$ in our algorithm represent multiplication matrices. For $n = 3, s = d = 6$, $\lambda_{min}$ is $9$, and the admissible tuple has $\#D = \#(9 \, \Delta_2 \cap \Z^2) = 220$ lattice points. Multiplication matrices are obtained from $\#D = \#(10 \, \Delta_2 \cap  \Z^2) = 286$. 
To see the benefit of our incremental construction over the bounds from Table \ref{tbl:paramsTori}, note that case 1 gives $\#D = \#(34 \, \Delta_2 \cap \Z^2) = 7770$, and the Minkowski sum of the Newton polytopes (Table \ref{tbl:paramsTori}, case 5) gives $\#D = \#(37 \, \Delta_2 \cap \Z^2) = 9880$.

\subsubsection{Unmixed, overdetermined systems}

We now use our algorithms to solve overdetermined unmixed systems. The
results are summarized in Table \ref{tab:odunmixed}. First, we set
$n = 3$ and choose $\delta$ such that $s = 6$. We define $A_0$ as the
columns of
\[\begin{pmatrix}
2 & 2 & 0 & 1 & 1 & 0 & 0 & 0 \\ 1 & 1 & 0 & 0 & 1 & 0 & 1 & 0 \\ 0 & 2 & 1 & 0 & 2 & 1 & 2 & 0
\end{pmatrix}.\]
The support $A$ is obtained as
$A = (d \cdot \conv(A_0)) \cap \N^3$ for increasing values of $d$. The
conclusions are similar to those for the $n = 3$ experiments in the
previous subsection. Note that $d = 8$ is the only reported case for which one solution could not be certified. 

Next, we set $n = 15, \delta = 100$ and we define $A_0 = \{ 0, e_1, e_2, \ldots, e_{13}, e_{13}+e_{14}, e_{14}+e_{15} \}$, where $e_i$ is the $i$-th standard basis vector of $\Z^{15}$. We set $A = (2 \cdot \conv(A_0)) \cap \N^{15}$. There are 136 exponents in $A$, of degree at most 4. 
\begin{table}[h!]
\centering
\begin{footnotesize}
\begin{tabular}{ccccccccccc}
$n$ & $s$ & $d$ & $\D$ & $\crt$ & $\gamma$  & $\#D$ & $\BWE$ & $\overline{\BWE}$ & $t_{\textup{off}}$ & $t_{\textup{on}}$ \\ \hline
3 & 6 & 1 & 3 & 3 & 3 & 33 & 8.91e-16 & 5.36e-16 & 1.25e+00 & 7.59e-01 \\ 
3 & 6 & 2 & 27 & 27 & 27 & 165 & 2.71e-13 & 1.99e-14 & 1.96e+00 & 2.30e-02 \\ 
3 & 6 & 3 & 76 & 76 & 93 & 291 & 3.94e-12 & 8.52e-14 & 2.07e+00 & 9.36e-02 \\ 
3 & 6 & 4 & 159 & 159 & 216 & 708 & 8.53e-11 & 6.62e-13 & 3.27e+00 & 5.06e-01 \\ 
3 & 6 & 5 & 285 & 285 & 415 & 1405 & 1.99e-08 & 6.42e-12 & 6.25e+00 & 2.78e+00 \\ 
3 & 6 & 6 & 463 & 463 & 891 & 1881 & 2.00e-06 & 6.15e-11 & 1.56e+01 & 1.06e+01 \\ 
3 & 6 & 7 & 702 & 702 & 1387 & 3133 & 4.56e-05 & 7.06e-10 & 5.66e+01 & 4.51e+01 \\ 
3 & 6 & 8 & 1011 & 1010 & 2031 & 4845 & 9.29e-05 & 3.78e-10 & 1.86e+02 & 1.61e+02 \\  
 \hline
15 & 36 & 2 & 100 & 100 & 100 & 3876 & 2.88e-13 & 7.42e-15 & 8.07e+01 & 4.59e+01 \\ 
\end{tabular}
\end{footnotesize}
\caption{Computational results for overdetermined, unmixed systems. See Table \ref{tab:notation} for the notation.}
\label{tab:odunmixed}
\end{table}

\begin{remark}[Noisy coefficients]
  As an important direction for future research, we note that our
  eigenvalue algorithms can be used to compute `solutions' to
  overdetermined systems with noisy coefficients. For instance, the noise
  level needs to be taken into account when setting the relative
  tolerance for computing the left nullspace in line
  \ref{line:leftnull} of Algorithm \ref{alg:solve}. This is expected
  to work especially well for strongly overdetermined problems with
  only a few solutions.
\end{remark}

\subsection{Solutions at infinity} \label{subsec:solatinf}
An important feature of our algorithms is that they can deal with systems having isolated solutions \emph{at or near infinity}. To illustrate this, we work with the same set-up as in \Cref{subsec:denseod} with parameters $n = 7, d = 3$ and $s = 14$, implying $\delta = 106$. We generate 106 random complex points $\z_1, \ldots, \z_{106}$ as before, and then multiply the coordinates of $\z_{106}$ by a factor $10^e$ for increasing values of $e$. That is, we let one of 106 solutions drift off to infinity. Figure \ref{fig:infty} shows the maximal 2-norm of the computed solutions as well as the maximal backward error $\BWE$ for $e = 0, \ldots, 14$. The results clearly show that the accuracy is not affected by the `outlier' solution. As $e$ grows larger, the solution $\z_{106}$ corresponds to an isolated solution of the face system $\f_v$ with $v = (1,1,1,1,1,1,1)$, see Remark \ref{rmk:notMuchSolutionsAtInf}. For all considered values of $e$, our algorithm computed $\crt = \delta = 106$ distinct certified approximate solutions.

\begin{figure}[h!]
\footnotesize
\centering
\begin{tikzpicture}[scale = 0.9]

\begin{axis}[%
width=2.7in,
height=1.5in,
at={(0.772in,0.516in)},
scale only axis,
xmin=0,
xmax=14,
xlabel style={font=\color{white!15!black}},
xlabel={$e$},
ymode=log,
ymin=1e-14,
ymax=1e14,
yminorticks=true,
axis background/.style={fill=white}
]
\addplot [color=mycolor1, mark size=1.7pt, mark=*, mark options={solid, mycolor1}, forget plot]
  table[row sep=crcr]{%
0 4.141848883188113\\ 
1 19.58378771773134\\ 
2 360.74121706676124\\ 
3 2389.404833045611\\ 
4 28612.31152861531\\ 
5 318898.85002400284\\ 
6 3.618214070944617e6\\ 
7 3.3256171976338834e7\\ 
8 1.503077475184459e8\\ 
9 2.229607768704204e9\\ 
10 2.9545620092458317e10\\ 
11 2.923873172089561e11\\ 
12 1.187963945370828e12\\ 
13 1.771289701688117e13\\ 
14 2.290136098700917e13\\ 
};  \label{bluenorm}
\addplot [color=mycolor2, mark size=1.7pt, mark=*, mark options={solid, mycolor2}, forget plot]
  table[row sep=crcr]{%
0 9.512004946362486e-13\\ 
1 2.062377868870719e-12\\ 
2 5.079580198512017e-13\\ 
3 1.8643810148205277e-11\\ 
4 2.4122171130350622e-12\\ 
5 1.2173912973085575e-12\\ 
6 2.5119294477054433e-12\\ 
7 9.436566000413367e-13\\ 
8 4.945510386086103e-12\\ 
9 7.08162336920883e-13\\ 
10 3.4047717039410067e-12\\ 
11 3.2784507609689837e-12\\ 
12 6.149766931760818e-13\\ 
13 2.643728580354804e-12\\ 
14 8.705858658980046e-12\\ 
}; \label{orangeres}
\end{axis}
\end{tikzpicture}%
\vspace{-0.7 \baselineskip}
\caption{Max.~backward error $\BWE$ (\ref{orangeres}) and norm of the largest solution (\ref{bluenorm}) for the experiments in~Sec.~\ref{subsec:solatinf}.}
\label{fig:infty}
\end{figure}
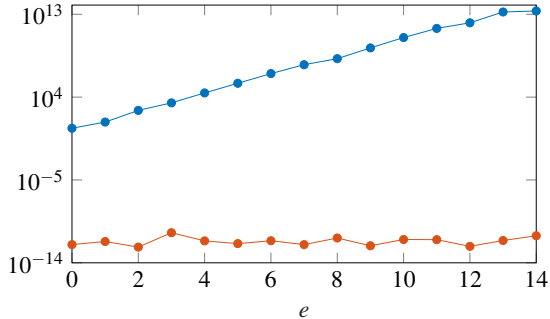

\subsection{Comparison with homotopy continuation methods}
\label{sec:ex:hom}
Homotopy continuation algorithms form another important class of numerical methods for solving polynomial systems \cite{sommese_numerical_2005}. These methods transform a \emph{start system} with known solutions continuously into the \emph{target system}, which is the system we want to solve, and track the solutions along the way. This process can usually only be set up for square systems, i.e.~$s = n$. In these cases, especially when $n = s$ is large $(\geq 4)$, homotopy continuation methods often outperform eigenvalue methods. When the system $\f$ is overdetermined ($s > n$), homotopy methods solve a square system $\f_{\text{square}}$ obtained by taking $n$ random $\C$-linear combinations of the $s$ input polynomials. The set of solutions of $\f$ is contained in the set of solutions of $\f_{\text{square}}$, so that the solutions of $\f$ can be extracted by an additional `filtering' step. Often $\f_{\text{square}}$ has \emph{many more} solutions than $\f$, so that many of the tracked paths do not end at a solution of $\f$. Below, we use the notation $\delta_{\text{square}}$ for the number of solutions of $\f_{\text{square}}$. 

Several implementations of homotopy methods exist, including Bertini \cite{bates2013numerically} and PHCpack \cite{verschelde1999algorithm}. Here, we choose to compare our computational results with the relatively recent Julia impementation \texttt{HomotopyContinuation.jl} \cite{breiding2018homotopycontinuation}. The motivation is twofold: it is implemented in the same programming language as \texttt{EigenvalueSolver.jl}, and it is considered the state of the art for the functionalities we are interested in. 
We point out that due to the extremely efficient implementation of
numerical path tracking in \texttt{HomotopyContinuation.jl}, the
package can outperform our eigenvalue solver even when
$\delta_{\text{square}}$ is significantly larger than $\delta$. For
instance, in the case $n = 3, d = 20$ from Table \ref{tab:oddense}, we
have $\delta_{\text{square}} = 8000 > \delta = 1765$, but
\texttt{HomotopyContinuation.jl} tracks all these 8000 paths in no
more than 40 seconds.  The performance is comparable for the row
$n = 6, d = 5$ in Table \ref{tab:oddense}, where
\texttt{HomotopyContinuation.jl} tracks
$\delta_{\text{square}} = 15625$ paths in about 45 seconds.
For all the above computations, we used the option
\texttt{start\_system = :total\_degree}, which is optimal for dense
systems and avoids polyhedral computations to generate start systems.

However, for strongly overdetermined systems, our algorithm outperforms
the homotopy approach. For example, for all the cases $n = 15, d = 3$,
Table \ref{tab:oddense} shows that our algorithms take no more than 2
minutes for $\delta \leq 600$. On the other hand, the number
$\delta_{\text{square}}$ equals $3^{15} = 14348907$, for which
\texttt{HomotopyContinuation.jl} shows an estimated duration of more
than 2 days.
Additionally, for the case $n = 15, d = 2$ in Table
\ref{tab:odunmixed}, we have $\delta_{\text{square}} = 32765$ and the
path tracking takes over 10 minutes, as compared to 48 seconds for the
online version of our algorithm and 65 seconds for the offline
version. In this last case we use the default \texttt{start\_system = :polyhedral}.

We conclude that for strongly overdetermined systems ($s \gg n$),
\texttt{EigenvalueSolver.jl} outperforms
\texttt{HomotopyContinuation.jl}, which suggests that eigenvalue methods
are more suitable to deal with this kind of systems.

\vspace{-.25\baselineskip}
{
\footnotesize
\paragraph{Acknowledgments}
Part of this work was done during the visit of the second author to TU
Berlin for the occasion of the MATH+ Thematic Einstein Semester on
Algebraic Geometry, Varieties, Polyhedra, Computation.
We thank the organizers of this nice semester for making this
collaboration possible.
We thank the anonymous reviewer for their useful comments
and constructive remarks. The first author was funded by the ERC under the European’s Horizon
2020 research and innovation programme (grant agreement No 787840). 
}

\vspace{-.75\baselineskip}
{
\footnotesize
\bibliographystyle{abbrv}
\setlength{\parskip}{0pt}
\bibliography{references}
}

\end{document}